\pgfplotsset{compat=1.17}
\newcommand{\enumlabelformat}{\roman}
\newcommand{\enumlabelfont}[1]{#1}
\newlength{\thelabelsep}
\setlist{labelsep=\thelabelsep}
\setlist[enumerate,1]{font=\enumlabelfont,label=(\enumlabelformat*),leftmargin=2.5em}
\setlist[itemize]{leftmargin=2.5em,label=$-$}
\newcounter{inlineenum}
\renewcommand{\theinlineenum}{\enumlabelformat{inlineenum}}
\numberwithin{equation}{section}
\theoremstyle{definition}
\newtheorem{dfn}{Definition}[section]
\newtheorem{thm}[dfn]{Theorem}
\newtheorem{pop}[dfn]{Proposition}
\newtheorem{lem}[dfn]{Lemma}
\newtheorem{cor}[dfn]{Corollary}
\newenvironment{introtheorem}[1]{%
\manualtheoreminner
}{\endmanualtheoreminner}
\theoremstyle{remark}
\newenvironment{acknowledgements}{%
\begin{abstract}
}{%
\end{abstract}
}
\DeclareMathOperator{\diam}{diam}
\newcommand{\diamfin}{\ensuremath{\diam_{\mathrm{fin}}}}
\let\epsilon\varepsilon 
\let\phi\varphi
\newcommand{\R}{\mathbb{R}}
\newcommand{\LLS}{Lo\-rentz\-ian length space }
\newcommand{\LLSn}{Lo\-rentz\-ian length space}
\newcommand{\LLSs}{Lo\-rentz\-ian length spaces }
\newcommand{\LpLS}{Lo\-rentz\-ian pre-length space }
\newcommand{\LpLSn}{Lo\-rentz\-ian pre-length space}
\newcommand{\LpLSs}{Lo\-rentz\-ian pre-length spaces }
\newcommand{\LpLSsn}{Lo\-rentz\-ian pre-length spaces}
\newcommand*{\bx}{\bar{x}}
\newcommand*{\by}{\bar{y}}
\newcommand*{\bz}{\bar{z}}
\newcommand*{\bp}{\bar{p}}
\newcommand*{\bq}{\bar{q}}
\newcommand*{\br}{\bar{r}}
\newcommand{\tp}{\tilde{p}}
\newcommand{\tq}{\tilde{q}}
\newcommand{\trr}{\tilde{r}}
\newcommand{\tx}{\tilde{x}}
\let\save@mathaccent\mathaccent
\newcommand*\if@single[3]{%
\setbox0\hbox{${\mathaccent"0362{#1}}^H$}%
\setbox2\hbox{${\mathaccent"0362{\kern0pt#1}}^H$}%
\ifdim\ht0=\ht2 #3\else #2\fi
}
\newcommand*\rel@kern[1]{\kern#1\dimexpr\macc@kerna}
\newcommand*\widebar[1]{\@ifnextchar^{{\wide@bar{#1}{0}}}{\wide@bar{#1}{1}}}
\newcommand*\wide@bar[2]{\if@single{#1}{\wide@bar@{#1}{#2}{1}}{\wide@bar@{#1}{#2}{2}}}
\newcommand*\wide@bar@[3]{%
\begingroup
\def\mathaccent##1##2{%
\let\mathaccent\save@mathaccent
\if#32 \let\macc@nucleus\first@char \fi
\setbox\z@\hbox{$\macc@style{\macc@nucleus}_{}$}%
\setbox\tw@\hbox{$\macc@style{\macc@nucleus}{}_{}$}%
\dimen@\wd\tw@
\advance\dimen@-\wd\z@
\divide\dimen@ 3
\@tempdima\wd\tw@
\advance\@tempdima-\scriptspace
\divide\@tempdima 10
\advance\dimen@-\@tempdima
\ifdim\dimen@>\z@ \dimen@0pt\fi
\rel@kern{0.6}\kern-\dimen@
\if#31
\overline{\rel@kern{-0.6}\kern\dimen@\macc@nucleus\rel@kern{0.4}\kern\dimen@}%
\advance\dimen@0.4\dimexpr\macc@kerna
\let\final@kern#2%
\ifdim\dimen@<\z@ \let\final@kern1\fi
\if\final@kern1 \kern-\dimen@\fi
\else
\overline{\rel@kern{-0.6}\kern\dimen@#1}%
\fi
}%
\macc@depth\@ne
\let\math@bgroup\@empty \let\math@egroup\macc@set@skewchar
\mathsurround\z@ \frozen@everymath{\mathgroup\macc@group\relax}%
\macc@set@skewchar\relax
\let\mathaccentV\macc@nested@a
\if#31
\macc@nested@a\relax111{#1}%
\else
\def\gobble@till@marker##1\endmarker{}%
\futurelet\first@char\gobble@till@marker#1\endmarker
\ifcat\noexpand\first@char A\else
\def\first@char{}%
\fi
\macc@nested@a\relax111{\first@char}%
\fi
\endgroup
}
\newcommand{\lm}[1]{\mathbb{L}^2(#1)}
\newcommand{\ma}{\ensuremath{\measuredangle}}
\newcommand{\mb}[1]{\mathbb{#1}}
\providecommand\given{} 
\newcommand\SetSymbol[1][]{
\nonscript\,#1\vert \allowbreak \nonscript\,\mathopen{}}
\DeclarePairedDelimiterX\Set[1]{\lbrace}{\rbrace}%
{ \renewcommand\given{\SetSymbol[\delimsize]} #1 }
\newcommand{\rr}{\mathbb{R}}
\title{ \vspace{-3.5em} A Toponogov globalisation result for Lorentzian length spaces \\[1ex]
{\normalsize \textit{Dedicated to the memory of Stephanie Alexander}}}
\author{Tobias Beran,\footnote{\href{mailto:tobias.beran@univie.ac.at}{tobias.beran@univie.ac.at}, Department of Mathematics, University of Vienna, Oskar-Morgenstern-Platz 1, 1090 Wien, Austria. \,\orcidlink{0000-0002-2813-0099}} \:
John Harvey,\footnote{\href{mailto:HarveyJ13@cardiff.ac.uk}{harveyj13@cardiff.ac.uk}, School of Mathematics, Cardiff University, Senghenydd Road, Cardiff, CF24 4AG, UK. \,\orcidlink{0000-0001-9211-0060}} \:
Lewis Napper,\footnote{\href{mailto:lewis.napper@surrey.ac.uk}{lewis.napper@surrey.ac.uk}, Department of Mathematics, University of Surrey, Stag Hill Campus, Guildford, GU2 7XH, UK. \,\orcidlink{0009-0001-6961-7055} } \:
Felix Rott\footnote{\href{mailto:frott@sissa.it}{frott@sissa.it}, SISSA, Via Bonomea 265, 34136 Trieste, Italy. \,\orcidlink{0000-0001-7314-0889}} \\ 
}
\begin{document}

\date{\today}

\maketitle
\thispagestyle{firstpage}
\vspace{-1.5em}

\begin{abstract}
In the synthetic geometric setting introduced by Kunzinger and S{\"a}mann, we present an analogue of Toponogov's Globalisation Theorem which applies to Lorentzian length spaces with lower (timelike) curvature bounds. Our approach utilises a ``cat's cradle'' construction akin to that which appears in several proofs in the metric setting. On the road to our main result, we also provide a lemma regarding the subdivision of triangles in spaces with a local lower curvature bound and a synthetic Lorentzian version of the Lebesgue Number Lemma. Several properties of time functions and the null distance on globally hyperbolic Lorentzian length spaces are also highlighted. We conclude by presenting several applications of our results, including versions of the Bonnet--Myers Theorem and the Splitting Theorem for Lorentzian length spaces with local lower curvature bounds, as well as discussion of stability of curvature bounds under Gromov--Hausdorff convergence.
\end{abstract}
\vspace{1em}

\begin{acknowledgements}
We want to thank James Grant, Stacey Harris and Didier Solis for valuable input in the early stages of this work. We also want to acknowledge the kind hospitality of the Erwin Schrödinger International Institute for Mathematics and Physics (ESI), where this project was initiated. 

This work was supported by research grant P33594 of the Austrian Science Fund FWF and by a UKRI Future Leaders Fellowship [grant number MR/W01176X/1].
This material is based in part upon work supported by the National Science Foundation under Grant No. DMS-1928930, while JH was in residence at the Simons Laufer Mathematical Sciences Institute in Berkeley, California, during Fall 2024.
\end{acknowledgements}

\newpage
\hypersetup{linkcolor=black}
\tableofcontents
\vfill\noindent
\emph{Keywords:} Lorentzian length spaces, synthetic curvature bounds, globalisation, Lorentzian geometry, null distance, time functions
\medskip

\noindent
\emph{MSC2020:} 53C50, 53C23, 53B30, 51K10, 53C80
\hypersetup{linkcolor={red!90!black}}
\newpage

\section{Introduction}\label{sec:intro}

Recall that a metric space $(X,d)$ is called a length (or intrinsic) space if its distance function $d(p,q)$ can be recovered as the infimum of the length of curves joining $p$ to $q$. This is the realm of so-called synthetic geometry, which can be seen as a generalisation of Riemannian geometry to spaces of lower regularity. Such spaces have proven an essential tool in the study of geometric flows \cite{AGS08,GN21}, optimal transport \cite{Stu06,Vil09}, and bounds on the number of finite subgroups of fundamental groups, see \cite[Corollary 9.3.2]{BBI01}. One notion that frequently arises in this setting is the concept of curvature bounds; a metric length space is said to have a lower (or upper) curvature bound if a given comparison condition\footnote{Several of these comparison conditions are on display in \cite[Theorem 8.30]{AKP19} and are shown to be equivalent for complete metric length spaces.} holds on a neighbourhood of each point $x\in X$, \cite{AKP19, BBI01}. These conditions are used to tame some of the more erratic behaviours of metric length spaces, so that they act more like their Riemannian counterparts, while not requiring smoothness.
\medskip

A vast amount of theory has been developed concerning spaces which exhibit global curvature bounds (where the comparison condition holds on the whole space) and their properties \cite{Gro78, Gro99}. The preservation of curvature bounds along sequences of spaces which converge in the Gromov--Hausdorff topology is a prime example \cite{BGP92,Kap02}.  As such, it is pertinent to ask when a space with a known local curvature bound also possesses a global one, that is, when does a curvature bound globalise? 

In the case of lower curvature bounds, this was first proven in two dimensions by Pizzetti \cite{Piz07} (see the history \cite{PZ11} for more details) and later independently re-proven by Alexandrov \cite{Ale51, Ale57}. These ``Toponogov Globalisation Theorems'' were popularised by Toponogov's proof for Riemannian manifolds in the late 1950s \cite{Top57, Top58, Top59}. Since then, Burago, Gromov and Perelman \cite{BGP92} and Plaut \cite{Pla91,Pla96} have extended the result to arbitrary complete metric length spaces, with refinements to their proofs being made by Alexander, Kapovitch, and Petrunin \cite{AKP19}, as well as Lang and Schroeder \cite{LS13}. A further generalisation regarding (not necessarily complete) geodesic spaces was also provided by Petrunin in \cite{Pet16}.
\medskip

Analogously to metric length spaces, in \cite{KS18}, Kunzinger and S{\"a}mann introduced the notion of a Lorentzian (pre-)length space, which facilitates the study of non-smooth Lorentzian geometry, with key applications in the investigation of spacetimes with low regularity metrics \cite{CG12, GKS19, GKSS20}, cones \cite{AGKS19}, and robust concepts of Gromov--Hausdorff convergence in the Lorentzian setting \cite{KS21, Mul23, MS23}. This synthetic Lorentzian picture also admits bounds on the so-called timelike curvature of a Lorentzian length space, via comparison conditions \cite{KS18,BMS22,BS22}. These timelike curvature bounds have been shown to behave like their metric counterparts in many circumstances and have hence been crucial for deriving Lorentzian equivalents to many metric results, including the Reshetnyak Gluing Theorem \cite{BR22, Rot22}, Splitting Theorem \cite{BORS23}, and a Bonnet--Myers style theorem for spaces with global lower timelike curvature bounds \cite{BNR23}. 
Timelike curvature bounds of Ricci type have also been defined and been shown to have good local-to-global properties \cite{Bra23, CM24}. 
\medskip

As for metric spaces, it is again pertinent to ask when a Lorentzian space with a local timelike curvature bound has a global one. In the smooth Lorentzian setting, the first result in this direction was achieved by Harris in \cite{Har82}, where a global comparison condition was inferred from lower timelike (sectional) curvature bounds. An Alexandrov's Patchwork approach was used by three of the present authors to answer this question for Lorentzian length spaces in the case of upper timelike curvature bounds in \cite{BNR23}, where globalisation results in the metric and Lorentzian settings were compared in detail. This paper is a continuation of that work and presents a solution for spaces with lower timelike curvature bounds, as well as several consequences of interest.
\medskip

The paper is organised as follows. We begin in Section \ref{sec:preliminaries} with a brief review of some basic yet crucial properties of Lorentzian (pre-)length spaces. We provide an overview of hyperbolic angles and how they may be used to describe curvature bounds, before discussing existence conditions for time functions and null distances with advantageous properties. The principal part of this paper is contained in Section \ref{sec:toponogov}, which begins with a series of supplementary results, including a Lorentzian analogue of the Lebesgue Number Lemma and a result concerning the splitting of triangles in Lorentzian length spaces with lower curvature bounds, in the spirit of the Gluing Lemma \cite{BR22}. We then proceed with a construction derived from the ``cat's cradle'' of Lang and Schroeder \cite{LS13} in the metric setting, with our main result being stated as follows:

\begin{introtheorem}{\ref{thm: LorentzianToponogov}}
Let $X$ be a connected, globally hyperbolic, regular Lorentzian length space with a time function $T$ and curvature bounded below by $K\in \mathbb{R}$ in the sense of angle comparison. 
Then each of the properties in Definition \ref{def-cb-ang} hold globally; in particular, the entire space $X$ is a $(\geq K)$-comparison neighbourhood and hence has curvature globally bounded below by $K$. 
\end{introtheorem}

This result globalises the notion of lower curvature bounds defined via ``angle comparison,'' as in Definition \ref{def-cb-ang}. Analogously to the metric setting, curvature bounds may also be characterised with respect to other comparison conditions, which can be shown to be equivalent (see \cite[Theorem 5.1]{BKR23} for a complete list of equivalent characterisations). Therefore, these conditions also exhibit the globalisation property, sometimes under additional assumptions which shall be discussed in Section \ref{sec:toponogov}.

In the manifold setting, Theorem \ref{thm: LorentzianToponogov} extends the work of Harris \cite{Har82} to globally hyperbolic Lipschitz spacetimes. 
This is due to \cite[Theorem 1.2]{GL18} by Graf and Ling, which shows that every strongly causal Lipschitz spacetime is a regular \LLSn, alongside the fact that a time function is guaranteed to exist on any second countable, globally hyperbolic \LLSn, see Proposition \ref{pop:exist-time-function}.
\medskip

We close this paper in Section \ref{sec: applicationsandoutlook} with an overview of some applications of our results. In particular, we extend the Lorentzian Bonnet--Myers Theorem \cite{BNR23} and Splitting Theorem \cite{BORS23} to Lorentzian length spaces with (local) lower curvature bounds, and show that lower curvature bounds are preserved under appropriate Lorentzian versions of Gromov--Hausdorff convergence (for example Minguzzi--Suhr convergence \cite{MS23}). Potential future results are also discussed.

\section{Preliminaries}\label{sec:preliminaries}

Over the course of the last half-decade, the theory of Lorentzian length spaces has gained immense traction, so much so that it is now a rather standard tool in the study of Lorentzian geometry. Consequently, in this section we only present material which is both critical for deriving our results and which also appears infrequently or disparately in the literature. In particular, we focus on the properties of hyperbolic angles \cite{BS22, BMS22}, time functions \cite{KS21}, and null distances \cite{SV16}. For more fundamental definitions, we refer the reader to \cite{KS18, BNR23}. 

\subsection{Notation and conventions}
\label{subsec: notationconventions}

Let us begin by reintroducing our main concepts and fixing our conventions. Recall that a \emph{Lorentzian pre-length space} $(X,d,\leq,\ll,\tau)$ consists of a metric space $(X,d)$ equipped with a causal relation $\leq$, timelike relation $\ll$, and time separation function $\tau$, cf.\ \cite[Definition 2.8]{KS18}. For brevity, we shall simply denote such spaces by their associated set $X$, where the additional structures can be identified from the context. A Lorentzian pre-length space which is additionally locally causally closed, causally path-connected, localisable, and whose time separation function takes the form
\begin{equation*}
\tau(x,y) = \sup \Set*{ L_\tau(\gamma) \given \gamma \textrm{ future-directed, causal curve from } x \textrm{ to } y}\, ,
\end{equation*}
for $x,y\in X$ with a future-directed causal curve between them and $\tau(x,y)=0$ otherwise, is called a \emph{Lorentzian length space}, cf. \cite[Definition 3.22]{KS18}.
\medskip

Unless explicitly stated otherwise, causal curves are assumed to be future-directed. Furthermore, we use the term \emph{distance realiser} to refer to any causal curve in a \LpLSn, cf.\ \cite[Definition 2.24]{KS18}, whose $\tau$-length attains the $\tau$-distance between its endpoints, i.e. a causal curve $\gamma$ from $x$ to $y$, such that $L_{\tau}(\gamma)=\tau(x,y)$. 
\medskip 

We inherit from earlier works the notion of the causal past/future of a point $x\in X$, which we denote by $J^\pm(x)$. The analogous timelike past/future is denoted $I^\pm(x)$. Causal and timelike diamonds with defining points $x,y\in X$ are respectively denoted by $J(x,y):=J^+(x) \cap J^-(y)$ and $I(x,y):=I^+(x) \cap I^-(y)$. Recall that a \LpLS is \emph{globally hyperbolic} if all causal diamonds $J(x,y)\subseteq X$ are compact and $X$ is non-totally imprisoning, cf\ \cite[Definition 2.35\,(iii)]{KS18}.
\medskip

We now wish to address the concept of regularity, one of the defining properties of a regularly localisable \LpLSn, cf.\ \cite[Definition 3.16]{KS18} and a natural condition to impose on a \LpLS in its own right. This property is also crucial for defining timelike curvature bounds via angle comparison.

\begin{dfn}[Regularity] \label{def: regular}
Let $X$ be a Lorentzian (pre-)length space. $X$ is called \emph{regular} if any distance realiser between timelike related points is timelike, i.e. it cannot contain a null piece.
\end{dfn}
It is worth observing that under strong causality, the notion of being regularly localisable is equivalent to being regular (in the sense of Definition \ref{def: regular}) and localisable, see \cite[Lemma 3.6]{BKR23}.

\subsection{Hyperbolic angles and curvature bounds}
\label{subsec:LLS:angles}
Hyperbolic angles in Lorentzian pre-length spaces were introduced in \cite{BS22} and \cite{BMS22}, where the latter puts a greater focus on comparison results. Throughout this section, we follow the conventions of the former reference.

First recall that the \emph{finite diameter} of a Lorentzian pre-length space is given by the supremum of (finite) $\tau$-values on the space. Denote by $\lm{K}$ the \emph{Lorentzian model space} of constant curvature $K$ and its finite diameter by $D_K$, cf.\ \cite[Definition 1.11]{BS22}. Similarly to the metric case, we have
\begin{equation*}
        D_K=\diamfin(\lm{K})= 
        \begin{cases}
            \infty, & \text{ if } K \geq 0 \, , \\
            \frac{\pi}{\sqrt{-K}}, & \text{ if } K < 0 \, .
        \end{cases}
\end{equation*} 
Furthermore, in a Lorentzian pre-length space, triples of points $(p,q,r)$ with $\tau(p,r)<\infty$, either $p\ll q\leq r$ or $p \leq q \ll r$, and (non-trivial) time-separations realised by distance realisers, will be called \emph{admissible causal triangles}. They shall be denoted by $\Delta(p,q,r)$, where the points are written according to their causal order unless otherwise stated, with each side being labelled either by the name of an associated distance realiser or, if the specific choice of distance realiser or parametrisation thereof is unimportant, by the closed interval between the endpoints, i.e. $[p,q]$ is a distance realiser from $p$ to $q$. If we additionally have  $p \ll q \ll r$, the triple is called a \emph{timelike triangle}, cf.\ \cite[Lemma 4.4]{KS18}. Throughout the remainder of this paper, we tacitly assume that any such triangles satisfy appropriate size bounds, cf.\ \cite[Lemma 4.6]{KS18}, that is, $\tau(p,r)<D_K$.

\begin{dfn}[Comparison angles]
Let $K\in \mathbb{R}$ and let $X$ be a \LpLSn. Let $x_1 \leq x_2 \leq x_3$ be a triple of causally related points in $X$, satisfying size bounds for $K$, cf.\ \cite[Lemma 4.6]{KS18} and let $\Delta(\bx_1, \bx_2, \bx_3)$ be a comparison triangle\footnote{Recall that a triple of causally related points has a comparison triangle in the model space $\lm{K}$ if the side-lengths satisfy size bounds with respect to $K$, cf.\ \cite[Definition 4.14]{KS18}. This does not require the points to be timelike related, nor that curves between the points exist.} in $\lm{K}$ for $(x_1,x_2,x_3)$. Fix distinct indices $i,j,k \in \{ 1,2,3 \}$ and assume that $x_i$ is timelike related to both $x_j$ and $x_k$ in some way. 
\begin{enumerate}
    \item The (unsigned) \emph{comparison angle} at $x_i$ is  
        \begin{equation*}
            \tilde\ma_{x_i}^K(x_j,x_k) \coloneqq \ma_{\bx_i}^{\lm{K}}(\bx_j,\bx_k)\,,
        \end{equation*}
        where $\ma_{\bx_i}^{\lm{K}}(\bx_j,\bx_k)$ is the hyperbolic angle at $\bx_i$ in $\Delta(\bx_1,\bx_2,\bx_3)\subseteq \lm{K}$.  
    \item The \emph{sign} $\sigma$ of the comparison angle at $x_i$ is defined to be $\sigma = 1$ if $i=2$, i.e. $x_i$ is not a time-endpoint, and $\sigma = -1$ if $i=1$ or $3$, i.e. $x_i$ is a time-endpoint.
    \item The \emph{signed comparison angle} at $x_i$ is then defined by 
        \begin{equation*}
            \tilde{\ma}_{x_i}^{\mathrm{S},K}(x_j,x_k)=\sigma\tilde{\ma}_{x_i}^{K}(x_j,x_k)\,,
        \end{equation*}
        where $\tilde{\ma}_{x_i}^K(x_j,x_k)>0$.
\end{enumerate}
\end{dfn}

The hyperbolic angle $\ma_{\bx_i}^{\lm{K}}(\bx_j,\bx_k)$ at $\bx_i$ in the Lorentzian model space $\lm{K}$ can be calculated using the law of cosines, cf.\ \cite[Lemma 2.3]{BS22}. For convenience, we reiterate this result here.

\begin{lem}[Law of cosines]
    Let $K \in \mathbb{R}$. Let $\bx_1\leq \bx_2 \leq \bx_3$ be a triple of causally related points in $\lm{K}$ forming a finite causal triangle. Fix distinct indices $i,j,k \in \{ 1,2,3 \}$ and assume that $\bx_i$ is timelike related to both $\bx_j$ and $\bx_k$ in some order. Denote the hyperbolic angle at $\bx_i$ by $\omega = \ma_{\bx_i}^{\lm{K}}(\bx_j,\bx_k)$, its sign by $\sigma$, and the scale factor by $s = \sqrt{|K|}$. Finally, set $a = \max \{ \tau(\bx_i,\bx_j),\tau(\bx_j,\bx_i) \}$, $b = \max \{ \tau(\bx_i,\bx_k),\tau(\bx_k,\bx_i)\}$, and $c = \max \{ \tau(\bx_j,\bx_k),\tau(\bx_k,\bx_j) \}$, noting that $a,b >0$ and $c \geq 0$. Then we have
    \begin{align*}
        &a^2 + b^2 = c^2 - 2ab\sigma \cosh(\omega) && \textrm{ for } K=0\,, \\
        &\cos(sc) = \cos(sa)\cos(sb) - \sigma \cosh(\omega) \sin(sa)\sin(sb) && \textrm{ for } K < 0\,, \\
        &\cosh(sc) = \cosh(sa)\cosh)sb) + \sigma \cosh(\omega)\sinh(sa)\sinh(sb) && \textrm{ for } K>0\,.
    \end{align*}
\end{lem}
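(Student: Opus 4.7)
The plan is a direct computation in each of the three model spaces, parallel to the classical derivations of the spherical and hyperbolic laws of cosines; the result is a restatement of \cite[Lemma 2.3]{BS22}, which I would simply re-derive. The unifying observation is that for two future-directed (equivalently, two past-directed) timelike unit vectors $u, v$ based at a common point, the ambient Lorentzian inner product computes the hyperbolic angle via $\langle u,v\rangle = -\cosh\omega$, whereas for one future-directed and one past-directed unit vector the formula becomes $\langle u,v\rangle = +\cosh\omega$. The sign $\sigma$ in the statement bookkeeps precisely this dichotomy: at an endpoint of the causal chain ($i=1$ or $3$) both edges leave $\bx_i$ into the same temporal cone, yielding $\sigma=-1$, while at the middle vertex ($i=2$) one edge enters from the past and the other departs to the future, yielding $\sigma=+1$.

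For $K=0$, work in $\mathbb{R}^{1,1}$ directly. The identity $\langle \bx_j-\bx_i,\, \bx_k-\bx_i\rangle = ab\sigma\cosh\omega$ is immediate from the dichotomy above. Combined with $\langle \bx_j-\bx_i,\bx_j-\bx_i\rangle = -a^2$ and the analogous relation for $b$, expand
\[
c^2 = -\langle \bx_j-\bx_k,\,\bx_j-\bx_k\rangle = a^2 + b^2 + 2\langle \bx_j-\bx_i,\,\bx_k-\bx_i\rangle = a^2+b^2+2ab\sigma\cosh\omega,
\]
which rearranges to the stated identity.

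For $K\neq 0$, realise $\mathbb{L}^2(K)$ as the standard quadric in a three-dimensional pseudo-Euclidean ambient space. Under this embedding, the ambient inner product of two timelike-related points on the quadric is a simple trigonometric function of $s\tau$ (cosine for $K<0$, hyperbolic cosine for $K>0$), and the hyperbolic angle between tangent vectors at a common point coincides with the value obtained from the ambient inner product. Adopt an ambient orthonormal frame at $\bx_i$ aligned so that the position vector of $\bx_i$ is one basis vector and a second basis vector is tangent to the geodesic realising side $a$; parametrise $\bx_j$ and $\bx_k$ via the exponential map at $\bx_i$, so that the initial velocities are related by the hyperbolic angle $\omega$ and sit in the temporal cones dictated by $\sigma$. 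Expanding $\langle \bx_j, \bx_k\rangle$ in this frame via the trigonometric addition formulae then produces the remaining two identities.

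The only real hurdle is sign bookkeeping---correctly identifying when the initial tangent to an edge must be past-directed (which happens exactly when $\bx_i$ is a time-endpoint) and propagating that sign through to the coefficient of $\cosh\omega$. Once this is handled, each case reduces to a routine trigonometric identity.
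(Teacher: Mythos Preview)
Your derivation is correct. Note, however, that the paper does not actually prove this lemma: it is introduced with ``For convenience, we reiterate this result here'' and is simply a restatement of \cite[Lemma 2.3]{BS22}, with no proof supplied. So there is nothing to compare against beyond the citation; your sketch supplies an independent direct argument where the paper only quotes the literature.
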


An important consequence of the law of cosines is the following property for unsigned angles, which will be used extensively throughout this work. 

\begin{cor}[Law of cosines monotonicity]\label{cor:LOC Mono}
Let $K\in\mb{R}$ and consider any timelike triangle in the Lorentzian model space $\lm{K}$. 
Then fixing the two short side lengths and varying the longest, any angle is monotonically increasing. Fixing one short side and the longest side length and varying the other short side, any angle is monotonically decreasing.
\end{cor}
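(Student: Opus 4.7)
The plan is straightforward differentiation: invoke the law of cosines at each vertex to express $\cosh(\omega)$ explicitly in terms of the three side lengths $a,b,c$, then verify that the partial derivative with respect to the varying side has the asserted sign at all three vertices. Since $\cosh$ is strictly increasing on $[0,\infty)$, this transfers directly to monotonicity of the angles themselves. To fix notation, label the timelike triangle $\Delta(p,q,r)$ with $p \ll q \ll r$ and set $a = \tau(p,q)$, $b = \tau(q,r)$, $c = \tau(p,r)$; by the reverse triangle inequality in $\lm{K}$ one has $c \geq a+b$, so $c$ is unambiguously the longest side and every side length is strictly positive.

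In the flat case $K=0$ the three quantities $\cosh(\omega_p)$, $\cosh(\omega_q)$, $\cosh(\omega_r)$ are quadratic rational functions of $a,b,c$, and each of the six relevant partial derivatives reduces to an elementary expression whose sign is pinned down by one of $c > a$, $c > b$, or $c \geq a+b$. For instance, fixing $a,b$ and varying $c$ yields $\partial_c \cosh(\omega_q) = c/(ab) > 0$ and $\partial_c \cosh(\omega_p) = (c^2+b^2-a^2)/(2ac^2) > 0$ via $c > a$; the four remaining checks across both assertions follow the same pattern. The only place where the full bound $c \geq a+b$ is required, rather than merely $c>a$ or $c>b$, is in verifying $\partial_b \cosh(\omega_r) < 0$, where $c^2 \geq (a+b)^2 > a^2+b^2$ does the job.

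The cases $K \neq 0$ follow the same template, using the trigonometric ($K<0$) or hyperbolic ($K>0$) form of the law of cosines. The main obstacle is the sign analysis of the resulting trigonometric products, because the rescaled arguments $sa, sb, sc$ are not guaranteed to all lie in $(0, \pi/2)$ in the $K<0$ case. I would address this by back-substituting the law of cosines at the \emph{opposite} vertex into the derivative, which collapses the expression to one whose sign is once again controlled purely by the reverse triangle inequality in $\lm{K}$ together with the non-negativity of the remaining trigonometric factors. With this algebraic device the argument proceeds exactly as in the flat case, and no further input is required.
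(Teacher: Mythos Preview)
Your approach is correct, and there is nothing to compare it against: the paper states this corollary without proof, treating it as an immediate consequence of the law of cosines. Your differentiation argument is exactly the routine verification the paper leaves implicit, and the back-substitution device you describe for $K\neq 0$ does work---for instance, at the vertex $p$ one finds $\partial_c \cosh(\omega_p)$ proportional to $\cos(sa)-\cos(sb)\cos(sc)$, which the law of cosines at $r$ rewrites as $\cosh(\omega_r)\sin(sb)\sin(sc)>0$, and the remaining cases follow the same pattern.
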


Both upper angles and angles between timelike curves in a Lorentzian pre-length space may now be defined via the comparison angle introduced above.

\begin{dfn}[Angles]
Let $X$ be a \LpLS and $\alpha,\beta:[0,\varepsilon)\to X$ be two timelike curves (where we permit one or both of the curves to be past-directed) with $x:=\alpha(0)=\beta(0)$. Then we define the \emph{upper angle} 
\begin{equation*}
\ma_x(\alpha,\beta)=\limsup_{\substack{(s,t)\in D \\ s,t\to 0}}\tilde{\ma}_x^{K}(\alpha(s),\beta(t))\,,
\end{equation*}
where 
\begin{align*}
D &= \Set*{ (s,t) \given s,t>0, \, \alpha(s),\beta(t) \text{ timelike related} } \\
& \qquad \cap \Set*{ (s,t) \given \alpha(s),\beta(t), x \text{ satisfies size bounds for $K$}}\,.
\end{align*}
If the limit superior is in fact a limit and is finite, we say the angle exists and call $\ma_x(\alpha,\beta)$ an \emph{angle}.

Observe that the sign $\sigma$ of the comparison angle is independent of $(s,t)\in D$. Therefore, the \emph{sign} of the (upper) angle is also defined to be precisely $\sigma$. The \emph{signed (upper) angle} is then defined as $\ma_x^{\mathrm{S}}(\alpha,\beta)=\sigma \ma_x(\alpha,\beta)$.
\end{dfn} 

The following proposition provides sufficient conditions for adjacent angles taken at a point along a distance realiser to be equal. This property is similar to the metric notion of a segment being balanced, cf.\ \cite[Lemma 1.3]{LS13}, and, as such, it will be crucial in constructing a proof of our main result.

\begin{pop}[Balanced segments in \LpLSn]
\label{pop: equal angles along geodesic}
Let $X$ be a strongly causal and locally causally closed \LpLS with timelike curvature bounded below by $K \in \mathbb{R}$, and let $\alpha:[0,1] \to X$ be a timelike distance realiser. Let $x=\alpha(t)$ for $t \in (0,1)$ and consider the restrictions $\alpha_-=\alpha|_{[0,t]}$ and $\alpha_+=\alpha|_{[t,1]}$ as past-directed and future-directed distance realisers emanating from $x$, respectively. 
Let $\beta$ be a timelike distance realiser emanating from $x$. Then $\ma_x(\alpha_-,\beta)=\ma_x(\alpha_+,\beta)$. 
\end{pop}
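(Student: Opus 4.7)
I would adapt the classical metric balanced-segments result, \cite[Lemma 1.3]{LS13}, to the Lorentzian setting. The key observation is that, in the model space $\lm{K}$, a timelike distance realiser is \emph{straight}, so the hyperbolic angles formed on either side of a point on it with a third direction are automatically equal. Without loss of generality I assume $\beta$ is future-directed; the past-directed case then follows by time-reversal together with the symmetric definition of the (unsigned) angle.

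First I would set up a large timelike triangle enveloping both angles of interest. For small parameters $\epsilon, \delta > 0$, let $y_\pm := \alpha(t \pm \epsilon)$ and $z := \beta(\delta)$. From $y_- \ll x \ll z$ one has $y_- \ll z$ by push-up, and since $x \ll y_+$ with $z \to x$ as $\delta \to 0$, openness of the chronological relation (available from the ambient strong causality and local causal closure) lets us choose $\delta$ small enough, given $\epsilon$, so that $z \ll y_+$. Hence $\Delta(y_-, z, y_+)$ is an admissible timelike triangle whose $[y_-, y_+]$-side is realised by the distance realiser $\alpha|_{[t-\epsilon, t+\epsilon]}$, which passes through $x$.

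Next I would exploit the straight-segment behaviour in the model. Build a comparison triangle $\Delta(\by_-, \bz, \by_+)$ in $\lm{K}$ and mark the point $\bx$ on the timelike segment $[\by_-, \by_+]$ with $\tau(\by_-, \bx) = \epsilon$, which exists uniquely because $\alpha$ is a distance realiser, so the side lengths along $[\by_-, \by_+]$ add correctly. A short computation with the Lorentzian law of cosines then shows that the hyperbolic angles at $\bx$ in the two sub-configurations $\Delta(\by_-, \bx, \bz)$ and $\Delta(\bx, \bz, \by_+)$ are equal, since the auxiliary geodesic $[\bx, \bz]$ makes the same hyperbolic angle with the past and future directions along the single affine timelike geodesic $[\by_-, \by_+]$. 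Denote this common value by $\bar{\omega}(\epsilon, \delta)$.

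Finally I would use the curvature bound to pass from $\bar{\omega}(\epsilon, \delta)$ back to the two comparison angles $\tilde\ma^K_x(y_-, z)$ and $\tilde\ma^K_x(z, y_+)$ that define the upper angles $\ma_x(\alpha_-, \beta)$ and $\ma_x(\alpha_+, \beta)$, combining the angle-monotonicity of Corollary \ref{cor:LOC Mono} with the $(\geq K)$-angle comparison to obtain a two-sided squeeze delivering equality of these two comparison angles in the limit $\epsilon, \delta \to 0^+$. The main obstacle will be turning the one-sided inequality naturally supplied by the $(\geq K)$-bound into such a squeeze; the way out is that the distance realiser $\alpha$ saturates the reverse triangle inequality exactly, forcing the model configuration to be genuinely straight through $\bx$ and leaving no slack in the limit. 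Care is also required to handle the two distinct admissibility domains for the limsups defining $\ma_x(\alpha_-, \beta)$ and $\ma_x(\alpha_+, \beta)$, which is where the strong causality hypothesis plays its main role.
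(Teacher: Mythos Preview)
The paper does not give a self-contained proof; it simply cites \cite[Corollary 4.6]{BS22} for the equality and \cite[Lemma 4.10]{BS22} for the existence of the angle. Your proposal attempts a direct argument, which is more ambitious, but it has a genuine gap at precisely the step you yourself flag as ``the main obstacle.''

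The setup is sound: the large triangle $\Delta(y_-,z,y_+)$ with $x$ on the side $[y_-,y_+]$ does give a common model angle $\bar\omega$ on both sides of the comparison point $\bx$, because the model segment $[\by_-,\by_+]$ is straight. However, when you pass from $\bar\omega$ back to the two small comparison angles $\tilde\ma^K_x(y_-,z)$ and $\tilde\ma^K_x(z,y_+)$, the lower curvature bound (via triangle comparison, $\tau(x,z)\leq\tau(\bx,\bz)$) combined with Corollary~\ref{cor:LOC Mono} yields inequalities that point the \emph{same} way: in both sub-triangles the sides $[\by_\mp,\bx]$ and $[\by_\mp,\bz]$ (one short, one long) are fixed while the remaining short side $[\bx,\bz]$ is at least $\tau(x,z)$, so one obtains $\tilde\ma^K_x(y_-,z)\geq\bar\omega$ and $\tilde\ma^K_x(z,y_+)\geq\bar\omega$. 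In the limit this gives only $\ma_x(\alpha_\pm,\beta)\geq\limsup\bar\omega$, which is no squeeze. Your remark that saturation of the reverse triangle inequality along $\alpha$ ``leaves no slack'' is already fully spent in making $[\by_-,\by_+]$ straight through $\bx$; it contributes nothing further toward a reverse inequality. To close the argument one needs a genuinely two-sided input --- for instance the vanishing $\ma_x(\alpha_-,\alpha_+)=0$ together with an appropriate triangle inequality for hyperbolic angles --- neither of which your sketch invokes. The proof in \cite{BS22} that the paper relies on proceeds along those lines rather than through the large-triangle comparison you describe.
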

\begin{proof}
See \cite[Corollary 4.6]{BS22} (and \cite[Lemma 4.10]{BS22} for the existence of the angle).
\end{proof}

Throughout this paper, we make use of several different formulations of curvature bounds via comparison methods. Each of these has been introduced in the context of Lorentzian length spaces in earlier works, with full details on all current formulations being found in \cite{BKR23}, which also provides conditions under which they are equivalent. 
Since we predominantly use the formulation of curvature bounds in terms of angle comparison, we now provide this explicitly. This angle comparison condition is analogous to the one globalised by \cite{Har82} in the smooth Lorentzian setting and is the definition to which our globalisation result will directly apply. 

\begin{dfn}[Curvature bounds by angle comparison]
\label{def-cb-ang}
An open subset $U$ in a regular \LpLS $X$ is called a \emph{$(\geq K)$-comparison neighbourhood} if it satisfies the following:  
\begin{enumerate}
\item\label{def-cb-ang.item1} $\tau$ is continuous on $(U\times U) \cap \tau^{-1}([0,D_K))$ and this set is open.
\item\label{def-cb-ang.item2} For all $x,y \in U$ with $x \ll y$ and $\tau(x,y) < D_K$ there exists a distance realiser contained entirely in $U$ connecting $x$ and $y$.
\item \label{def-cb-ang.main} Let $\alpha:[0,a]\to U,\beta:[0,b]\to U$ be timelike distance realisers with arbitrary time-orientation and such that $x:=\alpha(0)=\beta(0)$ and $\Delta(x,\alpha(a),\beta(b))$, with some permutation of vertices, is an admissible causal triangle satisfying size bounds. 
Then 
\begin{equation*}
\ma_x^{\mathrm{S}}(\alpha,\beta)\leq\tilde{\ma}_x^{K,\mathrm{S}}(\alpha(a),\beta(b))\,.
\end{equation*}
\item\label{def-cb-ang.item4} Additionally, the following property must hold. If $\alpha,\beta,\gamma:[0,\varepsilon)\to U$ are three timelike curves with $x:=\alpha(0)=\beta(0)=\gamma(0)$, $\alpha,\gamma$ pointing in the same time direction, and $\beta$ in the other, then we have the following special case of the triangle inequality of angles: 
\begin{equation}
\label{eq: triangle inequality for angles for lower curvature bounds}
\ma_x(\alpha,\gamma)\leq\ma_x(\alpha,\beta)+\ma_x(\beta,\gamma)\, .
\end{equation}
\end{enumerate}
We say that $X$ has \emph{curvature bounded below by $K$ in the sense of angle comparison} if every point in $X$ has a $(\geq K)$-comparison neighbourhood. 

If $X$ itself is a $(\geq K)$-comparison neighbourhood, then we say that $X$ has \emph{curvature globally bounded below by $K$}, and similarly for curvature bounds above. 
\end{dfn}

Observe that, in point (iv) of the above definition, we can also take the curves to be maps into $X$, as the angles only depend on the initial segments of the curves. Furthermore, when considering curvature bounds from above, the inequality in (iii) is reversed and (iv) is dropped, though this notion will not be used in the remainder of the paper.
We make use of two other characterisations of curvature bounds in this paper: hinge comparison and triangle comparison.
The definition of triangle comparison does not include the triangle inequality of angles given in \eqref{eq: triangle inequality for angles for lower curvature bounds}, but the equivalence of all three characterisations (Proposition \ref{pop: equivalence of curvature bounds}) is at present only known under the assumption that \eqref{eq: triangle inequality for angles for lower curvature bounds} holds.
This is another reason why we prefer to work with angle comparison directly: \eqref{eq: triangle inequality for angles for lower curvature bounds} is already imposed by definition. 
We explain here how to adapt the definition of a $(\geq K)$-comparison neighbourhood to obtain these alternative characterisations.

Curvature is bounded below in the sense of \emph{hinge comparison} if item (iii) of Definition \ref{def-cb-ang} is replaced by the following statement:
Let $\alpha:[0,a]\to U,\beta:[0,b]\to U$ be timelike distance realisers with arbitrary time-orientation and such that $x:=\alpha(0)=\beta(0)$ and $\Delta(x,\alpha(a),\beta(b))$, with some permutation of vertices, is an admissible causal triangle satisfying size bounds. Then either $\ma_x^{\mathrm{S}}(\alpha, \beta) = - \infty$ or else $\ma_x(\alpha, \beta)$ is finite, in which case, letting $(\tilde{\alpha}, \tilde{\beta})$ form a comparison hinge in $\lm{K}$ for $(\alpha, \beta)$, 
\begin{equation*}
\tau(\alpha(a), \beta(b)) \geq \tau(\tilde{\alpha}(a), \tilde{\beta}(b))\,.
\end{equation*}

Curvature is bounded below in the sense of \emph{triangle comparison} if item (iv) of Definition \ref{def-cb-ang} is removed and item (iii)  is replaced by the following statement: 
Let $\Delta(x,y,z)$ be a timelike triangle in $U$, with $p$, $q$ two points on the sides of $\Delta(x,y,z)$. Let $\Delta(\bx, \by, \bz)$ be a comparison triangle in $\lm{K}$ for $\Delta(x,y,z)$ and $\bp$ and $\bq$ be comparison points for $p$ and $q$ respectively. Then
\begin{equation*}
\tau(p,q) \leq \tau(\bp, \bq)\,.
\end{equation*}

We now state the equivalence result for these three characterisations of curvature bounds. 
This result is an application of part of \cite[Theorem 5.1]{BKR23} to our specific setting.

\begin{pop}[Equivalence of curvature bounds]
\label{pop: equivalence of curvature bounds}
    Let $X$ be a globally hyperbolic and regular Lorentzian length space which satisfies \eqref{eq: triangle inequality for angles for lower curvature bounds}. 
    Then the following are equivalent for an open subset $U \subseteq X$: 
    \begin{enumerate}
        \item $U$ is a $(\geq K)$-comparison neighbourhood in the sense of angle comparison, cf.\ Definition \ref{def-cb-ang}. 
        \item $U$ is a $(\geq K)$-comparison neighbourhood in the sense of hinge comparison, cf.\ \cite[Definition 3.14]{BKR23}.
        \item $U$ is a $(\geq K)$-comparison neighbourhood in the sense of timelike triangle comparison, cf.\ \cite[Definition 3.1]{BKR23}.  
    \end{enumerate}
\end{pop}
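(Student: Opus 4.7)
The plan is to reduce the proposition to a direct application of \cite[Theorem 5.1]{BKR23}, which already establishes the equivalence of a broader web of curvature bound characterisations for Lorentzian (pre-)length spaces. First I would verify that our standing hypotheses --- global hyperbolicity, regularity, and the triangle inequality for angles \eqref{eq: triangle inequality for angles for lower curvature bounds} --- supply all the background assumptions required by the relevant portion of that theorem. Global hyperbolicity yields strong causality together with compact causal diamonds, which guarantee the existence of distance realisers; regularity ensures that these realisers are timelike between timelike related points; and \eqref{eq: triangle inequality for angles for lower curvature bounds} is imposed explicitly so that the triangle comparison formulation (which does not build it in) can be brought into line with the angle comparison formulation (which does, via item (iv) of Definition \ref{def-cb-ang}).

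The natural chain of implications to carry out is (iii) $\Rightarrow$ (ii) $\Rightarrow$ (i). For (iii) $\Rightarrow$ (ii), I would form a comparison triangle from a given timelike hinge $(\alpha,\beta)$, place a comparison point on a side at the appropriate parameter, and combine triangle comparison with law-of-cosines monotonicity (Corollary \ref{cor:LOC Mono}) to deduce the hinge inequality $\tau(\alpha(a),\beta(b))\geq \tau(\tilde\alpha(a),\tilde\beta(b))$. For (ii) $\Rightarrow$ (i), I would apply hinge comparison at the common vertex of two timelike distance realisers and pass to the limit as the endpoints shrink to that vertex; monotonicity of the law of cosines once again delivers the required angle inequality.

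The main obstacle is the direction (i) $\Rightarrow$ (iii), which is classically the hardest step in equivalence proofs of this type. Given a timelike triangle $\Delta(x,y,z)$ with points $p,q$ on its sides, I would subdivide $\Delta$ along a distance realiser from a suitably chosen vertex through one of the two interior points, apply angle comparison to each of the two resulting subtriangles, and then glue the angle estimates at the shared intermediate vertex using \eqref{eq: triangle inequality for angles for lower curvature bounds} to control the relevant angles against their comparison values in $\lm{K}$. This is exactly where the explicit assumption \eqref{eq: triangle inequality for angles for lower curvature bounds} becomes indispensable, since triangle comparison does not encode it. As the technical heart of all three implications is already developed in \cite{BKR23}, the proof of the proposition ultimately amounts to matching Definition \ref{def-cb-ang} and the hinge/triangle formulations above to their counterparts \cite[Definition 3.14]{BKR23} and \cite[Definition 3.1]{BKR23}, and invoking the corresponding portion of \cite[Theorem 5.1]{BKR23}.
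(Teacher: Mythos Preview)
Your proposal is correct and matches the paper's approach exactly: the paper does not give an independent proof of this proposition but simply states it as ``an application of part of \cite[Theorem 5.1]{BKR23} to our specific setting,'' which is precisely your plan. Your additional sketch of the implication chain (iii) $\Rightarrow$ (ii) $\Rightarrow$ (i) $\Rightarrow$ (iii) is more detail than the paper provides, but it is consistent with the cited reference and with the role of \eqref{eq: triangle inequality for angles for lower curvature bounds} as the bridge needed for the (i) $\Rightarrow$ (iii) direction.
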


Our eventual proof of the globalisation of timelike curvature bounds will consider admissible causal triangles which are not contained in comparison neighbourhoods and for which Definition \ref{def-cb-ang}\ref{def-cb-ang.main} fails to hold at some vertex and show that, under certain assumptions, these cannot exist. We formulate the aforementioned failure characteristic more precisely as follows.

\begin{dfn}[Angle condition holds/ fails]\label{def-failing-angle}
Let $X$ be a regular Lorentzian pre-length space with timelike curvature bounded below by $K\in \mathbb{R}$ in the sense of angle comparison and let $\alpha:[0,a]\rightarrow X$, $\beta:[0,b]\rightarrow X$ be timelike distance realisers of arbitrary time-orientation (not necessarily contained in a comparison neighbourhood), with $L(\alpha)$, $L(\beta)$, $\tau(\alpha(a), \beta(b))$, $\tau(\beta(b), \alpha(a)) < D_K$,
and such that $x\coloneqq \alpha(0)=\beta(0)$ and $\alpha(a),\beta(b)$ are causally related. We say that the \emph{angle condition holds} at $x$ if Definition \ref{def-cb-ang}\ref{def-cb-ang.main} is satisfied at $x$, with respect to the curvature bound $K$ on $X$. Similarly, we say that the \emph{angle condition fails to hold} at $x$ if Definition \ref{def-cb-ang}\ref{def-cb-ang.main} is not satisfied at $x$, i.e. if the inequality
\begin{equation*}
\ma_x^{\mathrm{S}}(\alpha,\beta)>\tilde{\ma}_x^{K,\mathrm{S}}(\alpha(a),\beta(b))\,,
\end{equation*}
holds, with respect to the curvature bound $K$ on $X$. In particular, the angle condition may be said to hold/fail at vertices between timelike sides of an admissible causal triangle. 
\end{dfn}

Moreover, note that by \cite[Remark 3.12]{BKR23}, it is sufficient to only consider timelike triangles when dealing with curvature bounds in the sense of angle comparison.

In order to verify whether or not triangles may have a failing angle condition, we need to be able to divide timelike triangles into smaller timelike triangles for which the answer to this question is known. 
To do so, we will use the Lorentzian versions of Alexandrov's Lemma. There are two very similar versions, each corresponding to a different subcase depending on which side we divide along; more precisely, the ``across version'' discusses divisions along the longest side, while the ``future version'' discusses divisions along one of the shorter sides. Since the statements of these lemmata are rather extensive, we only provide the statement of the latter. The former is illustrated in Figure \ref{fig: alexlem across concave} and the reader is referred to \cite[Proposition 2.42, 2.43]{BORS23} and \cite[Lemma 4.2.1, 4.2.2]{BR22} for more detail, including proofs of the respective statements. While the presentation in \cite{BORS23} concerns the case $K=0$, generalising to non-zero $K$ is straightforward, provided we assume the associated size bounds.

\begin{pop}[Alexandrov Lemma: future version]
\label{lem: alexlem future}
Let $X$ be a \LpLSn. 
Let $\Delta:=\Delta(p,q,r)$ be a timelike triangle satisfying size bounds for $K$. 
Let $x$ be a point on the side $[p,q]$, such that the distance realiser between $x$ and $r$ exists. 
Then we can consider the smaller triangles $\Delta_1:=\Delta(p,x,r)$ and $\Delta_2:=\Delta(x,q,r)$. We construct a comparison situation consisting of a comparison triangle $\bar{\Delta}_1$ for $\Delta_1$ and $\bar{\Delta}_2$ for $\Delta_2$, with $\bar{p}$ and $\bar{q}$ on different sides of the line through $[\bar{x},\bar{r}]$ and a comparison triangle $\tilde{\Delta}$ for $\Delta$ with a comparison point $\tilde{x}$ for $x$ on the side $[\tilde p,\tilde q]$. This contains the subtriangles $\tilde{\Delta}_1:=\Delta(\tilde{p},\tilde{x},\tilde{r})$ and $\tilde{\Delta}_2:=\Delta(\tilde{x},\tilde{q},\tilde{r})$, see Figure \ref{fig: alexlem future convex}.

\begin{figure}
\begin{center}
\begin{tikzpicture}
\draw (7,0)-- (6.236686420076234,1.0356870286413933);
\draw (6.236686420076234,1.0356870286413933)-- (7.548286699820673,4.0374024205174575);
\draw (7.548286699820673,4.0374024205174575)-- (7,0);
\draw (6.15561793604921,1.839784099337744)-- (6.236686420076234,1.0356870286413933);
\draw (6.15561793604921,1.839784099337744)-- (7.548286699820673,4.0374024205174575);
\draw (9.204717676576246,1.6977850199451887)-- (10,0);
\draw (9.204717676576246,1.6977850199451887)-- (10.90284745202584,4.1006259914346685);
\draw (10.90284745202584,4.1006259914346685)-- (10,0);
\draw [dashed] (9.628868249068917,0.7922996759744166)-- (10.90284745202584,4.1006259914346685);
\begin{scriptsize}
\coordinate [circle, fill=black, inner sep=0.7pt, label=270: {$\tilde{p}$}] (A1) at (10,0);
\coordinate [circle, fill=black, inner sep=0.7pt, label=180: {$\tilde{x}$}] (A1) at (9.628868249068917,0.7922996759744166);
\coordinate [circle, fill=black, inner sep=0.7pt, label=270: {$\bar{p}$}] (A1) at (7,0);
\coordinate [circle, fill=black, inner sep=0.7pt, label=180: {$\bar{x}$}] (A1) at (6.236686420076234,1.0356870286413933);
\coordinate [circle, fill=black, inner sep=0.7pt, label=90: {$\bar{r}$}] (A1) at (7.548286699820673,4.0374024205174575);
\coordinate [circle, fill=black, inner sep=0.7pt, label=180: {$\bar{q}$}] (A1) at (6.15561793604921,1.839784099337744);
\coordinate [circle, fill=black, inner sep=0.7pt, label=180: {$\tilde{q}$}] (A1) at (9.204717676576246,1.6977850199451887);
\coordinate [circle, fill=black, inner sep=0.7pt, label=90: {$\tilde{r}$}] (A1) at (10.90284745202584,4.1006259914346685);
\end{scriptsize}
\end{tikzpicture}
\end{center}
\caption{A convex situation in the future version of Alexandrov's Lemma. }
\label{fig: alexlem future convex}
\end{figure}
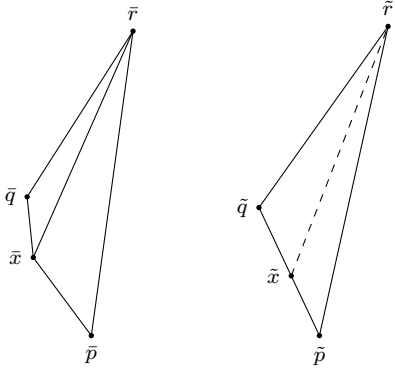

Then the situation $\bar{\Delta}_1$,$\bar{\Delta}_2$ is convex (concave) at $x$ (i.e.\
$\ma_{\bar{x}}(\bar{q},\bar{r})\leq\ma_{\bar{x}}(\bar{p},\bar{r})$ (or $\geq$)) if and only if 
$\tau(x,r) = \tau(\bx,\br) \leq \tau(\tilde{x},\tilde{r})$ (or $\geq$). 
The same is true if $x$ is a point on the side $[q,r]$. 
\end{pop}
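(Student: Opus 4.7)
My plan is to treat this as a pure computation inside the Lorentzian model space $\lm{K}$, using the law of cosines monotonicity together with the fact that the barred and tilded subtriangles agree on almost all their side lengths. First I would observe that, by construction of the comparison triangles and of the comparison point $\tilde{x}$ on the side $[\tilde{p},\tilde{q}]$,
\begin{equation*}
|\bar{p}\bar{x}|=|\tilde{p}\tilde{x}|\,,\quad |\bar{x}\bar{q}|=|\tilde{x}\tilde{q}|\,,\quad |\bar{p}\bar{r}|=|\tilde{p}\tilde{r}|\,,\quad |\bar{q}\bar{r}|=|\tilde{q}\tilde{r}|\,,
\end{equation*}
so the only length that may differ between $\bar{\Delta}_i$ and $\tilde{\Delta}_i$ is the hinge side $|\bar{x}\bar{r}|=\tau(x,r)$ versus $|\tilde{x}\tilde{r}|$. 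The question thus reduces to tracking how the two angles $\ma_{\bar{x}}(\bar{p},\bar{r})$ and $\ma_{\bar{x}}(\bar{q},\bar{r})$ depend on the single parameter $|\bar{x}\bar{r}|$, with the tilded triangle providing the reference.

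At the reference value $|\bar{x}\bar{r}|=|\tilde{x}\tilde{r}|$ each $\bar{\Delta}_i$ is congruent to $\tilde{\Delta}_i$, so it suffices to show $\ma_{\tilde{x}}(\tilde{p},\tilde{r})=\ma_{\tilde{x}}(\tilde{q},\tilde{r})$. Since $\tilde{x}$ is an interior point of the timelike distance realiser $[\tilde{p},\tilde{q}]$ and $[\tilde{x},\tilde{r}]$ is a timelike distance realiser emanating from $\tilde{x}$, this is a direct application of the balanced-segments property (Proposition \ref{pop: equal angles along geodesic}) inside $\lm{K}$, which has constant curvature $K$ and trivially meets the required strong causality and local causal closedness. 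I denote the common value by $\omega$.

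Then I would apply Corollary \ref{cor:LOC Mono} to each $\bar{\Delta}_i$ to determine the direction of monotonicity as $|\bar{x}\bar{r}|$ varies away from $|\tilde{x}\tilde{r}|$. In $\bar{\Delta}_1=\Delta(\bar{p},\bar{x},\bar{r})$ the causal order $\bar{p}\leq\bar{x}\leq\bar{r}$ forces, via the reverse triangle inequality in $\lm{K}$, the longest side to be $[\bar{p},\bar{r}]$; hence $|\bar{x}\bar{r}|$ is a short side, and with $|\bar{p}\bar{x}|$ and $|\bar{p}\bar{r}|$ held fixed the angle $\ma_{\bar{x}}(\bar{p},\bar{r})$ is strictly decreasing in $|\bar{x}\bar{r}|$. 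In $\bar{\Delta}_2=\Delta(\bar{x},\bar{q},\bar{r})$ the causal order $\bar{x}\leq\bar{q}\leq\bar{r}$ instead makes $[\bar{x},\bar{r}]$ itself the longest side, so with $|\bar{x}\bar{q}|$ and $|\bar{q}\bar{r}|$ held fixed the angle $\ma_{\bar{x}}(\bar{q},\bar{r})$ is strictly increasing in $|\bar{x}\bar{r}|$. Combining these opposing monotonicities with the baseline $\omega$ yields the desired equivalence: $|\bar{x}\bar{r}|\leq|\tilde{x}\tilde{r}|$ if and only if $\ma_{\bar{x}}(\bar{p},\bar{r})\geq\omega\geq\ma_{\bar{x}}(\bar{q},\bar{r})$, which is precisely the convex case; the concave direction follows by reversing inequalities, and the case of $x$ on $[q,r]$ is handled by swapping the roles of the two subtriangles. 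The main obstacle is purely bookkeeping: correctly identifying, from the causal orientation of each subtriangle, which side is longest so that Corollary \ref{cor:LOC Mono} is applied in the correct direction.
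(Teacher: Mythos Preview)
Your argument is correct. The paper does not give its own proof of this proposition; it simply cites \cite[Proposition 2.42, 2.43]{BORS23} and \cite[Lemma 4.2.1, 4.2.2]{BR22}, so there is nothing to compare against in the text itself. Your approach---reducing to a one-parameter family in $\lm{K}$, identifying the reference configuration $|\bar{x}\bar{r}|=|\tilde{x}\tilde{r}|$ where the two angles coincide, and then using Corollary \ref{cor:LOC Mono} with the correct identification of longest versus short side in each subtriangle---is exactly the standard route and matches the spirit of the cited proofs.

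One minor remark: invoking Proposition \ref{pop: equal angles along geodesic} to obtain $\ma_{\tilde{x}}(\tilde{p},\tilde{r})=\ma_{\tilde{x}}(\tilde{q},\tilde{r})$ is valid, since $\lm{K}$ certainly satisfies the hypotheses, but it is heavier than necessary. In the model space this equality is immediate from the smooth definition of the hyperbolic angle (it depends only on the tangent directions at $\tilde{x}$), or equivalently from a direct law-of-cosines computation. Either justification is fine; your version just imports a bit more machinery than required.
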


Note that  if $X$ has timelike curvature bounded below (resp. above) by $K$ and $\Delta$ is within a comparison neighbourhood then 
$\tau (x,q)\leq \bar{\tau}(\tilde{x},\tilde{q})$ (resp. $\tau (x,q)\geq \bar{\tau}(\tilde{x},\tilde{q})$)
and so the convexity (resp. concavity) condition is always satisfied. 

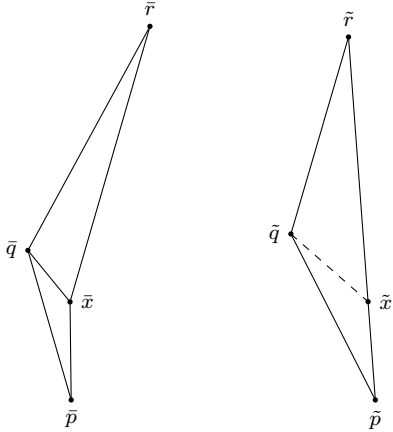
\begin{figure}
\begin{center}
\begin{tikzpicture}
\draw (-0.5693860319981044,1.9834819014638239)-- (0,0);
\draw (2.8896135929029714,2.2006721639230697)-- (4,0);
\draw (0,0)-- (-0.015287989182225509,1.3000898902049949);
\draw (-0.015287989182225509,1.3000898902049949)-- (-0.5693860319981044,1.9834819014638239);
\draw (-0.5693860319981044,1.9834819014638239)-- (1.0360567397613818,4.954583800321347);
\draw (1.0360567397613818,4.954583800321347)-- (-0.015287989182225509,1.3000898902049949);
\draw (2.8896135929029714,2.2006721639230697)-- (3.647225049614162,4.812946100427443);
\draw (3.647225049614162,4.812946100427443)-- (4,0);
\draw [dashed] (2.8896135929029714,2.2006721639230697)-- (3.904456784270502,1.303506235532433);
\begin{scriptsize}
\coordinate [circle, fill=black, inner sep=0.7pt, label=270: {$\bar{p}$}] (A1) at (0,0);
\coordinate [circle, fill=black, inner sep=0.7pt, label=0: {$\bar{x}$}] (A1) at (-0.015287989182225509,1.3000898902049949);
\coordinate [circle, fill=black, inner sep=0.7pt, label=180: {$\bar{q}$}] (A1) at (-0.5693860319981044,1.9834819014638239);
\coordinate [circle, fill=black, inner sep=0.7pt, label=90: {$\bar{r}$}] (A1) at (1.0360567397613818,4.954583800321347);
\coordinate [circle, fill=black, inner sep=0.7pt, label=270: {$\tilde{p}$}] (A1) at (4,0);
\coordinate [circle, fill=black, inner sep=0.7pt, label=180: {$\tilde{q}$}] (A1) at (2.8896135929029714,2.2006721639230697);
\coordinate [circle, fill=black, inner sep=0.7pt, label=90: {$\tilde{r}$}] (A1) at (3.647225049614162,4.812946100427443);
\coordinate [circle, fill=black, inner sep=0.7pt, label=0: {$\tilde{x}$}] (A1) at (3.904456784270502,1.303506235532433);
\end{scriptsize}
\end{tikzpicture}
\end{center}
\caption{A concave situation in the across version of Alexandrov's Lemma.}
\label{fig: alexlem across concave}
\end{figure}

\subsection{Null distance}

The null distance $d_T$ induced by a time function $T$ was originally introduced by Sormani and Vega \cite{SV16} in the smooth setting, as a convenient way of equipping a spacetime with a (distance) metric which is compatible with the causal structure. This concept has also been introduced in the setting of synthetic Lorentzian geometry, cf.\ \cite{KS21}, and is defined as follows: 

\begin{dfn}[Time Functions and Null Distance]\label{def:nullDistanceDefinition}
    Let $X$ be a \LpLSn .
    \begin{enumerate}
        \item A continuous map $T:X \to \rr$ is called a \emph{time function} if it is strictly monotonic with respect to the causal relation.
        \item A curve $\gamma: [a,b] \to X$ is called \emph{piecewise causal} if there exists a partition $a=s_1 \leq \ldots \leq s_k = b$ of $[a,b]$ such that $\gamma$ is causal or constant on each $[s_i, s_{i+1}]$.
        \item The \emph{null length} of a piecewise causal curve $\gamma: [a,b] \to X$ is 
            \begin{equation*}
                \hat{L}_T(\gamma) = \sum_{i=1}^k |T(\gamma(s_{i+1}) - T(\gamma(s_i)|\,.
            \end{equation*}
        \item The \emph{null distance} between two points $p$ and $q$ in $X$ is 
            \begin{equation}\label{def:nullDistance}
                d_T(p,q) = \inf \{ \hat{L}_T(\gamma) | \gamma \textnormal{ is piecewise causal from } p \textnormal{ to } q \} \,.
            \end{equation}    
    \end{enumerate}
\end{dfn}

In the case of spacetimes, if the infimum in \eqref{def:nullDistance} between non-timelike related point is achieved, this must occur along a piecewise null geodesic, cf.\ \cite[Lemma 3.20]{SV16}, inspiring the name. 
However, the null distance is not necessarily a true distance, with \cite[Theorem 4.6]{SV16} demonstrating that a necessary and sufficient condition for $d_T$ to be a distance function is $T$ being locally anti-Lipschitz. 
\medskip

With regard to our ultimate goal of globalisation, the null distance is also an ideal way of describing the ``size" of a timelike triangle. Contrary to the metric setting, there are always two notions of size at play in a Lorentzian pre-length space: on the one hand, we have the $\tau$-length of the sides of a triangle, which may be used to describe timelike curvature bounds, and on the other, we have the $d$-length of the sides, which is responsible for whether or not a triangle is inside a comparison neighbourhood. It will turn out that particularly well behaved null distances, when combined with timelike diamonds which are also comparison neighbourhoods, {\`a} la \cite[Proposition 4.3]{BNR23}, form the key to controlling both of these aspects simultaneously.
\medskip

Although in the next section we directly assume that our space possesses a time function, we first draw the reader's attention to the following result, which provides sufficient conditions for this to be the case.

\begin{pop}[Existence of time functions]\label{pop:exist-time-function}
Let $X$ be a second countable, globally hyperbolic Lorentzian length space. Then $X$ possesses a time function $T$.    
\end{pop}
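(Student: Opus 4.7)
My plan is to follow Geroch's classical volume-function construction, adapted to the synthetic Lorentzian length space setting. First I would exploit second countability to produce a finite Borel measure $\mu$ on $X$ of full support, for instance by fixing a countable dense sequence $(p_n)\subseteq X$ and setting $\mu = \sum_n 2^{-n}\delta_{p_n}$; this ensures $\mu(U) > 0$ for every nonempty open $U \subseteq X$. I then define the past/future volume functions $V^\pm(x) := \mu(I^\pm(x))$ and set
\[ T(x) := V^-(x) - V^+(x), \]
which is real-valued since $\mu$ is finite.

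For strict monotonicity with respect to $\leq$, I would argue as follows. Given $x \leq y$ with $x \neq y$, the inclusions $I^-(x) \subseteq I^-(y)$ and $I^+(y) \subseteq I^+(x)$ are immediate. In the chronological case $x \ll y$, the open timelike diamond $I(x,y) = I^+(x)\cap I^-(y)$ lies inside $I^-(y)\setminus I^-(x)$ (no timelike loops since chronology holds under global hyperbolicity), and by full support has positive $\mu$-measure; symmetrically $I(x,y) \subseteq I^+(x)\setminus I^+(y)$, yielding $T(y) > T(x)$. For the purely null case, I would invoke the distinguishing property (which follows from global hyperbolicity of an \LLSn{}), so that $I^-(x) \neq I^-(y)$ or $I^+(x) \neq I^+(y)$; combined with the fact that $J^\pm$ are closed in a globally hyperbolic \LLSn{}, the set difference $I^-(y)\setminus J^-(x)$ (respectively $I^+(x)\setminus J^+(y)$) contains a nonempty open set and therefore has positive $\mu$-measure.

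For continuity, the function $V^-$ is lower semi-continuous because $\{x : p \in I^-(x)\} = I^+(p)$ is open, and dually $-V^+$ is also lower semi-continuous, so $T$ is lower semi-continuous. The hard part will be upper semi-continuity, which reduces to showing that $\mu(\partial I^\pm(x)) = 0$ for every $x$. In the smooth setting this is automatic from absolute continuity of $\mu$ with respect to a volume form, but in a Lorentzian length space one must work harder: I would attempt either a countable-exhaustion argument, refining the sequence $(p_n)$ so that it avoids the countably many relevant boundaries encountered in a base of precompact causal diamonds, or a regularisation step, replacing $T$ by an average against a reference Borel measure built from the background metric $d$. In each case the delicate task is to preserve strict monotonicity and finiteness while ensuring that the null boundary contributions vanish, so that $T$ is continuous and not merely semi-continuous.
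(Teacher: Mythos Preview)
Your approach follows the classical Geroch volume-function construction, whereas the paper's proof simply invokes existing results from the literature (combining theorems from \cite{BGH21}, \cite{ACS20}, \cite{Rot22}, and \cite{KS18}). Attempting a self-contained argument is reasonable, but your proposal has a genuine gap at the continuity step, which you yourself flag as ``the hard part'' without actually resolving.

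The concrete obstruction is your choice of measure. With $\mu = \sum_n 2^{-n}\delta_{p_n}$ purely atomic, the function $V^-(x) = \mu(I^-(x))$ is \emph{not} continuous: as $x$ crosses $\partial I^+(p_n)$, the value $V^-(x)$ jumps by $2^{-n}$. Your first proposed fix---refining $(p_n)$ to avoid ``the countably many relevant boundaries''---cannot work, because continuity must hold at every $x \in X$, so the family $\{\partial I^\pm(x) : x \in X\}$ to be avoided is uncountable and in any non-degenerate example sweeps out all of $X$; no countable set of atoms can dodge it. Your second fix, averaging against a measure built from the background metric $d$, is closer in spirit to what is actually done, but it shifts the burden rather than discharging it: you would need a non-atomic Borel measure of full support on a general second countable \LLS together with an argument that $\mu(\partial I^\pm(x)) = 0$ for \emph{every} $x$, neither of which you establish. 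This is precisely the technical content carried by the references the paper cites. (A smaller slip: $V^+$ is lower semi-continuous by the same reasoning you give for $V^-$, so $-V^+$ is \emph{upper} semi-continuous, and hence $T = V^- - V^+$ is not a priori semi-continuous in either direction.) As it stands, your construction does not deliver a continuous time function.
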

\begin{proof}
The result is clear upon combining \cite[Theorem 3.2]{BGH21} with \cite[Theorem 3.20]{ACS20}, \cite[Lemma 3.8]{Rot22}, and \cite[Theorem 3.7]{KS18}. 
\end{proof}

We now wish to make the notion of a well behaved null distance more precise; in particular, we shall require our null distance to be a finite, continuous pseudo-metric.\footnote{By pseudo-metric we mean a metric which does not always distinguish points. Compare with the `semi-metric' applied to the quotient spaces in \cite{BR22,Rot22}.} If two points $p$ and $q$ are not connected by a piecewise causal curve, then $d_T(p,q) = \infty$. 
Therefore, if we require the null distance to be finite, it is necessary for there to be a piecewise causal curve between every pair of points. We begin by investigating when a space has this level of causal connectedness. 

\begin{lem}[Path-connected \LpLSsn]
\label{lem: path connected}
Let $X$ be a causally path connected Lorentzian pre-length space such that for each $x \in X$ either $I^+(x)$ or $I^-(x)$ is non-empty. Then the following are equivalent:
\begin{enumerate}
\item $X$ is connected.
\item $X$ is path connected.
\item $X$ is piecewise causal path connected, i.e.\ any $x,y\in X$ can be connected by a continuous curve consisting of future directed and past directed causal pieces, cf.\ \cite[Definition 3.2]{KS21}.
\end{enumerate}
\end{lem}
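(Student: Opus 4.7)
The implications (iii) $\Rightarrow$ (ii) $\Rightarrow$ (i) are immediate: a piecewise causal curve is by definition a continuous curve (so (iii) gives path-connectedness), and every path-connected topological space is connected. The content of the lemma is therefore the implication (i) $\Rightarrow$ (iii).

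For (i) $\Rightarrow$ (iii), the plan is to introduce the equivalence relation on $X$ defined by $x \sim y$ if and only if there is a piecewise causal curve connecting $x$ to $y$. Reflexivity holds via a constant curve, symmetry is clear as piecewise causal curves are closed under reversal and rescaling, and transitivity follows by concatenation. The goal is then to show that every equivalence class is open; since $X$ is the disjoint union of its equivalence classes and each class is then simultaneously open and closed (as complement of a union of open classes), connectedness forces there to be only one class, which is exactly statement (iii).

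To prove openness, fix $x\in X$ and, without loss of generality, assume $I^+(x)\neq\emptyset$ (the case $I^-(x)\neq\emptyset$ is symmetric). Pick any $y\in I^+(x)$. Since $\tau$ is lower semicontinuous on any Lorentzian pre-length space, the chronological past $I^-(y)$ is open, and it contains $x$. For an arbitrary $z\in I^-(y)$, causal path-connectedness of $X$ yields a future-directed timelike curve from $z$ to $y$, and from the relation $x\ll y$ we likewise obtain a future-directed timelike curve from $x$ to $y$, which reverses to a past-directed causal curve from $y$ to $x$. Concatenating these two pieces gives a piecewise causal curve from $z$ to $x$, so $z\sim x$. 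Hence $I^-(y)\subseteq [x]_\sim$, witnessing that $x$ is an interior point of its equivalence class.

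The main (and essentially only) obstacle is ensuring that the hypothesis ``$I^+(x)$ or $I^-(x)$ is non-empty'' is genuinely used to produce an open neighbourhood of $x$ inside $[x]_\sim$; without it, an isolated point with trivial chronology could be its own connected component of $\sim$-classes even in a connected space. Given this hypothesis, the argument above is completely routine, relying only on the openness of $I^\pm$ and the concatenation/reversal properties of piecewise causal curves.
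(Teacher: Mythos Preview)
Your proof is correct and follows essentially the same approach as the paper: define the equivalence classes under ``connected by a piecewise causal curve'', use the hypothesis together with causal path-connectedness to exhibit an open $I^\pm$-set around each point inside its class, and conclude by connectedness that there is only one class. The paper phrases the argument in terms of the set $R_p$ of points reachable from a fixed $p$ rather than equivalence classes, but the content is identical.
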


\begin{proof}
Two of the implications are clear, so let $X$ be connected and we claim it is piecewise causal path connected. 
Let $p\in X$ and $R_p$ be the set of all points which are connected to $p$ by piecewise causal paths. 
We claim that $R_p$ is open and in turn that $R_p = X$: By assumption, for each $q\in R_p$, there exists an $r\ll q$ (or $q\ll r$) and, as $X$ is causally path connected, a causal curve between them. 
Hence there is a piecewise causal curve from $p$ to $r$ and so $r\in R_p$. 
Similarly, each point in $I^+(r)$ (resp. $I^-(r)$) is connected to $r$ (and hence $p$) by a piecewise causal curve. So $I^+(r)\subseteq R_p$ (resp. $I^-(r)\subseteq R_p$) is an open neighbourhood of $q$ contained in $R_p$. As $q$ was arbitrary, it follows that $R_p$ is open. Furthermore, for any pair of points $p,q \in X$, the sets $R_p$ and $R_q$ are either equal or disjoint. 
Consequently, $\Set*{ R_p \given p \in X}$ gives an open partition of $X$. However, $X$ is connected, hence the partition must consist of precisely one element, namely $R_p = X$ for all $p\in X$, and $X$ is piecewise causal path-connected.
\end{proof}

It should be clear that the above lemma holds for Lorentzian length spaces and this is the context in which we will utilise the result. 
We also note that a \LpLS $X$ which is connected and causally path connected, such that for each $x\in X$ one of $I^+(x)$ or $I^-(x)$ is non-empty, is automatically \emph{sufficiently causally connected}, see \cite[Definition 3.4]{KS21}. The equivalence between path-connected and piecewise causal path-connected was also noted by \cite[Lemma 3.5]{KS21} and \cite[Lemma 3.5]{SV16} in their respective settings.
\medskip 

In the following proposition we demonstrate that the null distance on a connected Lorentzian length space satisfies all of the requirements of a distance function aside from separation of points, even if we do not assume that the associated time function is locally anti-Lipschitz (cf.\ \cite[Lemma 3.8]{SV16} for a corresponding result on spacetimes).

\begin{pop}[Null distance is a finite, continuous pseudo-metric]
\label{prop:pseudometric-null-distance}
Let $X$ be a connected Lorentzian length space with a (not necessarily locally anti-Lipschitz) time function $T$ and metric $d$. Then the null distance $d_T$, induced by $T$, is a finite pseudo-metric which is continuous (with respect to $d$). Moreover, 
\begin{equation}\label{eq: null-distance-time-function}
p \leq q \Rightarrow d_T(p,q)=T(q)-T(p)\, .
\end{equation}
\end{pop}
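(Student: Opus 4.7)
The plan is to handle four items in turn: finiteness, the pseudo-metric axioms, the identity \eqref{eq: null-distance-time-function}, and continuity with respect to $d$. The first three are essentially bookkeeping; the continuity statement will be the main obstacle.

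First I would dispose of finiteness and the pseudo-metric axioms. A \LLSn is causally path-connected, and localisability forces $I^+(x)\cup I^-(x)\neq\emptyset$ at each $x$, so Lemma~\ref{lem: path connected} applies and produces a piecewise causal curve $\gamma$ between any two points $p,q\in X$. The null length $\hat L_T(\gamma)$ is a finite sum of absolute differences of $T$, hence finite, so $d_T(p,q)<\infty$. The pseudo-metric axioms follow immediately: non-negativity is built into the definition; the constant curve at $p$ witnesses $d_T(p,p)=0$; reversing a piecewise causal curve preserves $\hat L_T$, yielding symmetry; and the concatenation of two piecewise causal curves is again piecewise causal with additive null length, giving the triangle inequality.

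Next I would establish the identity. When $p\leq q$ there is a causal curve from $p$ to $q$ by causal path-connectedness, whose null length equals $T(q)-T(p)$ by monotonicity of $T$, so $d_T(p,q)\leq T(q)-T(p)$. For the reverse inequality, the triangle inequality for $|\cdot|$ gives
\begin{equation*}
\hat L_T(\gamma) = \sum_{i} \bigl|T(\gamma(s_{i+1}))-T(\gamma(s_i))\bigr| \;\geq\; \Bigl|\sum_i \bigl(T(\gamma(s_{i+1}))-T(\gamma(s_i))\bigr)\Bigr| = T(q)-T(p)
\end{equation*}
for every piecewise causal $\gamma$ from $p$ to $q$, where the last equality uses that the telescoping sum equals $T(q)-T(p)\geq 0$.

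The hard part will be continuity. By the triangle inequality applied to $d_T$, one has $|d_T(p_n,q_n)-d_T(p,q)|\leq d_T(p_n,p)+d_T(q_n,q)$, so it suffices to prove that $d_T(p_n,p)\to 0$ whenever $p_n\to p$ in $d$. Assuming $I^+(p)\neq\emptyset$ (the other case being symmetric), I would pick a future-directed timelike curve emanating from $p$ and, given $\epsilon>0$, select a point $x^+$ on this curve with $T(x^+)-T(p)<\epsilon/4$, which is possible by continuity of $T$. Since $I^-(x^+)$ is an open neighbourhood of $p$, eventually $p_n\in I^-(x^+)$, so $p_n\ll x^+$. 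Concatenating a future-directed timelike curve from $p_n$ to $x^+$ with a past-directed timelike curve from $x^+$ to $p$ produces a piecewise causal curve whose null length equals $(T(x^+)-T(p_n))+(T(x^+)-T(p))$, which drops below $\epsilon$ for $n$ large, again by continuity of $T$. The subtle point here is that $p_n$ and $p$ need not be causally related; the detour through $x^+$ is precisely what exploits the flexibility built into the class of piecewise causal paths, and it is this flexibility that lets continuity survive without the locally anti-Lipschitz hypothesis on $T$.
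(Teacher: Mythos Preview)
Your proof is correct and follows essentially the same mathematical route as the paper: both first reduce to sufficient causal connectedness via Lemma~\ref{lem: path connected} (you invoke it explicitly, noting that localisability guarantees $I^\pm(x)\neq\emptyset$), and then argue finiteness, the pseudo-metric axioms, the identity \eqref{eq: null-distance-time-function}, and continuity via the ``detour through a nearby timelike point'' trick. The only difference is presentational: the paper outsources these four steps to \cite[Lemma~3.7, Proposition~3.8(ii), Proposition~3.9]{KS21}, whereas you reprove those cited statements inline, making your argument self-contained but not mathematically distinct.
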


\begin{proof}
By our previous discussion, every connected Lorentzian length space is sufficiently causally connected.
The fact that $d_T$ is a finite pseudo-metric then follows directly from \cite[Lemma 3.7]{KS21}. Similarly, continuity of $d_T$ and \eqref{eq: null-distance-time-function} follow from \cite[Proposition 3.9]{KS21} and \cite[Proposition 3.8.(ii)]{KS21}, respectively.
\end{proof}

The diameter of a subset in a metric space is a well known concept, which also makes sense when considering such a pseudo-metric. 
We denote the $d_T$-diameter of a set by $\diam_T$. 
The following result on the diameter of causal diamonds improves the bound given in \cite[Proposition 3.8(iv)]{KS21} to an equality.

\begin{lem}[Null distance diameter of diamonds]\label{lem:CausalDiamondDiameter}
Let $X$ be a connected Lorentz\-ian length space with a (not necessarily locally anti-Lipschitz) time function $T$ and let $p\leq q$. 
Then $\diam_T(J(p,q)) = T(q)-T(p)$. If $p \ll q$, then also $\diam_T(I(p,q)) = T(q)-T(p)$. 
\end{lem}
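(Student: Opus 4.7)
The lower bound for $\diam_T(J(p,q))$ is immediate: since $p, q \in J(p,q)$ and $p \leq q$, equation \eqref{eq: null-distance-time-function} gives $d_T(p,q) = T(q) - T(p)$, so $\diam_T(J(p,q)) \geq T(q) - T(p)$.

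For the upper bound, I would fix arbitrary $x, y \in J(p,q)$ and exhibit two natural piecewise causal curves connecting them, one routed through $p$ and one through $q$. Since $p \leq x \leq q$ and $p \leq y \leq q$, the causal path-connectedness of $X$ provides future-directed causal curves between each consecutive pair; reversing orientations where needed, we obtain a piecewise causal curve $x \to p \to y$ of null length $(T(x) - T(p)) + (T(y) - T(p))$, and a piecewise causal curve $x \to q \to y$ of null length $(T(q) - T(x)) + (T(q) - T(y))$. The decisive observation is that these two null lengths sum to exactly $2(T(q) - T(p))$, so at least one of them is bounded by $T(q) - T(p)$. Hence $d_T(x,y) \leq T(q) - T(p)$, and taking the supremum over $x,y \in J(p,q)$ completes this direction. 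Degenerate situations (for instance $x = p$) cause no trouble, as constant segments are permitted in piecewise causal curves.

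For the second claim, the upper bound transfers immediately because $I(p,q) \subseteq J(p,q)$. The lower bound requires a small approximation argument, since $p$ and $q$ themselves do not lie in $I(p,q)$. Using $p \ll q$ and causal path-connectedness, I would pick a future-directed timelike curve $\gamma \colon [0,1] \to X$ from $p$ to $q$; for any $0 < t < s < 1$, the restrictions $\gamma|_{[0,t]}$, $\gamma|_{[t,s]}$, $\gamma|_{[s,1]}$ are timelike, so $\gamma(t), \gamma(s) \in I(p,q)$ with $\gamma(t) \leq \gamma(s)$. Applying \eqref{eq: null-distance-time-function} yields $d_T(\gamma(t), \gamma(s)) = T(\gamma(s)) - T(\gamma(t))$, and letting $t \to 0^+$ and $s \to 1^-$, continuity of both $T$ and $\gamma$ forces this quantity to approach $T(q) - T(p)$, giving the matching lower bound.

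There is no serious obstacle here; the only point requiring care is the identity that makes the upper bound work, namely that the two candidate piecewise causal routes through $p$ and through $q$ have null lengths summing to $2(T(q) - T(p))$, so one of them always suffices. The rest is bookkeeping, combined with the standard fact that timelike curves fully contained in $I(p,q)$ (save endpoints) realise time separations arbitrarily close to $T(q) - T(p)$.
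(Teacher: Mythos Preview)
Your proof is correct and follows essentially the same route as the paper: the key summing-to-$2(T(q)-T(p))$ observation for the upper bound is identical (the paper phrases it via the triangle inequality for $d_T$ rather than explicit piecewise causal curves, but this is the same content), and the approximation argument for $\diam_T(I(p,q))$ is the same idea, with the paper citing \cite[Lemma~2.25]{ACS20} for the approximating sequences where you instead take points along a timelike curve from $p$ to $q$.
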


\begin{proof}
By definition of the diameter, we always have $\diam_T(J(p,q)) \geq d_T(p,q)= T(q)-T(p)$. 
Let $x,y \in J(p,q)$. 
By applying the triangle inequality for $d_T$, we obtain from \eqref{eq: null-distance-time-function}
\begin{align*}
    d_T(x,y) &\leq d_T(x,p) + d_T(p,y) = T(x) - T(p) + T(y) - T(p) \\
    d_T(x,y) &\leq d_T(x,q) + d_T(q,y) = T(q) - T(x) + T(q) - T(y)\,.
\end{align*}
Summing these up, we obtain $2d_T(x,y) \leq 2(T(q)-T(p))=2d_T(p,q)$ and the claim follows.  

In the case $p \ll q$, we still have $\diam_T(I(p,q)) \leq \diam_T(J(p,q)) = T(q) - T(p)$. Conversely, consider sequences $p_n, q_n \in I(p,q)$ so that $p_n \to p$, $q_n \to q$, $p \ll p_n$, $q_n \ll q$, which exist by \cite[Lemma 2.25]{ACS20}. Then $d_T(p_n,q_n) = T(q_n) - T(p_n) \to T(q) - T(p) = d_T(p,q)$ since $T$ is continuous.
\end{proof}

Viewing an admissible causal triangle as the union of the images of the curves corresponding to its sides, we therefore have $\diam_T(\Delta(p,q,r)) = T(r) - T(p)$.
It is in this sense that the $d_T$-diameter of an admissible causal triangle gives a topological notion of its ``size'' which is more compatible with the causal structure. 
Of course, from a metric point of view, any admissible causal triangle is degenerate with respect to $d_T$, i.e. 
\begin{equation}
\label{eq: null distance degenerate in timelike triangles}
d_T(p,r)=d_T(p,q)+d_T(q,r)\,.
\end{equation}

In the next section we shall put the key we have just constructed to use and finally prove the Toponogov Globalisation Theorem for Lorentzian length spaces. 

\section{Lorentzian Toponogov Globalisation}\label{sec:toponogov}

The main goal of this section is to prove a synthetic Lorentzian analogue of Toponogov's Globalisation theorem for lower timelike curvature bounds. This will be proven in the setting of connected, globally hyperbolic, regular Lorentzian length spaces having a time function. As previously noted, second countability is sufficient for the existence of a time function. 
\medskip

However, before we dive into the proof proper, we first require a small collection of essential lemmata. To begin, recall that globally hyperbolic \LLSs $X$ are geodesic with finite and continuous time separation $\tau$ \cite[Theorems 3.28 and 3.30]{KS18}. Thus, in this case, \ref{def-cb-ang.item1} and \ref{def-cb-ang.item2} from Definition \ref{def-cb-ang} (curvature bounds in the sense of angle comparison) hold for $U=X$, i.e. globalisation of these properties is automatic for such spaces. We will also use the geodesic nature of globally hyperbolic Lorentzian length spaces implicitly throughout the remainder of this section, to avoid concerns regarding the existence of distance realisers.
\medskip 

Our next result is a slight adaptation of the Lebesgue Number Lemma, which allows us to properly configure coverings of causal diamonds by small and well behaved timelike diamonds.

\begin{lem}[Lebesgue Number Lemma, Lorentzian version]
\label{lem: Lebesgue number lemma}
Let $X$ be a connected, globally hyperbolic, Lorentzian length space with $T: X \to \R$ a time function on $X$ and let $d_T$ be the associated null distance. Consider any causal diamond $J(x,y)$ in $X$ and let $\{D_i\}_{i=1}^n$ be an open cover of $J(x,y)$ consisting of timelike diamonds.\footnote{Such an open cover must exist by \cite[Corollary 3.6]{Rot22} and \cite[Theorem 3.26.(v)]{KS18}.} Then there exists an $\varepsilon >0$ such that any causal (and hence any timelike) diamond with $d_T$-diameter less than $\varepsilon$ contained in $J(x,y)$ is also contained in one element of the covering.    
\end{lem}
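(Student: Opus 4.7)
The plan is to argue by contradiction, essentially repeating the classical Lebesgue number argument with the compactness of $J(x,y)$ playing the role of compactness in the ambient metric space, and using the delicate interplay between the null distance $d_T$, the time function $T$, and the causal structure $\leq$ to overcome the fact that $d_T$ is only a pseudo-metric and need not detect $d$-closeness.

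Suppose no such $\varepsilon$ exists and write the cover as $\{D_i\}_{i=1}^N$. Then for each $k\in\mathbb{N}$ there is a causal diamond $J(p_k,q_k)\subseteq J(x,y)$ with $\diam_T(J(p_k,q_k))<1/k$ that fails to be contained in any single $D_i$, and so for every $i\in\{1,\dots,N\}$ we may pick a witness $w_k^i\in J(p_k,q_k)\setminus D_i$. Since $J(x,y)$ is compact by global hyperbolicity, we pass to a subsequence along which $p_k\to p^*$, $q_k\to q^*$, and, for each of the finitely many indices $i$, $w_k^i\to w^i_*$. Lemma~\ref{lem:CausalDiamondDiameter} gives $T(q_k)-T(p_k)=\diam_T(J(p_k,q_k))<1/k$, so continuity of $T$ forces $T(p^*)=T(q^*)$. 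Combining the closedness of the causal relation (inherent to globally hyperbolic Lorentzian length spaces) with $p_k\leq q_k$ yields $p^*\leq q^*$; strict monotonicity of the time function then upgrades this to $p^*=q^*$. Applying the same reasoning to the chain $p_k\leq w_k^i\leq q_k$ gives $w^i_*=p^*$ for every $i$.

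Finally, since $p^*\in J(x,y)$, the cover supplies an index $i_0$ with $p^*\in D_{i_0}$. As $D_{i_0}$ is open and $w_k^{i_0}\to p^*$, we have $w_k^{i_0}\in D_{i_0}$ for all sufficiently large $k$, contradicting the choice $w_k^{i_0}\notin D_{i_0}$.

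I expect the main technical obstacle to be justifying $p^*=q^*$: one must combine $d_T$-vanishing with the closedness of $\leq$ and the strict monotonicity of $T$. Without all three of these inputs the argument would fail, precisely because $d_T$ being a pseudo-metric means $d_T(p^*,q^*)=0$ does not by itself imply $p^*=q^*$. Everything else reduces to a routine compactness exercise plus openness of the timelike diamonds in the cover.
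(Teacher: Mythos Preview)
Your proof is correct, and it takes a genuinely different route from the paper's. The paper proceeds constructively: it defines a function $f(p)=\max_i d_T\bigl(p,\,C_i\cap(J^+(p)\cup J^-(p))\bigr)$ with $C_i=J(x,y)\setminus D_i$, argues that $f$ is everywhere positive and continuous on the compact set $J(x,y)$, and then sets $\varepsilon=\min f$. The intersection with $J^+(p)\cup J^-(p)$ is precisely what forces $d_T$ to distinguish $p$ from the complements, compensating for the pseudo-metric defect. Your argument instead runs the classical sequential contradiction: extract limits $p^*=q^*=w^i_*$ from a sequence of bad diamonds and contradict openness of some $D_{i_0}$. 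The collapse $p^*=q^*$ is where you confront the pseudo-metric issue, and you resolve it exactly as you say---causal closedness plus strict monotonicity of $T$---which the paper also invokes freely elsewhere (e.g.\ in the proofs of Propositions~\ref{prop:negativeAngles} and~\ref{prop:catsCradle}). Your approach is more elementary and sidesteps the somewhat delicate point of verifying continuity of $f$ (the set $C_i\cap(J^+(p)\cup J^-(p))$ varies with $p$), at the cost of being non-constructive.
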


\begin{proof}
The main difference when comparing to the original version of the Lebesgue number lemma is that $d_T$ is only a finite, continuous, pseudo-metric in general, as a result of Proposition \ref{prop:pseudometric-null-distance}. The causal structure of diamonds and its interplay with the null distance will be crucial in the proof. 

Firstly, if $J(x,y) \subseteq D_i$ for some $i$ then we can choose $\varepsilon$ arbitrary and we are done. 
Otherwise, denote by $C_i \coloneqq J(x,y) \setminus D_i$ the complement of $D_i$ in $J(x,y)$. Define a function $f: J(x,y) \to \R$ via 
\begin{equation}
f(p)=\max_{i\in\{ 1,2,\ldots, n\}} d_T(p, C_i \cap (J^+(p) \cup J^-(p))) \, .
\end{equation}
Note that the infimum in the definition of $d_T(p, C_i \cap (J^+(p) \cup J^-(p)))$ is attained as $C_i \cap (J^+(p) \cup J^-(p))$ is a closed subset of $J(x,y)$ and hence compact. We now show that $f(p) \in (0, \infty)$ for all $p$. 

If $f(p)$ were $0$ for some $p \in J(x,y)$, then for all $i$ there is a point $p_i \in C_i$ with $d_T(p, p_i) = 0$ such that $p_i \in J^+(p) \cup J^-(p)$. From this, we infer that $p = p_i$ and so $p \in C_i$ for all $i$.
Equivalently, $p \notin D_i$ for all $i$.
As the $D_i$ cover $J(x,y)$, we arrive at the contradiction $p \notin J(x,y)$. 
If $f(p)=\infty$ for some $p$, then there exists some $i$ such that $C_i \cap (J^+(p) \cup J^-(p)) = \emptyset$. Indeed, as all of these sets are compact and the null distance is finite valued, the maximum of finitely many infima can only be infinite if (at least) one of the sets is empty. 
Thus, $J(x,y) \cap (J^+(p) \cup J^-(p)) \subseteq D_i$, and hence $x,y \in D_i$. As $D_i$ is a timelike diamond and therefore causally convex, this implies $J(x,y) \subseteq D_i$, which we treated separately.

As the sets $C_i \cap (J^+(p) \cup J^-(p))$ are all compact and the null distance is continuous, it follows that $f$ is continuous and hence attains its minimum value. Consequently, set $\varepsilon \coloneqq \min_{p \in J(x,y)} f(p) >0$. Now let $p,q \in J(x,y)$ with $p \leq q$ and $\diam_T(J(p,q))=d_T(p,q) < \varepsilon$. As $f(p) \geq \varepsilon$, there exists $i$ such that $d_T(p, C_i \cap (J^+(p) \cup J^-(p)) \geq \varepsilon$. Then clearly, $p \notin C_i$. Furthermore, $p \leq q$ and $d_T(p,q) < \varepsilon$, hence also $q \notin C_i$. Thus, $p,q \in D_i$ and by the causal convexity of diamonds, also $J(p,q) \subseteq D_i$.
\end{proof}

We now turn to proving the most essential synthetic Lorentzian tool required for the proof of the Globalisation Theorem. Recall that the so-called Gluing Lemma for triangles with upper curvature bounds, \cite[Lemma 4.3.1, Corollary 4.3.2]{BR22},  roughly states that when two subtriangles satisfy the same curvature inequalities, then a large triangle formed by combining the two must also satisfy that curvature bound. The Gluing Lemma (and hence the Lorentzian analogue of the Reshetnyak Gluing Theorem \cite[Theorem 5.2.1]{BR22}) is not valid in full generality for lower curvature bounds, as not all of the inequalities in the Alexandrov Lemma \ref{lem: alexlem future} point in the same direction in this case.
\medskip

However, we propose the following result, in the spirit of the Gluing Lemma, under lower curvature bounds. In essence, if the angle condition fails to hold at a vertex in a timelike triangle then, upon splitting the triangle into two timelike subtriangles along one of the adjacent sides, then at least one angle condition must fail in one of the two subtriangles. In particular, the failing angle condition(s) will either be at the original vertex (viewed as part of a subtriangle), or at the point at which we split the adjacent side. 

\begin{lem}[Gluing Lemma for timelike triangles, lower curvature bounds]
\label{lem:triangleGluingCBB}
Let $X$ be a globally hyperbolic, regular Lorentzian length space with curvature bounded below by $K\in\mathbb{R}$ in the sense of angle comparison.
Let $\Delta(p,q,r)$ be a timelike triangle in $X$ (which is not necessarily contained in a comparison neighbourhood), where the sides are given by distance realisers $\alpha$ from $p$ to $r$, $\beta$ from $p$ to $q$ and $\gamma$ from $q$ to $r$, respectively.
Let $\Delta(\tp,\tq,\trr)$ be a comparison triangle for $\Delta(p,q,r)$ and assume that the angle condition fails to hold at $p$ in $\Delta(p,q,r)$, i.e. $\ma_p(\alpha, \beta) < \ma_{\tp}(\tq,\trr)$. 

Let $x$ be a point on $\beta$. 
Then at least one of the following three angle conditions fails to hold: the angle conditions at $x$ in $\Delta(p,x,r)$, at $p$ in $\Delta(p,x,r)$ and at $x$ in $\Delta(x,q,r)$ (see Figure \ref{fig: Gluing Lemma constellation}).
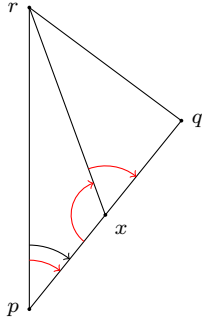
\begin{figure}
\begin{center}
\begin{tikzpicture}
\draw (0,0) -- (0,4) -- (2,2.5) -- (0,0);
\draw (0,4) -- (1,1.25);
\begin{scriptsize}
\coordinate [circle, fill=black, inner sep=0.5pt, label=180: {$r$}] (r) at   (0,4);
\coordinate [circle, fill=black, inner sep=0.5pt, label=0: {$q$}] (q) at   (2,2.5);
\coordinate [circle, fill=black, inner sep=0.5pt, label=300: {$x$}] (x) at   (1,1.25);
\coordinate [circle, fill=black, inner sep=0.5pt, label=180: {$p$}] (p) at   (0,0);
\pic [draw=black,<-, angle radius=8.5mm, angle eccentricity=.5] {angle = q--p--r};
\pic [draw=red,<-, angle radius=6.5mm, angle eccentricity=.5] {angle = x--p--r};
\pic [draw=red,<-, angle radius=6.5mm, angle eccentricity=.5] {angle = q--x--r};
\pic [draw=red,<-, angle radius=4.5mm, angle eccentricity=.5] {angle = r--x--p};
\end{scriptsize}
\end{tikzpicture}
\end{center}
\caption{If the angle condition at $p$ in $\Delta(p,q,r)$ fails to hold (in black), then at least one of the three angles conditions (in red) at $x$ or $p$ in the smaller triangles fail to hold.}
\label{fig: Gluing Lemma constellation}
\end{figure}

An analogous statement holds if $x$ is on $\alpha$ and timelike related to $q$, or if the angle condition initially failed at $r$ (and the subdividing point $x$ is on $\gamma$ or on $\alpha$ and timelike related to $q$) or at $q$ (and $x$ is on either $\beta$ or $\gamma$), instead of $p$. 
\end{lem}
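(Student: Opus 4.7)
The plan is to proceed by contradiction: assuming all three angle conditions hold, I will derive that the angle condition at $p$ in $\Delta(p,q,r)$ also holds, contradicting the hypothesis. To set up, I form the Alexandrov gluing comparison situation of Proposition \ref{lem: alexlem future}: a comparison triangle $\bar{\Delta}_1 = \Delta(\bar{p}, \bar{x}, \bar{r})$ for $\Delta(p,x,r)$ and $\bar{\Delta}_2 = \Delta(\bar{x}, \bar{q}, \bar{r})$ for $\Delta(x,q,r)$ in $\lm{K}$, with $\bar{p}$ and $\bar{q}$ on opposite sides of $[\bar{x}, \bar{r}]$, together with a comparison triangle $\tilde{\Delta} = \Delta(\tilde{p}, \tilde{q}, \tilde{r})$ for the full $\Delta(p,q,r)$ carrying the comparison point $\tilde{x}$ for $x$ on $[\tilde{p}, \tilde{q}]$. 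The requisite size bounds are inherited from those on $\Delta(p,q,r)$, and $\tilde{x}$ exists because $x$ sits on the distance realiser $\beta$.

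The pivotal step is to consolidate the two angle-condition assumptions at $x$. Since $\beta$ is a distance realiser through $x$ and $X$ is strongly causal and locally causally closed (being globally hyperbolic), Proposition \ref{pop: equal angles along geodesic} gives the balanced segment identity
\begin{equation*}
\ma_x([x,p], [x,r]) \;=\; \ma_x([x,q], [x,r]) \;=:\; \omega_x.
\end{equation*}
At $x$ in $\Delta(p,x,r)$ the vertex is not a time-endpoint, so the signed angle condition unwraps to $\omega_x \leq \tilde{\ma}_{\bar{x}}^K(\bar{p}, \bar{r})$; at $x$ in $\Delta(x,q,r)$ the vertex is a time-endpoint, so unwrapping flips the inequality to $\omega_x \geq \tilde{\ma}_{\bar{x}}^K(\bar{q}, \bar{r})$. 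Chaining, $\tilde{\ma}_{\bar{x}}^K(\bar{q}, \bar{r}) \leq \tilde{\ma}_{\bar{x}}^K(\bar{p}, \bar{r})$, which is precisely the convex situation at $\bar{x}$, and Proposition \ref{lem: alexlem future} then yields $\tau(x, r) \leq \tau(\tilde{x}, \tilde{r})$.

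The final step invokes the law of cosines monotonicity (Corollary \ref{cor:LOC Mono}) at $\bar{p}$ and $\tilde{p}$: both vertices have adjacent sides of lengths $\tau(p,x)$ (short) and $\tau(p,r)$ (longest) and opposite short sides of lengths $\tau(x,r)$ and $\tau(\tilde{x}, \tilde{r})$ respectively; since $\tau(x,r) \leq \tau(\tilde{x}, \tilde{r})$, the monotonicity statement gives $\tilde{\ma}_{\bar{p}}^K(\bar{x}, \bar{r}) \geq \tilde{\ma}_{\tilde{p}}^K(\tilde{x}, \tilde{r}) = \tilde{\ma}_{\tilde{p}}^K(\tilde{q}, \tilde{r})$, the last equality because $\tilde{x}$ lies on the side $[\tilde{p}, \tilde{q}]$ and so the hinge at $\tilde{p}$ is unchanged. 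The assumed angle condition at $p$ in $\Delta(p,x,r)$ (a time-endpoint, where signed inequalities again flip on unsigning) reads $\ma_p(\alpha, [p,x]) \geq \tilde{\ma}_{\bar{p}}^K(\bar{x}, \bar{r})$, and $\ma_p(\alpha, [p,x]) = \ma_p(\alpha, \beta)$ because upper angles depend only on the germs of their defining curves. Concatenating yields $\ma_p(\alpha, \beta) \geq \tilde{\ma}_{\tilde{p}}^K(\tilde{q}, \tilde{r})$, contradicting the hypothesised failure.

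The main obstacle is disciplined sign bookkeeping: in contrast to the upper curvature setting treated by the Gluing Lemma of \cite{BR22}, the signed comparison inequalities and the law of cosines monotonicity point in opposite directions at time-endpoints versus interior vertices, so it is only the balanced segment identity at $x$ that collapses the two conditions there into the single convexity statement needed to invoke Alexandrov's Lemma. For the remaining cases in the statement the argument is structurally identical, with the ``across'' version of Alexandrov's Lemma (cf.\ Figure \ref{fig: alexlem across concave}) replacing the ``future'' version whenever the subdividing point $x$ lies on the longest side $\alpha$.
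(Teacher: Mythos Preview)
Your proof is correct and follows essentially the same approach as the paper: both arguments combine the balanced segment identity at $x$ (Proposition \ref{pop: equal angles along geodesic}), the future version of Alexandrov's Lemma, and law of cosines monotonicity, with the only difference being organisational---you package all three assumptions into a single contradiction, whereas the paper assumes the condition at $p$ in $\Delta(p,x,r)$ holds and then derives a contradiction from the two conditions at $x$.
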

\begin{proof}
We prove the result for the case where the angle condition fails to hold at $p$ in $\Delta(p,q,r)$ and $x$ is on $\beta$. Denote a distance realiser (which exists since $X$ is globally hyperbolic) from $x$ to $r$ by $\eta$.
Denote by $\beta_-$ and $\beta_+$ the parts of $\beta$ which go from $x$ to $p$ and from $x$ to $q$, respectively. 
Consider comparison triangles $\Delta(\bp,\bx,\br)$ and $\Delta(\bx,\bq,\br)$ for $\Delta(p,x,r)$ and $\Delta(x,q,r)$, respectively, as well as a comparison triangle $\Delta(\tp,\tq,\trr)$ for $\Delta(p,q,r)$. 
Assume that the angle condition at $p$ in $\Delta(p,x,r)$  holds, i.e. $\ma_p(\alpha, \beta_-) \geq \ma_{\bp}(\bx,\br)$, otherwise we are done. We now show that the angle condition at $x$ in $\Delta(p,x,r)$ or at $x$ in $\Delta(x,q,r)$ must fail. 
Let $\tx$ be the comparison point for $x$ in $\Delta(\tp,\tq,\trr)$ and consider the subtriangle $\Delta(\tp,\tx,\trr)$. 
$\Delta(\bp,\bx,\br)$ and $\Delta(\tp,\tx,\trr)$ have two sides of equal length, and for the angles at $\bp$ and $\tp$ we know
\begin{equation}
\ma_{\bp}(\bx,\br) \leq \ma_p(\alpha, \beta) < \ma_{\tp}(\tq,\trr) = \ma_{\tp}(\tx,\trr) \, .
\end{equation}
Thus, law of cosines monotonicity gives $\tau(x,r)=\tau(\bx,\br) > \tau(\tx,\trr)$
and so, by the Alexandrov Lemma \ref{lem: alexlem future},  
the comparison triangles $\Delta(\bp,\bx,\br)$ and $\Delta(\bx,\bq,\br)$ form a concave situation, i.e.
\begin{equation}
\label{eq: gluing lemma ineq}
\ma_{\bx}(\bp,\br) < \ma_{\bx}(\bq,\br) \, .     
\end{equation}
Moreover, by Proposition \ref{pop: equal angles along geodesic}, we have $\ma_x(\beta_-,\eta)=\ma_x(\eta,\beta_+)$. 
If the angle condition were to hold both at $x$ in $\Delta(p,x,r)$ and at $x$ in $\Delta(x,q,r)$, then we would have
\begin{equation}
\ma_{\bx}(\bp,\br) \geq \ma_x(\beta_-,\eta) = \ma_x(\eta,\beta_+) \geq \ma_{\bx}(\bq,\br) \, , 
\end{equation}
a contradiction to \eqref{eq: gluing lemma ineq}. Hence, the angle condition must fail at $x$ either in $\Delta(p,x,r)$ or $\Delta(x,q,r)$, if it does not fail at $p$ in $\Delta(p,x,r)$.

For the remaining cases, the proof is similar, upon using the appropriate version of the Alexandrov Lemma (cf.\ \cite[Lemma 4.2.1]{BR22} or \cite[Proposition 2.42]{BORS23}). 
\end{proof}

As should be clear from the proof, this gluing property also holds for strongly causal, locally causally closed, regular \LpLSs with curvature bounded below in the sense of angle comparison.

Using the previous lemmata, we can now prove two results which, when taken together, allow us to prove our main theorem.  One key difficulty in generalising globalisation to the Lorentzian setting is that splitting a timelike triangle along the longest side does not, in general, produce two timelike triangles. This issue is handled by the first result, which demonstrates that if any angle fails, it is always possible to assume that an angle of type $\sigma = +1$ fails. 

\begin{pop}[Failing angles can be assumed to be of type $\sigma=+1$]\label{prop:negativeAngles}
Let $X$ be a connected, globally hyperbolic, regular Lorentzian length space with time function $T$ and curvature bounded below by $K\in \mathbb{R}$ in the sense of angle comparison. Let $0 < \epsilon < 1$. Let $\Delta = \Delta(p,q,r)$ be a timelike triangle in $X$ which satisfies the size bounds for $K$ and for which the angle condition fails at some vertex. 

Suppose that the angle condition holds at each angle in every timelike triangle $\Delta(p', q', r')\subseteq J(p,r)$ with $d_T(p', r') \leq (1-\epsilon) d_T(p,r)$. 
Then there is at least one timelike triangle $\Delta(p'', q'', r'') \subseteq J(p,r)$ such that the angle condition fails at $q''$, i.e., $\ma_{q''}(p'', r'') > \ma_{\bq''}(\bp'',\br'')$. 
\end{pop}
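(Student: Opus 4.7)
The plan is to apply the Gluing Lemma for lower curvature bounds (Lemma~\ref{lem:triangleGluingCBB}) to subdivide $\Delta(p,q,r)$, using the smallness hypothesis to eliminate every failure candidate except one located at a middle vertex of a subtriangle. If the failing angle in $\Delta(p,q,r)$ is already at the middle vertex $q$ then the conclusion is immediate with $(p'',q'',r'')=(p,q,r)$; by the time-reversal symmetry of all relevant definitions, we may therefore assume the angle condition fails at the time-endpoint $p$. Set $M:=d_T(p,r)=T(r)-T(p)$ and $h:=T(q)-T(p)$.

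\emph{Direct split.} Suppose first that $h \geq \epsilon M$. The point $x$ on $[p,q]$ with $T(x)=T(p)+\epsilon M$ exists and is unique by continuity and strict monotonicity of $T$ along the distance realiser $[p,q]$. Splitting $\Delta(p,q,r)$ at $x$, the Gluing Lemma asserts that at least one of three angle conditions must fail: at $x$ in $\Delta(p,x,r)$ (where $x$ is the middle vertex, $\sigma=+1$), at $p$ in $\Delta(p,x,r)$, or at $x$ in $\Delta(x,q,r)$ (where $x$ is the earliest vertex, $\sigma=-1$). But $\Delta(x,q,r)\subseteq J(p,r)$ has $d_T$-diameter $T(r)-T(x)=(1-\epsilon)M$, so the hypothesis rules out the last option. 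The first option yields the desired conclusion with $(p'',q'',r'')=(p,x,r)$; if instead the second option holds, we obtain a new bad triangle $\Delta(p,x,r)$ of the same $d_T$-height $M$ but with middle-vertex time separation $h_{\mathrm{new}}=\epsilon M$, strictly less than $h$.

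\emph{Dual split.} Suppose instead that $h \leq (1-\epsilon)M$. We aim to choose a point $y$ on $[p,r]$ with $q\ll y$ and $T(y)\leq T(p)+(1-\epsilon)M$, which will lie in the open set $I^+(q)\cap [p,r]$ provided the causal geometry cooperates (see below). Splitting at $y$, the form of the Gluing Lemma for a split of $\alpha$ with $q\ll x$ produces subtriangles $\Delta(p,q,y)$ and $\Delta(q,y,r)$, with failure in one of: at $y$ in $\Delta(p,q,y)$ (latest vertex, $\sigma=-1$), at $p$ in $\Delta(p,q,y)$ (earliest vertex, $\sigma=-1$), or at $y$ in $\Delta(q,y,r)$ (middle vertex, $\sigma=+1$). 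The subtriangle $\Delta(p,q,y)\subseteq J(p,r)$ has $d_T$-diameter $T(y)-T(p)\leq (1-\epsilon)M$, so by hypothesis neither of its angles can fail, eliminating the first two options; the middle-vertex failure at $y$ in $\Delta(q,y,r)$ must therefore hold, and we are done with $(p'',q'',r'')=(q,y,r)$.

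\emph{Combination and main obstacle.} For $\epsilon\leq 1/2$, the two splittings together cover the full range of $h$: if $h\geq\epsilon M$ one applies the direct split, and the bad triangle it outputs (if any) has $h_{\mathrm{new}}=\epsilon M\leq (1-\epsilon)M$, placing it exactly in the regime where the dual split applies. For $\epsilon>1/2$ one iterates the direct split, strictly decreasing $h$ at each step, until $h$ enters the regime of the dual split. The principal technical difficulty is establishing the existence of the splitting point $y$ on $[p,r]$ satisfying both $q\ll y$ and $T(y)\leq T(p)+(1-\epsilon)M$: while $I^+(q)\cap [p,r]$ is open and accumulates at $r$, its minimum $T$-value is governed by the entry time of the causal curve $[p,r]$ into $I^+(q)$, which is not a priori bounded by $T(p)+(1-\epsilon)M$. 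Controlling this entry time via continuity of $T$ along $[p,r]$ and the smallness of $h$ after iteration, and verifying that the iteration always terminates, constitutes the bulk of the detailed proof.
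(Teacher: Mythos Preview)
Your direct-split step is correct, and you are right to flag the existence of $y$ in the dual split as the crux, but the resolution you sketch does not work. The iteration you propose does not make $h$ small: one application of the direct split sets $h_{\mathrm{new}}=\epsilon M$, and any further application chooses the splitting point at the \emph{same} $T$-level $T(p)+\epsilon M$, so $h$ is frozen at $\epsilon M$. For $\epsilon>1/2$ your scheme therefore never reaches the dual-split regime at all, and for $\epsilon\le 1/2$ you are left with a single fixed $x$ at height $\epsilon M$ and no mechanism to force the entry time of $[p,r]$ into $I^+(x)$ below $(1-\epsilon)M$. Continuity of $T$ along $[p,r]$ buys nothing here: that entry time is governed by the causal geometry of $I^+(x)$ relative to the distance realiser $[p,r]$, and there is no a priori bound in terms of $h$.

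The paper's proof uses a genuinely different iteration. It bisects $[p_{n-1},q_{n-1}]$ at its $d_T$-midpoint and, crucially, does \emph{not} try to eliminate the third case of the Gluing Lemma at this stage: whichever of the two lower subtriangles inherits a failing angle at its earliest vertex is renamed $\Delta(p_n,q_n,r)$. This produces sequences $p_n,q_n$ on $[p,q]$ with $d_T(p_n,q_n)\to 0$, hence a common limit $p^*$. Only then is the long side split: set $r_n'$ to be the intersection of $[p_n,r]$ with $\partial J^+(q_n)$. Compactness of $J(p,r)$ and causal closedness give a subsequential limit $r^*$ with $p^*\le r^*$ and $\tau(p^*,r^*)=0$; since $p^*,r^*,r$ lie on a common distance realiser with $\tau(p^*,r)>0$, \emph{regularity} forces $p^*=r^*$. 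This regularity argument is the missing idea: it is what guarantees that for large $n$ one can pick $r_n$ on $[p_n,r]$ just past $r_n'$ with $d_T(p_n,r_n)\le(1-\epsilon)M$, so that $\Delta(p_n,q_n,r_n)$ is covered by the hypothesis and the Gluing Lemma forces the remaining failure to sit at the middle vertex $r_n$ of $\Delta(q_n,r_n,r)$.
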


\begin{proof}
Without loss of generality, assume that the angle condition in $\Delta$ fails at $p$ (the case where it fails at $r$ is analogous under reversal of the time orientation, while if it fails at $q$ the result is trivially satisfied).

Splitting the side $[p,q]$ into two pieces at some $x \in [p,q]$, say the $d_T$-midpoint, by Lemma \ref{lem:triangleGluingCBB} we get that either an angle condition fails at $x$ in $\Delta(p,x,r)$, in which case the result follows, or at either $p$ in $\Delta(p,x,r)$ or $x$ in $\Delta(x,q,r)$. In either of the two latter cases, we rename the triangle where the angle condition fails by $\Delta(p_1, q_1, r)$, with the angle condition now failing at $p_1$. (Both triangles may have a failing angle condition, in which case we may simply pick one at random.)
This procedure can be repeated arbitrarily many times (see Figure \ref{fig: final steps labeling toponogov}) and, if no positive angle fails at any stage, this will result in a sequence of pairs  $p_n\ll q_n$ on the side $[p,q]$ such that the angle conditions in $\Delta(p_n,q_n,r)$ fail at $p_n$. If the new subdivision point (which is either relabelled to $p_n$ or $q_n$)    
is always chosen to be the midpoint of the side $[p_{n-1},q_{n-1}]$ in the $d_T$ metric, then $d_T(p_n, q_n) \to 0$ and, since these points lie on the distance realiser $[p,q]$, it must be the case that $p_n$ and $q_n$ have a common limit point $p^* \in [p,q] $ with $p_n\nearrow p^*$ and $q_n\searrow p^*$.

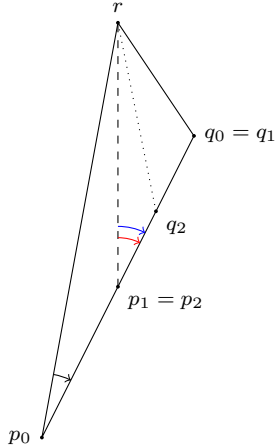
\begin{figure}
\begin{center}
\begin{tikzpicture}
\draw (0,0) -- (2,4) -- (1,5.5) -- (0,0);
\draw[dashed] (1,2) -- (1,5.5);
\draw[dotted] (1.5,3) -- (1,5.5);

\begin{scriptsize}
\coordinate [circle, fill=black, inner sep=0.5pt, label=180: {$p_0$}] (p0) at (0,0); 
\coordinate [circle, fill=black, inner sep=0.5pt, label=0: {$q_0=q_1$}] (q0) at (2,4); 
\coordinate [circle, fill=black, inner sep=0.5pt, label=90: {$r$}] (r0) at (1,5.5);

\coordinate [circle, fill=black, inner sep=0.5pt, label=330: {$p_1=p_2$}] (p1) at (1,2); 
\coordinate [circle, fill=black, inner sep=0.5pt, label=330: {$q_2$}] (q1) at (1.5,3); 

\pic [draw, <-, angle radius=8.5mm, angle eccentricity=2.5] {angle = q0--p0--r0};

\pic [draw, <-, color=red, angle radius=6.5mm, angle eccentricity=1.5] {angle = q0--p1--r0};

\pic [draw, <-, color=blue, angle radius=8mm, angle eccentricity=1.5] {angle = q0--p1--r0};
\end{scriptsize}

\end{tikzpicture}
\caption{The angle condition (black) originally fails to hold at $p_0$ in $\Delta(p_0,q_0,r)$. After the first subdivision (dashed), the angle condition (red) fails to hold at $p_1$ in $\Delta(p_1,q_1,r)$. After the second subdivision (dotted), the angle condition (blue) fails to hold at $p_2$ in $\Delta(p_2,q_2,r)$.}
\label{fig: final steps labeling toponogov}
\end{center}
\end{figure}

If $d_T(p^*, r) < (1-\epsilon) d_T(p,r)$, then $d_T(p_n, r) \leq (1-\epsilon) d_T(p,r)$ for large $n$ so that $\Delta(p_n, q_n, r)$ is already sufficiently small that it cannot have a failing angle, yielding a contradiction. However, this inequality need not hold and it may be necessary to split the long side $[p_n, r]$ in the following manner.

Let $r_n'$ be the point on the intersection of some distance realiser $[p_n, r]$ with $\partial J^+(q_n)$ (by regularity, this point of intersection is unique). 
By compactness of $J(p,r)$, we may, after passing to a subsequence if necessary, assume that $r_n'$ is convergent with $r_n' \to r^*$. 
By construction, $\tau(q_n,r_n')=0, q_n \leq r_n'$, and hence by continuity of $\tau$ and the closedness of the causal relation, we get $\tau(p^*,r^*)=0, p^* \leq r^*$. 
Moreover, we have $\tau(p_n,r)=\tau(p_n,r_n')+\tau(r_n',r)$ and hence again by continuity, $0<\tau(p^*,r)=\tau(p^*,r^*)+\tau(r^*,r)$, so the three points lie on a distance realiser. 
By regularity, the segment $[p^*, r]$ is timelike, so $\tau(p^*,r^*)=0 \implies p^*=r^*$.

For sufficiently large $n$, then, we may take a point $r_n$ slightly to the future of $r_n'$ on the segment $p_n r$. Then $p_n$, $q_n$ and $r_n$ are all so close to $p^*$ that the timelike triangle $\Delta(p_n,q_n,r_n)$ has $d_T(p_n, r_n) \leq (1-\epsilon) d_T(p,r)$. 
Splitting the triangle $\Delta(p_n,q_n,r)$, which has an angle condition failing at $p_n$, through $q_nr_n$ using Lemma \ref{lem:triangleGluingCBB}, results either in an angle condition failing at $r_n$ in $\Delta(q_n,r_n,r)$, so that the result follows, or at $p_n$ or $r_n$ in  $\Delta(p_n,q_n,r_n)$, which is not possible since  $\Delta(p_n,q_n,r_n)$ is sufficiently small in the $d_T$ metric.
\end{proof}

Following the work of Plaut across two papers \cite{Pla91,Pla96}, Lang and Schroeder \cite{LS13} provided a ``cat's cradle" construction for use in proving Toponogov's theorem for metric length spaces. Independently of and in parallel to this, Petrunin \cite{Pet16} also derived a similar, elegant scheme. In our second result, we demonstrate that this construction can also be used in the Lorentzian setting, despite the challenge posed by the fact that triangles with short side lengths (in $\tau$) need not be small topologically. This rules out the failure of angles of type $\sigma = +1$, provided that a collection of smaller triangles obey the angle condition at each of their vertices, essentially completing the proof.

\begin{pop}[Cat's cradle]\label{prop:catsCradle}
Let $X$ be a connected, globally hyperbolic, regular Lorentzian length space with time function $T$ and curvature bounded below by $K\in \mathbb{R}$ in the sense of angle comparison. Let $0 < \epsilon < \frac12$ and let $\Delta = \Delta(p,q,r)$ be a timelike triangle in $X$ which satisfies the size bounds for $K$. 

Suppose that the angle condition holds at each angle in every timelike triangle $\Delta(p', q', r')\subseteq J(p,r)$ with $d_T(p', r') \leq (1-\epsilon) d_T(p,r)$. 
Then the angle condition also holds at $q$ in $\Delta$. 
\end{pop}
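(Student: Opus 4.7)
Proof proposal. We argue by contradiction: suppose the angle condition at $q$ in $\Delta(p,q,r)$ fails, so
$$\omega := \ma_q([q,p],[q,r]) > \tilde\omega := \tilde\ma_q^K(p,r),$$
both of sign $+1$. The plan is a cat's cradle construction in the spirit of Lang--Schroeder~\cite{LS13}: transfer the failing angle from $q$ to another vertex of a (still large) triangle, iterate, and ultimately localise the failure inside a timelike sub-triangle of $d_T$-diameter at most $(1-\epsilon) d_T(p,r)$, contradicting the hypothesis.

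\emph{Reduction and initial subdivision.} After time reversal if necessary, assume $T(q)-T(p) \leq \tfrac12 d_T(p,r)$. Since $\epsilon<\tfrac12$, there is an interior point $r_s\in[q,r]$ with $T(r_s)-T(p)=(1-\epsilon)d_T(p,r)$, so the timelike sub-triangle $\Delta(p,q,r_s)$ has $d_T$-diameter exactly $(1-\epsilon)d_T(p,r)$ and satisfies the angle condition at all three vertices. At $q$ (sign $+1$), since the upper angle depends only on the germs of the two curves, $\omega\leq\tilde\ma_q^K(p,r_s)$; at $r_s$ (sign $-1$), $\ma_{r_s}([r_s,p],[r_s,q])\geq\tilde\ma_{r_s}^K(p,q)$.

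\emph{Transferring the failure via Alexandrov concavity.} Place $\tilde r_s\in[\tilde q,\tilde r]$ in a comparison triangle $\tilde\Delta(\tilde p,\tilde q,\tilde r)$ for $\Delta$ with $\tau(\tilde q,\tilde r_s)=\tau(q,r_s)$. Since $\tilde r_s$ shares its ray from $\tilde q$ with $\tilde r$, we have $\tilde\ma_{\tilde q}^K(\tilde p,\tilde r_s)=\tilde\omega$. The inequality $\omega\leq\tilde\ma_q^K(p,r_s)$, together with the law of cosines monotonicity (Corollary~\ref{cor:LOC Mono}) applied to the two sub-triangle comparisons $\Delta(\tilde p,\tilde q,\tilde r_s)$ and $\Delta(\bar p,\bar q,\bar r_s)$ (which share the sides of length $\tau(p,q),\tau(q,r_s)$), gives $\tau(p,r_s)>\tau(\tilde p,\tilde r_s)$. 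This is the concave case of Alexandrov's Lemma (Proposition~\ref{lem: alexlem future}), which yields $\tilde\ma_{r_s}^K(p,q)>\tilde\ma_{r_s}^K(p,r)$. Combining with the above angle-condition inequality at $r_s$ and the balanced-segments property (Proposition~\ref{pop: equal angles along geodesic}) at $r_s$ on $[q,r]$ — which gives $\ma_{r_s}([r_s,p],[r_s,q])=\ma_{r_s}([r_s,p],[r_s,r])=:\omega'$ — we deduce $\omega'>\tilde\ma_{r_s}^K(p,r)$, i.e. the angle condition also fails at $r_s$ in $\Delta(p,r_s,r)$.

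\emph{Iteration and contradiction.} The triangle $\Delta(p,r_s,r)$ still has $d_T$-diameter $d_T(p,r)$, but by construction $T(r)-T(r_s)=\epsilon d_T(p,r)<\tfrac12 d_T(p,r)$, so after time reversal we are back in the configuration of the first step. Iterating, subdividing alternately along the short sides of each new failing triangle, we produce a sequence of failing triangles whose comparison-angle deficits accumulate via repeated concave Alexandrov applications; after finitely many iterations these deficits force the failure to localise in a sub-triangle of $d_T$-diameter at most $(1-\epsilon)d_T(p,r)$, the desired contradiction. The main obstacle is precisely this quantitative iteration: although the transfer mechanism itself is robust, one must show that the accumulated deficits grow quickly enough to guarantee termination in finitely many steps (rather than merely propagating the failure indefinitely), and this compounding — in which the Lorentzian reverse-triangle inequality and the sign conventions at time-endpoint vertices play a delicate role — is the technical heart of the Lorentzian cat's cradle.
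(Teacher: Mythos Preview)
Your transfer step is correct and, up to time reversal, is exactly one iteration of the paper's cat's cradle: the small triangle $\Delta(p,q,r_s)$ is the paper's $\Delta_1$, and the new large triangle $\Delta(p,r_s,r)$ with a failing $\sigma=+1$ angle at $r_s$ is the configuration carried forward. Iterating produces the same sequence of subdivisions. The gap is the endgame. Your claim that ``after finitely many iterations these deficits force the failure to localise in a sub-triangle of $d_T$-diameter at most $(1-\epsilon)d_T(p,r)$'' is not what happens and cannot be made to work: the large triangles carrying the failing angle all have $d_T$-diameter exactly $d_T(p,r)$ at every stage, and there is no mechanism by which the angle deficit at the moving vertex is forced to grow. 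There is no localisation and no finite termination.

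The correct endgame, following Lang--Schroeder and as carried out in the paper, is a limiting argument in the model space rather than in $X$. One sets $l_n=\tau(p,q_n)+\tau(q_n,r)$, which is non-decreasing (reverse triangle inequality in each $\Delta_n$) and bounded by $\tau(p,r)$; in parallel one assembles comparison hinges in $\lm{K}$ starting from the hinge $([q,p],[q,r],\omega)$, and the angle inequalities in the small triangles $\Delta_n$ force the opposite sides $\tau(\bar p_n,\bar r_n)$ to be non-decreasing as well. The contradiction then comes from showing $\tau(\bar p_n,\bar r_n)-l_n\to 0$ along a subsequence, which forces $\tau(\bar p_0,\bar r_0)\leq\tau(p,r)$, i.e.\ the hinge (hence angle) condition at $q$ holds. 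There is also a genuinely Lorentzian obstacle you have not touched: the $\tau$-side-lengths of the $\Delta_n$ are not controlled by their fixed $d_T$-diameter and could degenerate towards null, in which case the comparison angles $\bar\phi_n$ need not tend to zero. The paper rules this out by a compactness argument in $J(p,r)$: if $\tau(q_{n-1},q_n)\to 0$ for all $n$, the limit points would lie on null segments forcing $l_n\to 0$ by regularity, contradicting $l_n\geq l_0>0$. This step has no metric analogue and is essential to close the argument.
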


Since the following proof is rather lengthy, we first offer a brief overview. 
The cat's cradle construction (see Figure \ref{fig:catsCradle}) is a recursive decomposition of $\Delta$ into smaller triangles designed to ensure that the angle condition holds for the $\sigma=+1$ angle opposite the longest side, namely for the angle at $q$. 
From this construction, we infer a sequence of inequalities \eqref{eq:LS5}. 
We then continue with a similarly recursive construction in the model space, assembling a sequence of comparison triangles to infer a sequence of inequalities \eqref{eq:LS6}. 
Finally we show that the two sequences of inequalities converge to the same limit, which implies that hinge comparison at $q$ cannot fail.

\begin{proof}
To begin, set $L\coloneqq d_T(p,r)$ and $q_0 \coloneqq q$. 
Assume without loss of generality that $d_T(p,q_0)\geq d_T(q_0,r)$, otherwise the roles of $p$ and $r$ should be interchanged. Let $q_1$ be the point on the distance realiser $[p,q_0]$ such that\footnote{By \eqref{eq: null distance degenerate in timelike triangles}, $d_T(p,q_0)\geq \frac12 L$ and as $\epsilon < \frac12$, it follows that $d_T(p,\cdot)$ attains $\epsilon L$ within the distance realiser $[p,q_0]$.}  $d_T(p,q_1)=\epsilon L$, from which it follows by \eqref{eq: null distance degenerate in timelike triangles} that $d_T(q_1,r)=(1 - \epsilon)L$. 
Now $\Delta_1 = \Delta(q_1, q_0, r)$ is a timelike triangle satisfying conditions of the statement, hence the angle condition holds at all vertices of $\Delta_1$ by assumption.

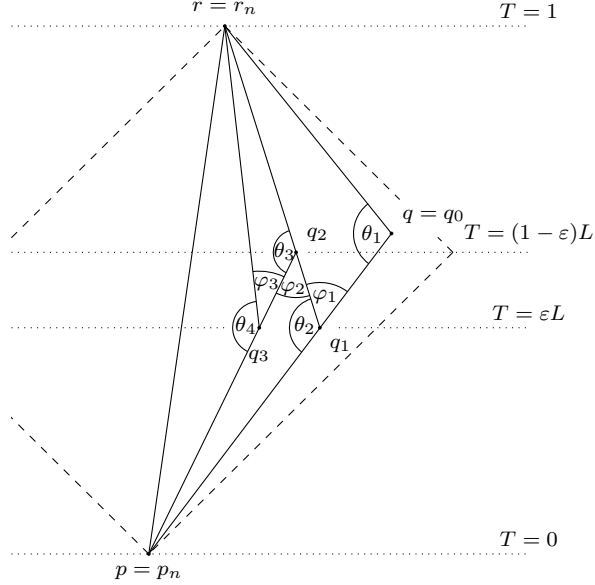
\begin{figure}[ht]
\begin{center}
\def\figCatsCradleEpsL{3}
\begin{tikzpicture}[label distance=0.2mm]
\clip(-2.80694926866508,-0.52257147695304372) rectangle (5.153415586204948,7.490647091159817);
\begin{scriptsize}
\draw[dotted](-5.,0.)-- (4.,0.) node[anchor=south]{$T=0$};
\draw[dotted] (-5.,\figCatsCradleEpsL)-- (0,\figCatsCradleEpsL) ;
\draw[dotted] (1.5,\figCatsCradleEpsL)-- (4.,\figCatsCradleEpsL) node[anchor=south]{$T=\varepsilon L$};
\draw[dotted] (-5.,{7.-\figCatsCradleEpsL})-- (0.55,{7.-\figCatsCradleEpsL}); 
\draw[dotted] (1.1,{7.-\figCatsCradleEpsL})-- (4.,{7.-\figCatsCradleEpsL}) node[anchor=south]{$T=(1-\varepsilon)L$};

\draw[dotted] (-5.,7.)-- (4.,7.) node[anchor=south]{$T=1$};
\draw (-1.,0.)--(2.1925,4.25)--(0.,7.)--cycle;
\draw ({-1*(4-\figCatsCradleEpsL)/4+2*\figCatsCradleEpsL/4},\figCatsCradleEpsL)--(0.,7.);
\draw (-1.,0.)--({(-1*1/4+2*3/4)*3/4+0*1/4},4.);
\draw ({-1*1/4+((-1*1/4+2*3/4)*3/4+0*1/4)*3/4},3.) -- (0,7);

\draw[dashed] (0.,7.)-- (2.3,4.7);
\draw[dashed] (3,4)-- (2.8,4.2);
\draw[dashed] (3.,4.)-- (-1.,0.);
\draw[dashed] (-1.,0.)-- (-4.,3.);
\draw[dashed] (-4.,3.)-- (0.,7.);

\coordinate [circle, fill=black, inner sep=0.5pt, label=270: {$p=p_n$}] (p) at (-1.,0.);

\coordinate [circle, fill=black, inner sep=0.5pt, label=90: {$r=r_n$}] (r) at   (0.,7.);

\coordinate [circle, fill=black, inner sep=0.5pt, label=45: {$q=q_0$}] (q0) at   (2.1925,4.25);

\coordinate [circle, fill=black, inner sep=0.5pt, label=300: {$q_1$}] (q1) at ({-1*(4-\figCatsCradleEpsL)/4+2*\figCatsCradleEpsL/4},\figCatsCradleEpsL);

\coordinate [circle, fill=black, inner sep=0.5pt, label=45: {$q_2$}] (q2) at ({(-1*1/4+2*3/4)*3/4+0*1/4},4.);
\coordinate [circle, fill=black, inner sep=0.5pt, label={[label distance=1.5mm]270: {$q_3$}}] (q3) at ({-1*1/4+((-1*1/4+2*3/4)*3/4+0*1/4)*3/4},3.);

\pic [draw=black, "$\theta_1$", angle radius=5mm, angle eccentricity=.5] {angle = r--q0--p};
\pic [draw=black, "$\theta_2$", angle radius=4mm, angle eccentricity=.5] {angle = r--q1--p};
\pic [draw=black, "$\theta_3$", angle radius=3mm, angle eccentricity=.5] {angle = r--q2--p};
\pic [draw=black, "$\theta_4$", angle radius=3.5mm, angle eccentricity=.5] {angle = r--q3--p};

\pic [draw=black, "$\phi_1$", angle radius=6mm, angle eccentricity=.7] {angle = q0--q1--r};
\pic [draw=black, "$\phi_2$", angle radius=6mm, angle eccentricity=.8] {angle = p--q2--q1};
\pic [draw=black, "$\phi_3$", angle radius=7.5mm, angle eccentricity=.8] {angle = q2--q3--r};
\end{scriptsize}
\end{tikzpicture}

\end{center}
\caption{The cat's cradle construction, showing the first three subtriangles $\Delta_1$, $\Delta_2$ and $\Delta_3$.}
\label{fig:catsCradle}
\end{figure}

We continue this construction recursively, picking points $q_n$, depending on whether $n$ is odd or even, to form new triangles. 
For even $n$, pick $q_{n}$ on the distance realiser $[q_{n-1},r]$ so that\footnote{Again, such a $q_n$ exists as $d_T(q_{n-1}, r) = (1-\epsilon) L>\varepsilon L$.} $d_T(q_{n}, r) = \epsilon L$ and $d_T(p, q_{n}) = (1 - \epsilon)L$. 
This defines a triangle $\Delta_{n} = \Delta(p, q_{n-1}, q_{n})$ for $n\geq 1$. 
Similarly, for odd $n$, pick $q_{n}$ on the distance realiser $[p,q_{n-1}]$ to define $\Delta_{n} = \Delta(q_{n}, q_{n-1}, r)$. 
In both cases, $\Delta_{n}$ satisfies the conditions of the statement and so the angle condition holds at all vertices of $\Delta_n$ by assumption.

Consider now the angles in $\Delta_n$. 
Let $\theta_{n} \coloneqq \ma_{q_{n-1}}(p, r)$ be the angle at $q_{n-1}$, which is given by $\ma_{q_{n-1}}(p,q_{n})$ or $\ma_{q_{n-1}}(q_{n},r)$ in $\Delta_{n}$, when $n$ is respectively even or odd.
Denote by $\phi_{n}$ the angle at $q_n$ in $\Delta_{n}$, which will be adjacent to $\theta_{n+1}$ in the subsequent triangle. 
When $n$ is even, $\phi_{n}$ is $\ma_{q_{n}}(q_{n-1}, p)$, while for odd $n$, the angle is $\ma_{q_{n}}(q_{n-1}, r)$. 
In either case, $ \phi_{n} =  \theta_{n+1}$, but with opposite signs $\sigma$, by Proposition \ref{pop: equal angles along geodesic}.

Set $l_n \coloneqq \tau(p, q_n) + \tau(q_n, r)$, for $n\geq 0$. By applying the reverse triangle inequality to each $\Delta_n$ (recalling that these are defined for $n\geq 1$), we have
\begin{equation}\label{eq:LS5}
0  <  l_0 \leq l_1 \leq \ldots \leq \tau(p, r)\,.
\end{equation}
Indeed, for odd $n$, we have $l_{n-1} = \tau(p, q_{n-1}) + \tau(q_{n-1}, r) = \tau(p, q_{n})+\tau(q_{n}, q_{n-1}) + \tau(q_{n-1}, r)\leq \tau(p, q_{n}) + \tau(q_{n}, r) = l_n$ and for even $n$ a similar argument can be used. 
The initial, strict inequality is due to $\Delta(p,q_0,r)$ being non-degenerate, while the final inequality in the chain follows from applying reverse triangle inequality to $\Delta(p,q_n,r)$.
The sequence $\{ (l_n) \}_{n\geq 0}$ in \eqref{eq:LS5} is a Cauchy sequence, as it is monotone increasing and bounded above by $\tau(p,r)$ (which is finite by size-bounds). Therefore, we have that $l_{n+1} - l_{n} \to 0$. 
This value is the excess in the triangle $\Delta_n$, that is, the value by which the longest side exceeds the sum of the two shortest sides (see Figure \ref{fig:catsCradle}). 
For $n$ even, this is $\tau(q_{n+1}, r) - \tau(q_{n}, r) - \tau(q_{n+1}, q_{n})$. For $n$ odd, on the other hand, this is $\tau(p, q_{n+1}) - \tau(p, q_{n}) - \tau(q_{n}, q_{n+1})$. 
\medskip

\textbf{Claim:} For some subsequence $n_i$, the time separation between the vertices $q_{n_i-1}$ and $ q_{n_i}$ of the triangle $\Delta_{n_i}$ is uniformly bounded away from zero. 

\textbf{Proof of claim:} 
For a contradiction, assume that the claim is false. 
Then we have $\lim_{n\to\infty} \tau(q_{2n-1}, q_{2n}) = \lim_{n\to\infty} \tau(q_{2n+1}, q_{2n}) = 0$. 
Consider the sequence of triples $\{ (q_{2n-1}, q_{2n}, q_{2n+1}) \}_{n\geq 1}$, which lies in the compact set $J(p,r)\times J(p,r)\times J(p,r)$. After passing to some subsequence $n_i$, we have that these converge to a limit triple $(q_a, q_b, q_c)$.  
Inspecting the time function, we see that
    \begin{equation*}
        T(q_{2n-1}) - T(p) = d_T(p,q_{2n-1}) = \epsilon L = d_T(p, q_{2n+1}) = T(q_{2n+1}) - T(p)\,.
    \end{equation*}
Furthermore, since $q_{2n-1} \leq q_{2n}$, \eqref{eq: null-distance-time-function} yields
    \begin{equation*}
        T(q_{2n}) - T(q_{2n \pm 1}) = d_T(q_{2n \pm 1}, q_{2n}) = (1-2\epsilon)L >0\,.
    \end{equation*}
Hence, $T(q_{2n-1})=T(q_{2n+1})\neq T(q_{2n})$, which in the limit $n \to \infty$ implies that $q_a\neq q_b\neq q_c$. 
Thus, by continuity of $\tau$, we have $\tau(q_a, q_b) = \tau(q_c, q_b) = 0$.

Again by continuity of $\tau$, we have $\tau(p,q_c)+\tau(q_c,q_b)=\tau(p,q_b)$ and by causal closedness, we have $p\leq q_c\leq q_b$. 
In particular, $p,q_c, \textrm{ and } q_b$ lie on a distance realiser with a non-constant null piece $[q_c,q_b]$. 
Thus, by regularity, the whole distance realiser must be null and therefore $\tau(p,q_b)=0$. 

Similarly, from $\tau(q_a,q_b)+\tau(q_b,r)=\tau(q_a,r)$ and $q_a\leq q_b\leq r$, we obtain that $q_a,q_b, \textrm{ and } r$ lie on a distance realiser which is null, so $\tau(q_b,r)=0$ (see Figure \ref{fig:catsCradleLimit}).
Therefore, $\lim_{i \to \infty} l_{2n_i} = \lim_{i \to \infty} \left( \tau(p, q_{2n_i})  + \tau(q_{2n_i}, r) \right) = \tau(p,q_b) + \tau(q_b,r) = 0$. 
However, \eqref{eq:LS5} states that $l_n$ is a non-decreasing sequence, beginning with $l_0 > 0$, which yields a contradiction. \textbf{Claim proven.}
\medskip

Let $p_n = p$ and $r_n = r$ for all $n\geq 0$. 
We now carry out a similar construction in the model space $\lm{K}$ by arranging comparison triangles $\bar{\Delta}_{n}$ (see Figure \ref{fig:catsCradleComparison}) for $\Delta_n$. 
Since, in general, the angles in $\bar\Delta_n$ do not match those in $\Delta_{n}$, the construction in $\lm{K}$ does not fit together as neatly. 

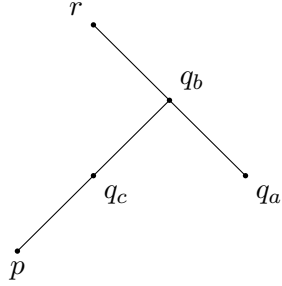
\begin{figure}
\begin{center}
\begin{tikzpicture}
\draw (-2,-2)-- (0,0);
\draw (1,-1) -- (-1,1);
\fill[color=black] 
    (-2,-2) circle(1pt) 
    (-1,-1) circle(1pt) 
    ( 0, 0) circle(1pt)
    ( 1,-1) circle(1pt)
    (-1, 1) circle(1pt);
\draw (-2,-2) node[anchor=north]{$p$} (-1,-1) node[anchor=north west]{$q_c$} (0,0) node[anchor=south west]{$q_b$} (1,-1) node[anchor=north west]{$q_a$} (-1,1) node[anchor=south east]{$r$};
\end{tikzpicture}
\end{center}
\caption{The limiting configuration of the cat's cradle, demonstrating that side lengths are bounded away from zero.}
\label{fig:catsCradleLimit}
\end{figure}

In fact, we begin by considering a comparison hinge $([\bq_0,\bp_0],[\bq_0,\br_0],\bar\omega_1)$ in $\lm{K}$ for $([q_0,p_0],[q_0,r_0],\theta_1)$; here, $(\bp_0,\bq_0,\br_0)$ is a triple of points such that $\tau(\bp_0,\bq_0) = \tau(p_0,q_0)$, $\tau(\bq_0,\br_0) = \tau(q_0,r_0)$, and the angle $\bar\omega_1$ between the distance realisers $[\bq_0,\bp_0],[\bq_0,\br_0]$ satisfies $\bar\omega_1 = \theta_1$. 
In particular, there is no a priori restriction on $\tau(\bp_0,\br_0)$ and instead we set out to obtain one (we are not considering a comparison triangle for $\Delta$, for example).

\begin{figure}
\begin{center}
\begin{tikzpicture}
\draw [shift={(1.5,2.)}] (0,0) -- (107.95575374797612:0.3905883856656444) arc (107.95575374797612:233.13010235415598:0.3905883856656444) -- cycle;
\draw [shift={(1.5,2.)}] (0,0) -- (130.50897178684028:0.8788238677476999) arc (130.50897178684028:233.13010235415598:0.8788238677476999) -- cycle;
\draw [shift={(0.,0.)}] (0,0) -- (53.13010235415598:0.7811767713312888) arc (53.13010235415598:94.45211030254173:0.7811767713312888) -- cycle;
\draw [shift={(0.,0.)}] (0,0) -- (94.45211030254175:0.5858825784984666) arc (94.45211030254175:223.65438458255832:0.5858825784984666) -- cycle;
\draw [shift={(-0.12869800373933468,1.6529252613237313)}] (0,0) -- (-114.46968190957956:0.9764709641641109) arc (-114.46968190957956:-85.54788969745829:0.9764709641641109) -- cycle;
\draw [shift={(-0.12869800373933468,1.6529252613237313)}] (0,0) -- (113.48433459211698:0.48823548208205547) arc (113.48433459211698:245.53031809042045:0.48823548208205547) -- cycle;


\draw (1.5,2.)-- (-0.32174500934833666,4.132313153309328);
\draw (-0.32174500934833666,4.132313153309328)-- (0.,0.);
\draw (1.5,2.)-- (-0.9,-1.2);
\draw (1.5,2.)-- (1.0322265995494535,3.4434504049227366);
\draw (0.,0.)-- (-1.5569471788691276,-1.4854813292226978);
\draw (-1.5569471788691276,-1.4854813292226978)-- (-0.12869800373933468,1.6529252613237313);
\draw (-0.12869800373933468,1.6529252613237313)-- (-1.4381777394921285,4.66677563050582);
\draw (-1.4381777394921285,4.66677563050582)-- (-1.0678207490301574,-0.4106845516383042);
\begin{scriptsize}
\coordinate [circle, fill=black, inner sep=0.5pt, label=300: {$\bq_1$}] (q1) at (0.,0.);
\coordinate [circle, fill=black, inner sep=0.5pt, label=0: {$\bq_0$}] (q0) at (1.5,2.);
\coordinate [circle, fill=black, inner sep=0.5pt, label=90: {$\br_1=\br_2$}] (r1) at (-0.32174500934833666,4.132313153309328);
\coordinate [circle, fill=black, inner sep=0.5pt, label=300: {$\bp_0=\bp_1$}] (p0) at (-0.9,-1.2);
\coordinate [circle, fill=black, inner sep=0.5pt, label=90: {$\br_0$}] (r0) at (1.0322265995494535,3.4434504049227366);
\coordinate [circle, fill=black, inner sep=0.5pt, label=180: {$\bp_2$}] (p2) at (-1.5569471788691276,-1.4854813292226978);
\coordinate [circle, fill=black, inner sep=0.5pt, label=0: {$\bq_2$}] (q2) at (-0.12869800373933468,1.6529252613237313);
\coordinate [circle, fill=black, inner sep=0.5pt, label=180: {$\br_3$}] (r3) at (-1.4381777394921285,4.66677563050582);
\coordinate [circle, fill=black, inner sep=0.5pt, label=120: {$\bq_3$}] (q3) at (-1.0678207490301574,-0.4106845516383042);

\pic [draw=black, "$\bar{\phi}_3$", angle radius=10.1mm, angle eccentricity=.75] {angle = q2--q3--r3};

\coordinate [label=180: {$\bar{\theta}_1$}] (w1) at (1.2,2.);
\coordinate [label=180: {$\bar{\omega}_1$}] (t) at (1.55,2.);
\coordinate [label=85: {$\bar{\varphi}_1$}] (f1) at (-0.1,0.35);
\coordinate [label=180: {$\bar{\theta}_2$}] (w2) at (0,0.1);
\coordinate [label=180: {$\bar{\theta}_3$}] (w3) at (-0.12869800373933468,1.6529252613237313);
\coordinate [label=270: {$\bar{\varphi}_2$}] (f2) at (-0.2469800373933468,1.109252613237313);
\end{scriptsize}
\end{tikzpicture}
\end{center}
\caption{The comparison construction of the cat's cradle. Not marked for $n \geq 2$ are the angles $\bar{\omega}_n$ which are adjacent to $\bar{\phi}_{n-1}$, and are located in approximately the same position as $\bar{\theta}_n$.} 
\label{fig:catsCradleComparison}
\end{figure}
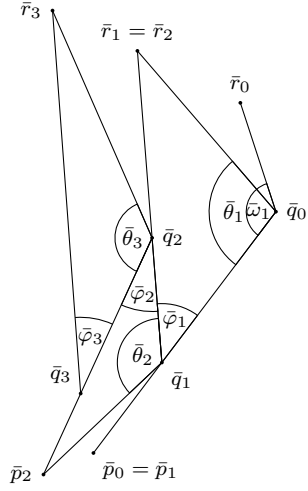

Using our hinge, we now recursively construct the comparison triangles $\bar\Delta_n$, for $n\geq 1$. 
For odd $n$, fix $\bar{q}_{n}$ on the distance realiser $[\bar{p}_{n-1},\bar{q}_{n-1}]$, such that $\tau(\bar{p}_{n-1},\bar{q}_{n}) = \tau(p_{n-1},q_{n})$. 
Then choose $\bar{r}_{n}$ such that the timelike triangle $\bar{\Delta}_{n} = \Delta(\bar{q}_{n},\bar{q}_{n-1}, \bar{r}_{n})$ has the same side lengths as $\Delta_{n}$. 
Finally, set $\bar{p}_{n} = \bar{p}_{n-1}$. 
For even $n$, similarly fix $\bar{q}_{n}$ on the distance realiser $ [\bar{q}_{n-1},\bar{r}_{n-1}]$, such that $\tau(\bar{q}_{n}, \br_{n-1}) = \tau(q_{n}, r_{n-1})$, construct a comparison triangle $\bar{\Delta}_{n} = \Delta(\bar{p}_{n}, \bar{q}_{n-1}, \bar{q}_{n})$, and set $\bar{r}_{n} = \bar{r}_{n-1}$.

The choice of the two new points at each stage again defines new angles. 
Denote by $\bar{\theta}_{n}$ the angle in $\bar{\Delta}_{n}$ at $\bar{q}_{n-1}$ (note that $\bar{\theta}_{n}=\tilde{\ma}_{q_{n-1}}(q_{n},r_{n})$ for $n$ odd and $\bar{\theta}_{n}=\tilde{\ma}_{q_{n-1}}(q_{n},p_{n})$ for $n$ even), by $\bar{\phi}_{n}$ the angle in $\bar{\Delta}_{n}$ at $\bar{q}_{n}$ and by $\bar{\omega}_{n+1}$ the angle of the remaining open hinge $([\bar{q}_{n},\bar{p}_{n}],[\bar{q}_{n},\bar{r}_{n}])$ adjacent to $\bar{\phi}_{n}$, see Figure \ref{fig:catsCradleComparison}. 
Note that $\bar{\phi}_{n} = \bar{\omega}_{n+1}$, but with opposite sign, again by Proposition \ref{pop: equal angles along geodesic}. 

As the angle condition holds at $q_{n-1}$ and $q_{n}$ in $\Delta_{n}$ by our assumptions, we have $\theta_{n} \leq \bar{\theta}_{n}$ at $q_{n-1}$, and at $q_{n}$, the type $\sigma = -1$ angle satisfies $\phi_{n} \geq \bar{\phi}_{n}$. 
Furthermore, by construction $\bar{\omega}_1 = \theta_1$ and by the above $\theta_1 \leq \bar\theta_1$, so $\bar{\omega}_1 \leq \bar\theta_1$. 
More generally, using the inequalities for $\phi_n$ and $\theta_n$ borne from the angle conditions holding in each $\Delta_n$, as well as equality of adjacent angles (see Proposition \ref{pop: equal angles along geodesic}), we obtain $\bar{\omega}_{n} = \bar{\phi}_{n-1} \leq \phi_{n-1} = \theta_{n} \leq \bar{\theta}_{n}$ for all $n\geq 2$. 
Therefore, we have $\bar\omega_n \leq \bar\theta_n$ for all $n \geq 1$, such that the relative sizes of the angles are indeed as depicted in Figure \ref{fig:catsCradleComparison}. 
Hence, by law of cosines monotonicity (Corollary \ref{cor:LOC Mono}), we have $\tau(\bar{p}_{n-1}, \bar{r}_{n-1}) \leq \tau(\bar{p}_{n}, \bar{r}_{n})$. 
Thus, the sequence of inequalities    
\begin{equation}\label{eq:LS6}
\tau(\bar{p}_0, \bar{r}_0) \leq \tau(\bar{p}_1, \bar{r}_1) \leq \ldots
\end{equation}
holds.

Consider again the subsequence $n_i$ from the claim above. Since in $\Delta_{n_i}$ the length of the (short) side $[q_{n_i-1},q_{n_i}]$  is uniformly bounded away from zero on this subsequence, the same is true of the length of the side $[\bq_{n_i-1},\bq_{n_i}]$ in $\bar\Delta_{n_i}$. Note that this implies the length of the longest side in $\bar\Delta_{n_i}$ is also uniformly bounded away from zero.
Hence, the angle $\bar\phi_{n_i}$ lies between two timelike sides of the triangle $\bar{\Delta}_{n_i}$ whose lengths are uniformly bounded away from zero, where the excess of $\bar{\Delta}_{n_i}$ (being equal to that of $\Delta_{n_i}$) is approaching $0$. 
This means that this sequence of configurations approaches a line, and not a point, so that $\bar\phi_{n_i}\to 0$. It follows from $\bar\omega_{n+1} = \bar\phi_n$ that $\bar\omega_{n_i+1}\to 0$. 
As $\bar\omega_{n_i+1}$ is given by $\ma_{\bar{q}_{n_i}}(\bar{p}_{n_i}, \bar{r}_{n_i})$, we conclude that, along our subsequence, $\tau(\bar{p}_{n_i}, \bar{r}_{n_i}) - \tau(\bar{p}_{n_i}, \bar{q}_{n_i}) - \tau(\bar{q}_{n_i}, \bar{r}_{n_i}) = \tau(\bar{p}_{n_i}, \bar{r}_{n_i}) - l_{n_i} \to 0$. 
In other words, the difference of the terms of the sequences in \eqref{eq:LS5} and \eqref{eq:LS6} is converging to 0.

Finally, assume that $\tau(p,r) < \tau(\bar{p}_0, \bar{r}_0)$, that is, the hinge condition \cite[Definition 3.14]{BKR23} fails at $q$ in $\Delta$. Set $C := \tau(\bar{p}_0, \bar{r}_0) - \tau(p,r)>0$. 
Since $\tau(\bar{p}_n, \bar{r}_n) - l_n \geq \tau(\bar{p}_0, \bar{r}_0) - \tau(p,r)$ for all $n \geq 0$ by \eqref{eq:LS5} and \eqref{eq:LS6}, we have $\tau(\bar{p}_n, \bar{r}_n) - l_n \geq C>0$ for all $n$, contradicting the fact that, on $n_i$, $\tau(\bar{p}_n, \bar{r}_n) - l_n \to 0$. It follows, therefore, that the hinge condition must hold at $q$ in $\Delta$. Since hinge comparison and angle comparison are equivalent (see Proposition \ref{pop: equivalence of curvature bounds}) the claim follows. 
\end{proof} 

Collecting the previous two propositions, we can deduce that the angle conditions hold in the large as long as they hold in the small. We formalise this statement here and will apply it in the proof of the main theorem.

\begin{cor}[Core argument of Lorentzian Toponogov Globalisation]\label{cor:coreArgument}
Let $X$ be a connected, globally hyperbolic, regular Lorentzian length space  with time function $T$ and curvature bounded below by $K\in \mathbb{R}$ in the sense of angle comparison. 
Let $0 < \epsilon < \frac12$. Let $\Delta = \Delta(p,q,r)$ be a timelike triangle in $X$ which satisfies the size bounds for $K$. 

Suppose that the angle condition holds at each angle in every timelike triangle $\Delta(p', q', r')\subseteq J(p,r)$ with $d_T(p', r') \leq (1-\epsilon) d_T(p,r)$.
Then the angle condition holds at all vertices of $\Delta(p',q',r')$, then the angle condition also holds at each angle in $\Delta$.
\end{cor}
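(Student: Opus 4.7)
The plan is to combine Propositions \ref{prop:negativeAngles} and \ref{prop:catsCradle} via a proof by contradiction. These two propositions are precisely designed to handle, respectively, failing angles of type $\sigma = -1$ and failing angles of type $\sigma = +1$. Suppose for a contradiction that the angle condition fails at some vertex of $\Delta = \Delta(p,q,r)$. Applying Proposition \ref{prop:negativeAngles} first transfers the failure to a middle vertex: there exists a timelike triangle $\Delta(p'', q'', r'') \subseteq J(p,r)$ such that the angle condition fails at $q''$, i.e.\ $\ma_{q''}(p'', r'') > \tilde{\ma}_{\bq''}^K(\bp'', \br'')$.

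Next, I would split into two cases depending on the $d_T$-size of $\Delta(p'', q'', r'')$. If $d_T(p'', r'') \leq (1-\epsilon)\, d_T(p, r)$, then the standing hypothesis of the corollary gives the angle condition at every vertex of $\Delta(p'', q'', r'')$, directly contradicting the failure at $q''$. Otherwise, $d_T(p'', r'') > (1-\epsilon)\, d_T(p, r)$, and I would apply Proposition \ref{prop:catsCradle} to $\Delta(p'', q'', r'')$. To justify this, I need to verify its hypothesis: that the angle condition holds at each angle in every timelike triangle $\Delta(p', q', r') \subseteq J(p'', r'')$ with $d_T(p', r') \leq (1-\epsilon)\, d_T(p'', r'')$. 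Since $p \leq p'' \leq r'' \leq r$ we have $J(p'', r'') \subseteq J(p, r)$, and Lemma \ref{lem:CausalDiamondDiameter} combined with monotonicity of the time function $T$ gives $d_T(p'', r'') = T(r'') - T(p'') \leq T(r) - T(p) = d_T(p, r)$. Consequently, any such $\Delta(p', q', r')$ satisfies $d_T(p', r') \leq (1-\epsilon)\, d_T(p, r)$ and lies in $J(p, r)$, so the hypothesis of the corollary supplies the angle condition at each of its vertices. Proposition \ref{prop:catsCradle} then yields the angle condition at $q''$ in $\Delta(p'', q'', r'')$, a contradiction.

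The size-bound hypothesis required by both propositions for $\Delta(p'', q'', r'')$ is inherited from $\Delta$ via $\tau(p'', r'') \leq \tau(p, r) < D_K$, so no additional verification is needed. The main obstacle is really only this bookkeeping about how the $d_T$-threshold and the containment in $J(p, r)$ propagate to subtriangles; once that is settled, the corollary falls out as a clean two-step combination of the preceding results, with no further geometric input required.
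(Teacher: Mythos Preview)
Your proof is correct and follows essentially the same route as the paper's: apply Proposition~\ref{prop:negativeAngles} to produce a triangle $\Delta(p'',q'',r'')\subseteq J(p,r)$ with a failing angle at the middle vertex, then verify the small-triangle hypothesis of Proposition~\ref{prop:catsCradle} for $\Delta(p'',q'',r'')$ via $J(p'',r'')\subseteq J(p,r)$ and $d_T(p'',r'')\leq d_T(p,r)$. The only cosmetic differences are that the paper first applies Proposition~\ref{prop:catsCradle} directly to $\Delta$ to dispose of the vertex $q$ before invoking Proposition~\ref{prop:negativeAngles}, and that your case split on the $d_T$-size of $\Delta(p'',q'',r'')$ is unnecessary, since the argument you give in the second case already covers the first.
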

\begin{proof}
First, observe that our assumptions include the criteria for Proposition \ref{prop:catsCradle} to hold. 
In particular, the angle condition must not fail at $q$ in $\Delta$. 
Now assume for a contradiction that the angle condition fails at either $p$ or $r$ in $\Delta$. 
Then by Proposition \ref{prop:negativeAngles}, there exists a timelike triangle $\Delta'' \coloneqq \Delta(p'',q'',r'') \subseteq J(p,r)$ such that the angle condition fails at $q''$.

Furthermore, by Lemma \ref{lem:CausalDiamondDiameter} and our discussion around \eqref{eq: null distance degenerate in timelike triangles}, we have $d_T(p'', r'') \leq d_T(p,r)$. Suppose that $\Delta(p', q', r') \subseteq J(p'', r'')$ is a timelike triangle with $d_T(p', r') \leq (1-\epsilon) d_T(p'',r'')$. 
Then it is also the case that $\Delta(p', q', r') \subseteq J(p, r)$ and $d_T(p', r') \leq (1-\epsilon) d_T(p,r)$. 
By the initial hypotheses, then, the angle condition holds at all vertices of all such $\Delta(p',q',r')$ and so Proposition \ref{prop:catsCradle} may be applied to $\Delta''$, to show that the angle condition cannot fail at $q''$, yielding a contradiction. 
Hence the angle condition must also not fail at $p$ or $r$ in $\Delta$ and our result follows.
\end{proof}

The previous result shows that the angle condition holds at all vertices of an arbitrarily large triangle, under the assumption that the angle condition holds for all vertices in a certain proportion of the smaller triangles in the space. It remains to show that (local) lower curvature bounds provide sufficiently many triangles with no failing angle condition for the above assumption to hold for each and every triangle. That is, no triangle possesses a vertex at which the angle condition fails.

\begin{thm}[Lorentzian Toponogov globalisation]\label{thm: LorentzianToponogov}
Let $X$ be a connected, globally hyperbolic, regular Lorentzian length space with a time function $T$ and curvature bounded below by $K\in \mathbb{R}$ in the sense of angle comparison. 
Then each of the properties in Definition \ref{def-cb-ang} hold globally; in particular, the entire space $X$ is a $(\geq K)$-comparison neighbourhood and hence has curvature globally bounded below by $K$. 
\end{thm}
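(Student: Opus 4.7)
The plan is to fix an arbitrary admissible causal triangle $\Delta_0 = \Delta(p_0, q_0, r_0)$ satisfying size bounds for $K$, and to show that the angle condition \ref{def-cb-ang.main} of Definition \ref{def-cb-ang} holds at each of its vertices; the remaining properties of Definition \ref{def-cb-ang} are essentially free. Specifically, \ref{def-cb-ang.item1} and \ref{def-cb-ang.item2} hold globally in any globally hyperbolic \LLS (as already noted at the start of Section \ref{sec:toponogov}), and \ref{def-cb-ang.item4} is local in nature: each inequality of the form \eqref{eq: triangle inequality for angles for lower curvature bounds} only involves the initial germs of the three curves at a single point $x$, which by hypothesis lies in a $(\geq K)$-comparison neighbourhood where the inequality already holds.

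For the angle condition, I would first consider the causal diamond $J_0 := J(p_0, r_0)$, which is compact by global hyperbolicity, and cover it by a finite collection of timelike diamonds each contained in some $(\geq K)$-comparison neighbourhood (such a cover exists by strong causality, localisability, and the local curvature bound hypothesis). Applying the Lorentzian Lebesgue Number Lemma (Lemma \ref{lem: Lebesgue number lemma}) then yields some $\varepsilon_0 > 0$ with the property that every causal sub-diamond of $J_0$ of $d_T$-diameter less than $\varepsilon_0$ sits entirely within one of the elements of the cover; consequently the angle condition holds at every vertex of every admissible causal triangle $\Delta(p', q', r') \subseteq J_0$ with $d_T(p', r') < \varepsilon_0$.

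The heart of the argument is then a bootstrap via Corollary \ref{cor:coreArgument}. Define
\[
\Lambda := \sup\Set*{\lambda \geq 0 \given \text{every timelike } \Delta(p',q',r') \subseteq J_0 \text{ with } d_T(p',r') \leq \lambda \text{ has the angle condition at each vertex}}\,.
\]
The previous step gives $\Lambda \geq \varepsilon_0 > 0$. If $\Lambda \geq d_T(p_0, r_0)$, then taking $\Delta(p', q', r') = \Delta_0$ finishes the argument. Otherwise, fix $\epsilon \in (0, 1/2)$ and take any $\lambda \in (\Lambda, \Lambda/(1-\epsilon))$ with $\lambda \leq d_T(p_0, r_0)$, together with any timelike $\Delta(p', q', r') \subseteq J_0$ such that $d_T(p', r') \leq \lambda$. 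Every sub-triangle contained in $J(p', r')$ of $d_T$-size at most $(1-\epsilon) d_T(p', r') < \Lambda$ satisfies the angle condition by definition of $\Lambda$, and Corollary \ref{cor:coreArgument} then forces the angle condition at every vertex of $\Delta(p', q', r')$ itself. This contradicts the maximality of $\Lambda$, and so $\Lambda \geq d_T(p_0, r_0)$.

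The main obstacle I anticipate is bookkeeping rather than substantive: one must verify that the finite cover of $J_0$ can be chosen to consist of timelike diamonds that are simultaneously $(\geq K)$-comparison neighbourhoods (needed to apply Lemma \ref{lem: Lebesgue number lemma} and to conclude that the angle condition is satisfied on every triangle contained therein), and that the size bounds for $K$ on $\Delta_0$ are inherited by every sub-triangle encountered in the bootstrap, so that each application of Corollary \ref{cor:coreArgument} is valid. Neither step should introduce a substantially new difficulty beyond what the preliminary lemmata already handle.
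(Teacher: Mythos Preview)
Your proposal is correct and follows essentially the same route as the paper: both arguments globalise \ref{def-cb-ang.item1}, \ref{def-cb-ang.item2}, \ref{def-cb-ang.item4} for free, then use the cover of $J(p_0,r_0)$ by timelike-diamond comparison neighbourhoods together with the Lorentzian Lebesgue Number Lemma to obtain a positive base scale, and finally bootstrap via Corollary \ref{cor:coreArgument}. The only cosmetic difference is that the paper phrases the bootstrap via the infimum $\delta$ of $d_T$-diameters of triangles with a failing angle, whereas you take the dual supremum $\Lambda$ of scales at which all triangles satisfy the angle condition; the paper also explicitly invokes \cite[Proposition 4.3]{BNR23} for the cover and \cite[Remark 3.12]{BKR23} to reduce from admissible causal to timelike triangles, which addresses exactly the two bookkeeping points you flagged.
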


\begin{proof}

First note that Definition \ref{def-cb-ang}\ref{def-cb-ang.item4} is a local condition, only requiring the germs of curves, hence it globalises trivially. Recall from the opening of this section that Definitions \ref{def-cb-ang}\ref{def-cb-ang.item1} and \ref{def-cb-ang}\ref{def-cb-ang.item2} also hold globally under our assumptions. 
It remains to check Definition \ref{def-cb-ang}\ref{def-cb-ang.main} for arbitrarily large triangles in $X$. 

Let $\Delta = \Delta(p,q,r)$ be a triangle in $X$, which we may assume to be timelike by \cite[Remark 3.12]{BKR23}, such that the angle condition fails at some vertex in $\Delta$ (this also permits triangles where the angle condition fails at multiple vertices). Clearly, $\Delta$ is contained in the causal diamond $J(p,r)$, which is compact by the global hyperbolicity of $X$. Suppose $\delta > 0$ is a greatest lower bound on the $d_T$-diameter of timelike triangles in $J(p,r)$ which exhibit a failing angle condition. In particular, any timelike triangle with $d_T$-diameter less than $\delta$ satisfies the angle condition, and there are triangles with $d_T$-diameter greater than yet arbitrarily close  to $\delta$ that exhibit a failing angle condition.\footnote{It is not strictly necessary that $\delta$ be a \emph{greatest} lower bound. This allows us to apply our propositions with an arbitrarily small constant $\epsilon > 0$, but they are stated for any $0< \epsilon < \frac12$.} Applying Corollary \ref{cor:coreArgument} to such triangles yields a contradiction which proves the result. 

All that remains is to establish the existence of the greatest lower bound $\delta$. Let $A$ be the set of $d_T$-diameters of triangles in $J(p,r)$ with a failing angle condition. By assumption, an angle condition fails in $\Delta(p,q,r)$, so $d_T(p,r) \in A$ and $A \neq \emptyset$. It follows that $A$ has a greatest lower bound, which we now verify is positive by demonstrating the existence of some positive lower bound.  By \cite[Proposition 4.3]{BNR23}\footnote{Recall that any globally hyperbolic Lorentzian length space is both non-timelike locally isolating and strongly causal. Furthermore, although \cite[Proposition 4.3]{BNR23} is formulated in terms of distance comparison, it is clear that the proof also holds for curvature bounds in terms of angle comparison.}, we can cover $J(p,r)$ by finitely many timelike diamonds which are all comparison neighbourhoods. Then by Lemma \ref{lem: Lebesgue number lemma} there exists some $\delta' >0$, such that any timelike diamond of $d_T$-diameter less than  $\delta'$ contained in $J(p,r)$ is contained in an element of this covering. In particular, any timelike triangle of $d_T$-diameter less than $\delta'$ is contained in a comparison neighbourhood and so has no failing angle conditions. It follows that $\delta'$ is a positive lower bound for $A$. 
\end{proof}

An application of Proposition \ref{pop: equivalence of curvature bounds} also yields that, provided \eqref{eq: triangle inequality for angles for lower curvature bounds} holds, lower curvature bounds in the sense of hinge, monotonicity, and triangle comparison also globalise. 

We also note that, since all causal diamonds $J(p,q)$ considered in this work satisfy $\tau(p,q) < D_K$, it is enough for only these diamonds to be assumed to be compact. The full power of global hyperbolicity is not used and we could instead use the weaker notion known as global hyperbolicity of order $\sqrt{-K}$ introduced by Harris \cite{Har82}.

\section{Applications and outlook}
\label{sec: applicationsandoutlook}

Finally, in this section we demonstrate the application of our results to the wider field of synthetic Lorentzian geometry and discuss potential refinements of the globalisation theorem along with some open problems.

\subsection{Gromov--Hausdorff convergence}

We begin by taking inspiration from the metric setting and consider the stability of curvature bounds under Gromov--Hausdorff convergence, a result which has been crucial for the proofs of finiteness results in Riemannian geometry.

Prior to the development of Alexandrov geometry as an independent subject, it was already understood that limits of Riemannian manifolds with sectional curvature bounded below are length spaces with curvature bounded below, in the sense that the conclusion of the Toponogov comparison theorem and certain nice topological properties hold \cite{GP91}. 
This insight was used to prove a variety of finiteness, pinching and rigidity results \cite{GP88, Yam88, GPW90, GP90, OSY89}.
The proof of the globalisation theorem for general Alexandrov spaces \cite{BGP92} placed this on a much clearer footing. 
It ensures that lower curvature bounds in the triangle comparison sense always survive Gromov--Hausdorff convergence, since there is no possibility that the size of comparison neighborhoods shrinks to zero along the sequence. 
Perelman used Alexandrov geometry to prove a much more powerful homeomorphism finiteness result for Alexandrov spaces and hence Riemannian manifolds \cite{Per91}, which has been generalised further to the setting of Riemannian orbifolds \cite{Harv16}.
\medskip

Gromov--Hausdorff convergence is most natural in the compact setting and can then be generalised to the non-compact case. 
As most interesting Lorentzian examples are non-compact, however, it has proved difficult to establish a general notion of convergence in this setting. Minguzzi and Suhr have provided an excellent notion of convergence for ``bounded Lorentzian metric spaces'' \cite{MS23} and in the globally hyperbolic case this can be applied to causal diamonds, as we will soon show. 
Indeed, since our work was completed, Bykov, Minguzzi and Suhr generalised this notion to the unbounded case \cite{BMS24}.
Sakovich and Sormani have also carried out extensive research into notions of intrinsic distances between ``causally-null-compactifiable spacetimes'' \cite{Sak04}.
We do not address these more recent notions of convergence in the present work.

For any reasonable notion of Gromov--Hausdorff convergence of Lorentz\-ian length spaces, we should expect that the condition of a timelike lower curvature bound is stable. 
This general principle is illustrated by Theorem \ref{thm: GHStability}, which brings together the globalisation result for spaces in the Kunzinger--S\"amann sense with the convergence result for bounded spaces in the Minguzzi--Suhr sense.
\medskip

A \emph{bounded Lorentzian metric space} is a topological space with a continuous time separation function satisfying a boundedness property ($\lbrace (p,q) : \tau(p,q) \geq \varepsilon \rbrace$ is compact for all $\varepsilon > 0$) and distinguishing points (if $p \neq q$ then for some $r$ either $\tau(p,r) \neq \tau(q,r)$ or $\tau(r,p) \neq \tau(r,q)$). 
It is a \emph{bounded Lorentzian length space} in the sense of Minguzzi--Suhr if timelike related points are connected by distance realisers (``maximal isocausal curves'' in the terminology of \cite{MS23}).

For bounded Lorentzian metric spaces, a Gromov--Hausdorff semi-distance can be defined simply by using the time separation in place of a metric. 
Bounded Lorentzian metric spaces admit at most one point which is not timelike related to any other point. 
If this exists it is denoted by $i_0$ and is called the spacelike boundary.
The Gromov--Hausdorff semi-distance, restricted to bounded Lorentzian metric spaces which do not contain $i_0$, is a true metric.
The same is true of bounded Lorentzian metric spaces which \emph{do} contain $i_0$.
For both classes of bounded Lorentzian metric spaces, then, we may speak of a Lorentzian Gromov--Hausdorff convergence.

We begin with a lemma to show that causal diamonds are bounded \LLSs in the Minguzzi--Suhr sense (after removing the spacelike boundary). 
Note, however, that causal diamonds are \emph{not} \LLSs in the Kunzinger--S\"amann sense, since they are not localisable.

\begin{lem}[Bounded \LLSs and causal diamonds]
    Let $X$ be a globally hyperbolic, regular \LLS (in the sense of Kunzinger--S\"amann, as used throughout this paper) and let $J(p,q)$ be a causal diamond in X. 
    Let $S$ be the set of points in $J(p,q)$ which are not timelike related to any other point in $J(p,q)$ -- the ``spacelike boundary'' of the diamond.
    Then $J(p,q) \setminus S$ is a bounded \LLS in the sense of Minguzzi--Suhr.
\end{lem}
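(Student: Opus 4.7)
The plan is to verify directly that $J(p,q) \setminus S$, equipped with the subspace topology inherited from $X$ and the restriction of $\tau$, satisfies each of the four defining properties of a bounded \LLS in the Minguzzi--Suhr sense: continuity of the time separation, the boundedness property, distinguishing of points, and the existence of distance realisers between timelike related pairs. Continuity of $\tau$ follows immediately from global hyperbolicity of $X$, which ensures $\tau$ is continuous on $X \times X$ by \cite[Theorem 3.28]{KS18}, and this passes to subspaces. For the boundedness property, fix $\varepsilon > 0$: the set $\{(x,y) \in J(p,q)^2 : \tau(x,y) \geq \varepsilon\}$ is closed in the compact product $J(p,q)^2$ and hence compact, and every pair in this set satisfies $x \ll y$, so both points are timelike related to something in $J(p,q)$ and lie in $J(p,q) \setminus S$. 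Thus the set is contained in $(J(p,q) \setminus S)^2$ as required.

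For the distinguishing property, I would argue by contradiction. Note first that removing $S$ is essential: any two distinct points of $S$ satisfy $\tau(\cdot, r) = \tau(r, \cdot) = 0$ for every $r \in J(p,q)$, so they cannot be distinguished within $J(p,q)$. Now suppose $x \neq y$ in $J(p,q) \setminus S$ satisfy $\tau(x,r) = \tau(y,r)$ and $\tau(r,x) = \tau(r,y)$ for every $r \in J(p,q) \setminus S$. Since $x \notin S$, there is some $z \in J(p,q)$ timelike related to $x$; up to time-reversal, assume $x \ll z$. Using push-up as in \cite[Lemma 2.25]{ACS20}, pick a sequence $x_n \to x$ with $x \ll x_n \ll z$. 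Because $p \leq x \ll x_n \leq z \leq q$ each $x_n$ lies in $J(p,q)$, and $x_n$ is timelike related to $x$, placing it in $J(p,q) \setminus S$. Indistinguishability applied to $r = x_n$ then yields $\tau(y, x_n) = \tau(x, x_n) > 0$, so $y \ll x_n$, and closedness of $\leq$ under global hyperbolicity gives $y \leq x$ in the limit. Symmetrically, from $y \ll x_1 \in J(p,q)$ the same push-up procedure produces a sequence $y_m \to y$ in $J(p,q) \setminus S$ with $y \ll y_m$, so indistinguishability forces $x \ll y_m$ and in the limit $x \leq y$. Antisymmetry of $\leq$ on a causal space (a consequence of global hyperbolicity) then gives $x = y$, a contradiction.

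Finally, for the length space condition, let $x, y \in J(p,q) \setminus S$ with $x \ll y$. Global hyperbolicity of $X$ provides a distance realiser $\gamma$ from $x$ to $y$, and regularity ensures $\gamma$ is timelike throughout, containing no null pieces. Causal convexity of the causal diamond $J(p,q)$ places $\gamma$ inside $J(p,q)$, and every interior point $z = \gamma(t)$ satisfies $x \ll z \ll y$, so $z \notin S$; together with the endpoint assumption this gives $\gamma \subseteq J(p,q) \setminus S$, and since $\tau$ is simply inherited, $\gamma$ remains a distance realiser there. The main obstacle is the distinguishing property: the witness $r$ must be chosen inside $J(p,q) \setminus S$ rather than merely in the ambient $X$, which is exactly why removing $S$ (rather than retaining all of $J(p,q)$) is unavoidable. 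The push-up sequences that yield the necessary witnesses must then be verified to remain inside $J(p,q) \setminus S$, and the argument closes via antisymmetry in a causal space.
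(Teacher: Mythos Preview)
Your proof is correct and follows the same overall structure as the paper --- verify continuity, boundedness, point-distinguishing, and existence of distance realisers in turn --- but your arguments for the last two properties differ from the paper's in an interesting way.

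For the distinguishing property, the paper argues as follows: from the sequence $x_n \to x$ with (in their orientation) $x_n \ll x$ and hence $x_n \ll y$, they conclude $x \leq y$ with $\tau(x,y)=0$; then the equality $\tau(x_n,x)=\tau(x_n,y)$ forces the concatenation of $[x_n,x]$ with the null segment $[x,y]$ to be a distance realiser of mixed causal character, contradicting \emph{regularity}. You instead run the limiting argument twice to obtain both $y \leq x$ and $x \leq y$, and close via antisymmetry of $\leq$ (a consequence of non-total imprisonment). Your route is arguably cleaner and does not invoke regularity at this step.

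For the length-space property the roles are reversed: you use regularity to make the realiser $\gamma$ timelike, so every interior point is timelike related to the endpoints and hence avoids $S$. The paper instead observes that if some $s \in S$ lay on $\gamma$ then $\tau(x,s)=\tau(s,y)=0$ and additivity along the realiser would give $\tau(x,y)=0$, contradicting $x \ll y$ --- no appeal to regularity needed there. Both packages are valid under the stated hypotheses; you have simply redistributed where regularity does the work.
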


\begin{proof}
Let $J(p,q)$ be a causal diamond in a globally hyperbolic \LLSn.  
By global hyperbolicity, $\tau$ is continuous with respect to the metric topology and, since $J(p,q)$ is compact and $\tau$ vanishes on $S$, the boundedness property holds on $J(p,q) \setminus S$. The final requirement for $J(p,q) \setminus S$ to be a bounded Lorentzian metric space is that $\tau$ distinguishes points. 

We adapt the argument from \cite{ACS20} which shows that globally hyperbolic \LLSs have the stronger property of being past- and future-distinguishing. 
Assume for a contradiction that there exist distinct points $x, y \in J(p,q)\setminus S$, which are not distinguished by $\tau$. 
In particular, $I^-(x) = I^-(y)$ and $I^+(x)=I^+(y)$.
If the points are timelike related to each other, this contradicts chronology, which is implied by global hyperbolicity. 

Consider now the case when $x$ and $y$ are not timelike related.
Since $x \notin S$, at least one point in $J(p,q) \setminus S$ is timelike related to $x$. 
Then, $x$  is joined to that point by a timelike curve in $J(p,q)$ and so is the limit of some sequence $x_n$, with the entire sequence lying either in $I^-(x)$ or $I^+(x)$.
Without loss of generality, suppose $x_n \in I^-(x)$.
Since $I^-(x) = I^-(y)$, we also have $x_n \in I^-(y)$. 
Hence, $x \in J^-(y)$, with $\tau(x,y)=0$ and $x\neq y$.
As $\tau$ does not distinguish $x$ and $y$, we have $\tau(x_n, x) = \tau(x_n, y) >0$, from which it follows that the concatenation of the distance realisers from $x_n$ to $x$ and from $x$ to $y$ forms a distance realising curve of mixed causal character, contradicting regularity.
Therefore $J(p,q) \setminus S$ is a bounded Lorentzian metric space.

Finally, we demonstrate that timelike related points are connected by distance realisers lying inside $J(p,q) \setminus S$.
Let $x \ll y$ in $J(p,q) \setminus S$.
By global hyperbolicity, $x$ and $y$ are connected by a distance realiser lying in $J(p,q)$.
If there were a point $s\in S$ on the distance realiser from $x$ to $y$, we would have $0=\tau(x, s) =\tau(s, y)$. 
However, $\tau (x, y) = \tau(x, s) + \tau(s, y) = 0$, contradicting $x \ll y$.
\end{proof}

\begin{thm}[Stability of lower curvature bounds]\label{thm: GHStability}
    Let $X_i$ be a sequence of connected, globally hyperbolic, regular, Lorentzian length spaces with time functions and curvature bounded below by $K\in \mathbb{R}$ in the sense of angle comparison. Let $J_i = J(p_i, q_i)$ be a sequence of causal diamonds in $X_i$ and let $S_i$ be the spacelike boundary of $J_i$.
    If the sequence $J_i \setminus S_i$ converges in the sense of Minguzzi--Suhr to some $J$, then $J$ is a bounded \LLS with sectional curvature bounded below by $K$ in the sense of Minguzzi--Suhr.
\end{thm}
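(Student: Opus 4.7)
The plan is to leverage Theorem \ref{thm: LorentzianToponogov} to upgrade the local curvature bound on each $X_i$ to a global one, then restrict to the diamonds $J_i \setminus S_i$, and finally argue that the resulting comparison inequality passes to the Minguzzi--Suhr limit. Since Minguzzi--Suhr curvature bounds are most naturally phrased via timelike triangle comparison (where a bound on angles is not part of the structure), it will be most convenient to work with the triangle comparison characterisation rather than angle comparison throughout the stability argument.

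First I would apply Theorem \ref{thm: LorentzianToponogov} to each $X_i$, concluding that $X_i$ has curvature \emph{globally} bounded below by $K$ in the sense of angle comparison, and then invoke Proposition \ref{pop: equivalence of curvature bounds} (whose hypothesis on the triangle inequality of angles is automatic here, as it is built into the angle comparison definition) to obtain global timelike triangle comparison on each $X_i$. Since any causal diamond $J_i = J(p_i, q_i)$ is causally convex, every admissible timelike triangle with vertices in $J_i$ is entirely contained in $J_i$, together with its distance realisers (by regularity and global hyperbolicity). Hence triangle comparison with respect to $\lm{K}$ holds internally on each $J_i$, and by the preceding lemma, $J_i \setminus S_i$ is a bounded \LLS in the Minguzzi--Suhr sense on which triangle comparison is satisfied.

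The main step is to show that triangle comparison survives the passage to the Minguzzi--Suhr limit. Given a timelike triangle $\Delta(x,y,z) \subseteq J$ satisfying size bounds for $K$, with points $u, v$ on two of its sides, I would use the definition of Minguzzi--Suhr convergence to pick $\varepsilon$-isometric correspondences and choose approximants $x_i, y_i, z_i \in J_i \setminus S_i$ whose pairwise time separations converge to those of $x, y, z$. For $i$ large, these form genuine timelike triangles $\Delta_i$ in $J_i$ satisfying size bounds. Distance realisers joining them exist by the bounded length space property, and one can select points $u_i, v_i$ on the corresponding sides with $\tau$-distances to the endpoints matching those of $u, v$ up to error; this uses continuity of $\tau$ under Minguzzi--Suhr convergence together with monotonicity of the time separation along distance realisers. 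Applying the triangle comparison inequality in $X_i$ yields
\begin{equation*}
\tau(u_i, v_i) \leq \tau(\bar u_i, \bar v_i),
\end{equation*}
where the right-hand side is computed in $\lm{K}$ from comparison triangles for $\Delta_i$. Passing to the limit and using continuity of the comparison construction in $\lm{K}$ (the comparison triangle depends continuously on its side lengths, and so do comparison points parameterised by $\tau$-distance from a vertex) yields $\tau(u,v) \leq \tau(\bar u, \bar v)$ in $J$.

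The main obstacle will be the technical bookkeeping of the approximation: ensuring that one can track intermediate points on distance realisers under Minguzzi--Suhr convergence, and that the comparison points remain on the comparison sides in the limit. This essentially requires a Minguzzi--Suhr analogue of the fact that distance realisers converge to distance realisers (or at least that one can find convergent distance realiser approximants through prescribed points), which should follow from compactness properties of the space of causal curves in a bounded Lorentzian length space together with the continuity of $\tau$ along the convergence. Once this is in place, the inequality passes to the limit termwise and the conclusion follows.
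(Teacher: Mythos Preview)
Your setup matches the paper exactly: apply Theorem \ref{thm: LorentzianToponogov} to each $X_i$, convert to triangle comparison via Proposition \ref{pop: equivalence of curvature bounds}, and use the preceding lemma to view each $J_i\setminus S_i$ as a bounded \LLS in the Minguzzi--Suhr sense carrying the global triangle comparison bound. From that point on, however, the paper does not argue by hand. It simply invokes two results from \cite{MS23}: Theorem 5.18, which guarantees that the Minguzzi--Suhr limit $J$ is again a bounded \LLSn, and Theorem 6.7, which gives stability of the sectional curvature bound under their convergence. Your ``main step'' is effectively a sketch of a reproof of \cite[Theorem 6.7]{MS23}; this is reasonable in spirit but considerably more work than needed, and the technical issues you flag (tracking intermediate points on distance realisers through the convergence) are precisely what that theorem already handles.

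There is also a genuine gap in your outline: you never establish that the limit $J$ is a bounded \LLSn, yet you implicitly use this when you speak of a timelike triangle $\Delta(x,y,z)\subseteq J$ with points $u,v$ on its sides. Without knowing that $J$ admits distance realisers between timelike related points, there are no ``sides'' to place $u,v$ on, and the Minguzzi--Suhr curvature bound (which is formulated for bounded \LLSsn) is not even applicable. The paper covers this via \cite[Theorem 5.18]{MS23}; you would need either to cite that result or to supply an argument that the bounded length space property is closed under Minguzzi--Suhr convergence before your approximation scheme can get off the ground.
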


\begin{proof}
    Each $J_i \setminus S_i$ is a bounded \LLS in the sense of Minguzzi--Suhr, by the previous lemma.
    By Theorem \ref{thm: LorentzianToponogov}, these spaces have a global lower curvature bound in any of the senses mentioned in Proposition \ref{pop: equivalence of curvature bounds}. In particular, $J_i \setminus S_i$ has curvature globally bounded below by $K$ in the sense of timelike triangle comparison, which is precisely the definition of sectional curvature bounded below by $K$ in the sense of Minguzzi--Suhr.
    By \cite[Theorem 5.18]{MS23}, the limit $J$ is a bounded \LLS in the sense of Minguzzi--Suhr.
    and by \cite[Theorem 6.7]{MS23}, it has sectional curvature bounded below in the sense of Minguzzi--Suhr. 
\end{proof}

In particular, an application of Proposition \ref{pop: equivalence of curvature bounds} also yields that, provided \eqref{eq: triangle inequality for angles for lower curvature bounds} holds on each $X_i$, lower curvature bounds in the sense of hinge, monotonicity, and triangle comparison are also stable under convergence, in the same sense, i.e.\ the limit space has sectional curvature bounded below in the sense of Minguzzi--Suhr.

\subsection{Geometric consequences}

There are also several direct corollaries to Theorem \ref{thm: LorentzianToponogov}, which extend known results for spaces with global timelike curvature bounds to those with local timelike curvature bounds, under the assumptions of our Toponogov-style Globalisation Theorem. 
In what follows, we present two such results, namely the Bonnet--Myers Theorem and the Splitting Theorem.
\medskip

First proven by Bonnet in two dimensions, the Bonnet--Myers theorem states that a complete Riemannian manifold with sectional curvature bounded below by some \emph{positive} $k\in \mathbb{R}$, has diameter $\diam(M) \leq \frac{\pi}{\sqrt{k}}$. 
For dimensions greater than two, the result was formalised by Myers \cite{Mye35}, who later demonstrated that the weaker assumption of a positive lower Ricci curvature bound was sufficient to obtain an associated upper bound on the diameter \cite{Mye41}. 
A corresponding synthetic result appears in \cite[Theorem 10.4.1]{BBI01}, where complete metric length spaces with sectional curvature bounded below by some $k>0$ are shown to also satisfy $\diam(X)\leq \frac{\pi}{\sqrt{k}}$.

Bonnet--Myers-style theorems also appear in the literature of Lorentzian geometry. 
In the smooth setting, Beem and Ehrlich \cite[Theorem 9.5]{BE79} have shown that globally hyperbolic spacetimes with timelike (sectional) curvature bounded below by some \emph{negative} $K \in \mathbb{R}$ have $\diam(M) \leq \frac{\pi}{\sqrt{-K}}$, where the diameter is now defined in terms of the Lorentzian distance function induced by the spacetime metric.\footnote{Lorentzian distance functions are, in essence, time-separation functions which are induced by a Lorentzian metric, in much the same way that a Riemannian manifold induces a distance.} In the synthetic Lorentzian setting, where the diameter is defined in terms of the time-separation function $\tau$, Cavalletti and Mondino \cite[Proposition 5.10]{CM24} have shown that measured Lorentzian pre-length spaces with suitable timelike measure contraction property (such as that implied by a lower Ricci curvature bound), also have an upper bound on their diameter. 
\medskip

Observe how, while the metric theorems consider $k>0$, the Lorentzian results concern $K<0$. This is not quite as superficial a change as it might first seem; it is a consequence of the hierarchy of curvature bound implications being reversed, following the conventions set by \cite{KS18}. 
In particular, in the metric setting, curvature bounded below by $k$ implies curvature bounded below by all $k'\leq k$, whereas in the Lorentzian setting, curvature bounded below by $K$ implies curvature bounded below by all $K' \geq K$. A similar statement holds for upper curvature bounds, with the inequalities reversed. 
Although we adhere to these conventions throughout this paper, they are by no means ubiquitous. For example, \cite{CM24, BE79, AB08} present Lorentzian results using the metric hierarchy.

While, in the metric setting, we could be content with a result utilising bounds on the Ricci curvature, since they are known to be weaker than sectional curvature bounds, see \cite{Pet19}, in the setting of Lorentzian pre-length spaces, the hierarchy of Ricci curvature bounds and timelike (sectional) curvature bounds via triangle comparison is an open question. 
As such, in \cite[Theorem 4.11]{BNR23}, a preliminary Bonnet--Myers result for timelike curvature bounds via triangle comparison is proven; namely, it is shown that strongly causal, locally causally closed, regular, and geodesic Lorentzian pre-length spaces with timelike curvature \emph{globally} bounded below by $K<0$ have finite diameter $\diam_{\mathrm{fin}}(X)\leq \frac{\pi}{\sqrt{-K}}$. Applying Theorem \ref{thm: LorentzianToponogov} re-frames this result in terms of local timelike curvature bounds as follows.
 
\begin{thm}[Synthetic Lorentzian Bonnet--Myers]
\label{thm: lor meyers}
Let $X$ be a connected, globally hyperbolic, and regular Lorentzian length space which has a time function $T$ and local curvature bounded below by $K\in \mathbb{R}$ in the sense of angle comparison. Assume $K<0$. Assume that $X$ possesses the following non-degeneracy condition: for each pair of points $x\ll z$ in $X$ we find $y \in X$ such that $\Delta(x,y,z)$ is a non-degenerate timelike triangle.
Then the diameter\footnote{Here we can replace the finite diameter with the diameter, since these notions coincide on globally hyperbolic Lorentzian length spaces.} satisfies $\diam(X)\leq \frac{\pi}{\sqrt{-K}}$. 
\end{thm}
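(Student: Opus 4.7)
The plan is to reduce the theorem to the already known Bonnet--Myers result for spaces with \emph{global} lower timelike curvature bounds by invoking the main globalisation result of this paper. The argument is essentially in three short steps: globalise the curvature bound using Theorem \ref{thm: LorentzianToponogov}, convert it to the form used in \cite[Theorem 4.11]{BNR23} via the equivalence of curvature bound formulations, and then identify the diameter with the finite diameter.

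First, observe that the hypotheses of the statement (connected, globally hyperbolic, regular Lorentzian length space with a time function $T$ and local curvature bounded below by $K$ in the sense of angle comparison) are exactly the hypotheses of Theorem \ref{thm: LorentzianToponogov}. Applying it directly yields that $X$ itself is a $(\geq K)$-comparison neighbourhood in the sense of angle comparison; in particular, property \ref{def-cb-ang.main} of Definition \ref{def-cb-ang} holds globally for all admissible timelike triangles in $X$ satisfying size bounds.

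Second, since the angle comparison definition includes the triangle inequality of angles \eqref{eq: triangle inequality for angles for lower curvature bounds} as item \ref{def-cb-ang.item4}, the equivalence result Proposition \ref{pop: equivalence of curvature bounds} applies with $U=X$ and upgrades the global angle comparison bound to a global timelike triangle comparison bound. At this point, the hypotheses of \cite[Theorem 4.11]{BNR23} are all in place: $X$ is strongly causal and locally causally closed (by global hyperbolicity), regular (by assumption), geodesic (recalled at the start of Section \ref{sec:toponogov}), and has timelike curvature globally bounded below by $K<0$ in the sense of triangle comparison. That theorem therefore yields $\diamfin(X)\leq \pi/\sqrt{-K}$.

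Finally, it remains to pass from the finite diameter to the diameter. On a globally hyperbolic Lorentzian length space, $\tau$ is finite-valued and continuous \cite[Theorems 3.28, 3.30]{KS18}, so the supremum of $\tau$ over $X\times X$ coincides with the supremum over pairs with $\tau<\infty$, giving $\diam(X)=\diamfin(X)$ directly. The non-degeneracy condition is used implicitly in \cite[Theorem 4.11]{BNR23} to guarantee, for a supremising sequence of pairs $x_n\ll z_n$, the existence of auxiliary vertices $y_n$ forming genuine (non-degenerate) timelike triangles to which the global lower curvature bound can be applied. I do not anticipate a serious obstacle: the only content beyond the direct citation of Theorem \ref{thm: LorentzianToponogov} and \cite[Theorem 4.11]{BNR23} is checking the compatibility of definitions and the identification $\diam=\diamfin$, both of which are essentially bookkeeping.
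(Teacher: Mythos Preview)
Your proposal is correct and matches the paper's approach exactly: the paper presents Theorem~\ref{thm: lor meyers} as an immediate corollary of Theorem~\ref{thm: LorentzianToponogov} combined with \cite[Theorem 4.11]{BNR23}, with the $\diam=\diamfin$ identification handled in the footnote. Your added remarks about converting to triangle comparison via Proposition~\ref{pop: equivalence of curvature bounds} and checking the standing hypotheses of \cite[Theorem 4.11]{BNR23} are appropriate bookkeeping that the paper leaves implicit.
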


Following \cite[Remark 4.12]{BNR23}, this result may be viewed as a direct synthetic extension of \cite[Theorem 9.5]{BE79}, with an additional non-degeneracy condition. Similarly to the exclusion of spaces isomorphic to $\mathbb{R}$, $(0,\infty)$, $[0,B]$ for all $B>\frac{\pi}{\sqrt{k}}$, and circles of radius greater than $\frac{1}{\sqrt{k}}$ in the metric setting, the non-degeneracy condition excludes locally one-dimensional spaces from the scope of our theorem. 
\medskip 

Recall that, throughout this paper, we have assumed triangles satisfy appropriate size-bounds, such that their comparison triangle is realisable cf.\ \cite[Lemma 2.1]{AB08}. In particular, given a Lorentzian pre-length space $X$ with curvature bounded below (or above) by $K$, we assume that triangles $\Delta(p,q,r)$ have $\tau(p,r)<D_K$. The following lemma, which was previously presented in the context of spacetimes by \cite[Proposition 9.4]{BE79}, gives us conditions under which the diameter of a Lorentzian pre-length space is not attained. 
Note that the following lemma is formulated via the ordinary diameter instead of the finite diameter, i.e. the supremum of all $\tau$-values in the space. 

\begin{lem}
    Let $X$ be a strongly causal Lorentzian pre-length space. If $\diam(X)$ is finite, then it is not attained on $X$. 
    Furthermore, if $X$ is a globally hyperbolic Lorentzian length space, then $\diam(X)$ is never attained on $X$, independently of whether it is finite.
\end{lem}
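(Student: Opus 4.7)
The plan is to argue by contradiction: suppose $\tau(p, q) = \diam(X)$ for some pair $(p, q)$, and produce a new pair with strictly larger time separation. The crucial ingredient is that in a strongly causal Lorentzian pre-length space, every point $x$ has non-empty chronological past \emph{and} future. Indeed, strong causality means that the Alexandrov topology coincides with the metric topology, so every $x$ lies in some chronological diamond $I(a, b) = I^+(a) \cap I^-(b)$, yielding points $a \ll x \ll b$.

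For the first statement, assume $D := \tau(p, q) = \diam(X) < \infty$ is attained. If $D = 0$ then the claim is vacuous; otherwise $p \ll q$, and the observation above applied at $q$ produces $q' \in X$ with $q \ll q'$. Transitivity of $\ll$ gives $p \ll q'$, and the reverse triangle inequality combined with $\tau(q, q') > 0$ yields
\begin{equation*}
\tau(p, q') \;\geq\; \tau(p, q) + \tau(q, q') \;>\; D\,,
\end{equation*}
contradicting $D = \diam(X)$. For the second statement, recall that globally hyperbolic Lorentzian length spaces are strongly causal (see the footnote in the proof of Theorem \ref{thm: LorentzianToponogov}) and carry a finite time separation function $\tau$, cf.\ \cite[Theorem 3.28]{KS18}. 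Thus if $\diam(X) = \infty$ then no pair achieves it, as every $\tau$-value is finite; and if $\diam(X) < \infty$ the first statement applies directly.

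The main obstacle is recognising that strong causality, via the coincidence of the Alexandrov and metric topologies, forces $I^+(x)$ (and $I^-(x)$) to be non-empty at every $x \in X$. Once this observation is extracted, the remainder of the argument is an immediate application of the transitivity of $\ll$ and the reverse triangle inequality built into the Lorentzian pre-length space axioms.
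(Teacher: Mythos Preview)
Your argument is correct and matches the paper's proof essentially line for line: use strong causality to produce $q'\gg q$, then apply the reverse triangle inequality to exceed $\diam(X)$; you are in fact more explicit than the paper in justifying, via the Alexandrov topology, why $I^+(q)\neq\emptyset$. One small quibble: the case $D=0$ is not literally vacuous (the supremum $0$ would then be attained by every pair), but your own Alexandrov-topology observation rules this out for any non-trivial strongly causal space, so the slip is harmless.
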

\begin{proof}
    Let $X$ be a strongly causal Lorentzian pre-length space. Assume for contradiction that $\diam(X)$ is finite and attained by some $p,q\in X$, that is, $\tau(p,q) = \diam(X)$. Then, by strong causality, there exists a point $q'$ with $q \ll q'$, such that $\tau(p,q') \geq \tau(p,q) + \tau(q,q') > \tau(p,q) = \diam(X)$, contradicting the definition of the diameter. 

    Now assume that $X$ is a globally hyperbolic Lorentzian length space. Recall that, on such a space, the time separation function is finite. Furthermore, the assumptions of the previous part still hold, hence $\diam(X)$ can never be attained.    
\end{proof}

Therefore, all triangles in Lorentzian pre-length spaces which satisfy the assumptions of either \cite[Theorem 4.11]{BNR23} or \ref{thm: lor meyers} for some $K<0$ satisfy size bounds for $K$. 
\medskip

Let us now move on to discussing the Splitting Theorem. Under the assumption of non-negative curvature, splitting theorems have also been proven in a variety of settings. In Riemannian geometry, Toponogov showed that if a complete manifold with non-negative sectional curvature contains a line, it splits as a product, with $\rr$ as one of the factors \cite{Top59-2, Top64}. Cheeger and Gromoll generalised this to the case where the manifold has only non-negative Ricci curvature \cite{CG71}. 

Beem, Ehrlich, Markvorsen and Galloway proved an analogous result for Lorentzian manifolds, where the hypothesis of completeness is replaced with global hyperbolicity, non-negative sectional curvature need only hold on timelike planes, and the line must be timelike \cite{BEMG85-2,BEMG85}. The assumption of non-negative (sectional) curvature can again be weakened to a bound on the Ricci curvature, known in general relativity as the strong energy condition. In increasing degrees of generality, Galloway~\cite{Gal84} and Eschenburg~\cite{Esc88} were able to prove this result for globally hyperbolic and timelike geodesically complete spacetimes, with Galloway dropping the latter assumption in~\cite{Gal89}. The splitting theorem for timelike geodesically complete spacetimes, as originally conjectured by Yau in~\cite{Yau82} (without the assumption of global hyperbolicity), was finally proven by Newman in~\cite{New90}. 
\medskip

In the synthetic setting, Toponogov's splitting result can be generalised to Alexandrov geometry. This was first achieved by Milka, with the stronger assumption that an affine function exists \cite{Mil67}, but was later weakened by Burago--Burago--Ivanov to the presence of a line \cite{BBI01}. In the context of Lorentzian length spaces, Beran, Ohanyan, Rott and Solis proved a Splitting Theorem under the presence of global curvature bounds \cite{BORS23}, which we can now restate with the weaker assumption of local curvature bounds.

\begin{thm}[Synthetic Lorentzian Splitting]
Let $(X,d,\ll,\leq,\tau)$ be a connected, globally hyperbolic, regular Lorentzian length space with a proper metric $d$, a time function $T$, and (local) timelike curvature bounded below by zero, which satisfies timelike geodesic prolongation and contains a complete timelike line $\gamma:\R \to X$. 
Then there is a $\tau$- and $\leq$-preserving homeomorphism $f:\R\times S \to X$, where $S$ is a proper, strictly intrinsic metric space of Alexandrov curvature $\geq 0$.
\end{thm}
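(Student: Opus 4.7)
The plan is to reduce this to the known Splitting Theorem for Lorentzian length spaces with \emph{global} lower timelike curvature bounds proved in \cite{BORS23}, using Theorem \ref{thm: LorentzianToponogov} to upgrade the local bound to a global one. Concretely, one first verifies that the hypotheses of Theorem \ref{thm: LorentzianToponogov} are satisfied: $X$ is assumed connected, globally hyperbolic, regular, equipped with a time function $T$, and the local curvature bound is given in the sense of angle comparison with $K=0$. Applying Theorem \ref{thm: LorentzianToponogov} then upgrades this to a global curvature bound in the sense of angle comparison with $K=0$, so that $X$ itself is a $(\geq 0)$-comparison neighbourhood.

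Next, I would translate this to whichever characterisation of lower curvature bounds is required to invoke the Splitting Theorem of \cite{BORS23}. Since the angle comparison formulation includes the triangle inequality of angles \eqref{eq: triangle inequality for angles for lower curvature bounds} by definition, Proposition \ref{pop: equivalence of curvature bounds} applies immediately: the global angle bound is equivalent to global timelike triangle comparison (and to global hinge comparison). Thus $X$ has global timelike curvature bounded below by $0$ in whichever sense the Splitting Theorem demands. All remaining hypotheses of the splitting result, namely properness of $d$, global hyperbolicity, regularity, timelike geodesic prolongation, and the existence of a complete timelike line $\gamma:\R\to X$, are directly inherited from the hypotheses of the present statement.

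Invoking the Splitting Theorem of \cite{BORS23} in this globally-bounded setting then produces the desired $\tau$- and $\leq$-preserving homeomorphism $f:\R\times S \to X$, with $S$ a proper strictly intrinsic metric space of Alexandrov curvature $\geq 0$. The main conceptual step is the globalisation, which has already been handled by Theorem \ref{thm: LorentzianToponogov}; the only subtlety is to ensure the equivalence of curvature-bound formulations applies, which is why checking the standing assumptions of Proposition \ref{pop: equivalence of curvature bounds} (global hyperbolicity, regularity, and the angle triangle inequality built into Definition \ref{def-cb-ang}) is the critical bookkeeping step. No further argument is needed beyond assembling these pieces.
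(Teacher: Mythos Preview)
Your proposal is correct and matches the paper's approach exactly: the paper presents this theorem as a direct restatement of \cite[Theorem 1.4]{BORS23} with local in place of global bounds, the only new ingredient being the application of Theorem \ref{thm: LorentzianToponogov} to upgrade the local bound to a global one. The paper does not even write out a formal proof, merely remarking after the statement that the time function is the sole additional hypothesis needed to invoke the globalisation theorem; your explicit bookkeeping via Proposition \ref{pop: equivalence of curvature bounds} is a reasonable elaboration of what the paper leaves implicit.
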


Observe that the only additional assumption, cf.\ \cite[Theorem 1.4]{BORS23}, made in order to replace global curvature bounds with local ones in the above is the presence of a time function, which is necessary in order to apply Theorem \ref{thm: LorentzianToponogov}. 
Since time functions exist on any second countable, globally hyperbolic, Lorentzian length space (see Proposition \ref{pop:exist-time-function}), this condition is relatively mild.

\subsection{Future work}

The assumption in Theorem \ref{thm: LorentzianToponogov} that the space be a globally hyperbolic \LLS is quite a strong one.
In the metric setting, the assumptions are comparatively mild, e.g. \cite{BGP92} and \cite{LS13} manage to show the theorem for complete length spaces. 
The result can even be shown for non-complete geodesic spaces of curvature bounded below \cite{Pet16}. 
It is therefore only natural to ask whether or not the Toponogov Globalisation Theorem holds in the Lorentzian context under milder assumptions as well. 
Given that \cite{BGP92} globalises curvature bounds using a four-point condition, which was recently adapted to the Lorentzian setting \cite[Definition 4.6]{BKR23}, we are optimistic that the answer is positive and a more general result might be obtained. 
Such a generalisation would also extend the applicability of the Bonnet--Myers theorem, for which the assumptions of the Globalisation Theorem are sufficient but may not all be necessary. 
In particular, the additional assumptions under which the Bonnet--Myers theorem holds for global curvature bounds are weaker than the local version, aside from the bounds themselves.

In the metric case, a powerful consequence of the Toponogov Globalisation Theorem is that the Hausdorff dimension of an Alexandrov space is the same at all neighborhoods in the space \cite{BGP92}. 
A similar notion of dimension has been proposed for \LLSs by McCann and S\"amann \cite{MS22} and it is reasonable to expect that Theorem \ref{thm: LorentzianToponogov} can be used to make an analogous statement.

\bibliographystyle{abbrv}
\addcontentsline{toc}{section}{References}
\bibliography{references} 
\end{document}